\RequirePackage{fix-cm}
\documentclass[smallextended]{svjour3}       % onecolumn (second format)
\smartqed  % flush right qed marks, e.g. at end of proof

\usepackage{amsfonts,comment}
\usepackage{amssymb,amsmath,url}
\usepackage{mathrsfs}
\usepackage{hyperref}
\usepackage{graphicx,color,float,epstopdf}
\usepackage{mathrsfs}
\usepackage{pst-blur,pstricks-add}
\usepackage{cases}
\usepackage{algorithm}
\usepackage{algorithmicx}
\usepackage{algpseudocode}
\usepackage{graphics,booktabs,epsfig}
\usepackage[numbers,sort&compress]{natbib}
\usepackage{booktabs}
\usepackage{subcaption}
\usepackage{multirow,marvosym}
\usepackage[numbers,sort&compress]{natbib}

\DeclareMathOperator{\prox}{Prox}
\DeclareMathOperator{\supp}{supp}

\DeclareMathOperator*{\dom}{dom}

\newtheorem{assumption}{Assumption}
\newcommand{\h}[1]{\mathbf{#1}}

\DeclareMathOperator{\sign}{sign}

\newcommand{\x}{\mathbf{x}}
\newcommand{\y}{\mathbf{y}}
\newcommand{\z}{\mathbf{z}}
\newcommand{\g}{\mathbf{g}}

\newcommand{\M}{\mathcal{M}}

\newcommand{\nn}{\nonumber}

\newcommand{\R}{\mathbb{R}}

\def\cB{{\mathscr{B}}}

\begin{document}
	
	\title{Bridging Identification and Second-Order Acceleration: A Fast Alternating Minimization Framework for Composite Optimization}

	\author{Zihao Xia\and  Min Tao}

	\institute{Z. Xia \at School of Mathematics, Nanjing University, Nanjing, 210093, China.\\
		\Letter\ M. Tao \at School of Mathematics, National Key Laboratory for Novel Software Technology, Nanjing University, Nanjing, 210093, Republic of China.
		\href{mailto:taom@nju.edu.cn}{taom@nju.edu.cn}  \\}
	\date{Received: date / Accepted: date}
	
	\maketitle
\begin{abstract}
We consider a class of composite optimization problems involving a smooth function and a proper, lower semicontinuous regularizer, which may be nonconvex and nonsmooth. 
			We propose a novel alternating minimization framework that integrates proximal-gradient steps with cubic-regularized Newton updates restricted to a dynamically identified low-dimensional subspace. 
			Under the Kurdyka--Łojasiewicz (KL) property, we establish global convergence of the proposed method to a stationary point. 
			Moreover, by incorporating an adaptive thresholding strategy guided by the KL exponent, we prove a finite identification property without imposing any nondegeneracy assumptions. 
			We further develop a local convergence analysis and show that the proposed method attains a worst-case iteration complexity of $\mathcal{O}(\varepsilon^{-3/2})$ for achieving approximate second-order stationarity. 
			Numerical experiments on both synthetic and real datasets demonstrate the efficiency  of the proposed framework.
\end{abstract}

\keywords{Nonconvex composite optimization, Subspace  cubic
			regularized Newton, Identification, Proximal gradient, Global convergence}

\subclass{
90C26 \and 65K05\and 90C30}

\section{Introduction}
		In this paper, we consider the following problem:
		\begin{equation}
			\min_{\mathbf{x}\in\mathbb{R}^n} F(\mathbf{x}) = g(\mathbf{x}) + h(\mathbf{x}),\label{Problem}
		\end{equation}
		where $g:\mathbb{R}^{n}\to \overline{\mathbb{R}} := \mathbb{R} \cup \{
			+\infty\}$ is proper, lower semicontinuous, possibly nonconvex and nonsmooth, and \emph{proximal-friendly}, i.e., the proximal mapping $\prox_{\gamma g}$ can be computed efficiently,
		and  $h:\mathbb{R}^{n}\to\mathbb{R}$ is twice continuously differentiable with $L$-Lipschitz continuous gradient for some $L\ge 0$.
		We impose the following assumptions:
		\begin{assumption}\label{Ass1}
			\begin{enumerate}
				\item[(i)] For any $\mathbf{x}\in\mathbb{R}^{n}$, define the manifold
				\[
				\mathcal{M} := \{\mathbf{z}\in\mathbb{R}^{n} : \supp(\mathbf{z}) = \supp(\mathbf{x})\},
				\]
				and let $\mathcal{S} \subseteq \supp(\mathbf{x})$. The restricted Hessian $\nabla^2_{\mathcal{S}} F$ is strictly continuous at $\mathbf{x}$ relative to $\mathcal{M}$ with modulus $\mathrm{lip}_{\mathcal{M}}\nabla^2_{\mathcal{S}}F(\mathbf{x})$.
				
				\item[(ii)] The function $F$ is bounded below, i.e., $\underline{F} := \inf_{\mathbf{x}\in\mathbb{R}^{n}} F(\mathbf{x}) > -\infty$.
			\end{enumerate}
		\end{assumption}
		Composite optimization problems of the form \eqref{Problem} arise ubiquitously in modern applications, including sparse signal recovery, machine learning, image reconstruction, and statistical estimation. In these settings, the nonsmooth term $g$ is typically employed to promote desirable structures such as sparsity, low-rankness, or group sparsity, while the smooth component $h$ represents data fidelity or loss functions derived from observations. These applications often involve large-scale and ill-conditioned data, making it essential to design efficient algorithms that can exploit the composite structure while handling nonconvexity and nonsmoothness.

		%	Assumption \ref{Ass1}(i)  means that the restriction Hessian is locally Lipschitz continuous around $\x$. %It leads to for any $M(\x) > \mathrm{lip}\nabla^2_{\mathcal{S}}F({\x})$, there exists a $\delta >0$ such that
		%	\begin{eqnarray}\label{mk}
			%		\|\nabla^2_{\mathcal{S} }F(\x_1)-\nabla^2_{\mathcal{S} }F(\x_2)\|\le M(\x)\|\x_1-\x_2\|,
			%		\quad \forall \x_1,\x_2\in \cB(\x,\delta)\cap \mathcal{M}_{\mathcal{S}}.
			%	\end{eqnarray}
		%First-order methods have been studied well for solving the composite problems of the form \eqref{Problem}. 
		Proximal gradient-type algorithms and their variants have been extensively studied for solving the composite problems of the form \eqref{Problem} \citep{{beck2017first}}.
		Second-order methods have been recognized as another potential approach  for solving nonconvex optimization problems due to their fast local convergence. 
		The cubic regularized Newton (CRN) method, originally proposed by \citep{nesterov2006cubic}, has attracted significant attention. Under the assumption that the Hessian is globally Lipschitz continuous, CRN achieves the optimal worst-case iteration complexity of order $\mathcal{O}(\varepsilon^{-3/2})$ for finding an approximate second-order stationary point. This optimal complexity has motivated a series of works aimed at improving its practical efficiency.
		\citep{cartis2011adaptive,cartis2011adaptive2} proposed the Adaptive Regularization with Cubics (ARC) framework, which adaptively updates the regularization parameter and approximately minimizes a cubic model at each iteration. By exploiting Krylov subspace techniques, ARC achieves favorable scalability while preserving the optimal $\mathcal{O}(\varepsilon^{-3/2})$ complexity. Subsequent works have further improved the efficiency of solving the cubic subproblem, including accelerated first-order approaches \citep{jiang2021accelerated}, convex reformulations \citep{jiang2022cubic}, and momentum-based variants \citep{wang2020cubic}. In addition, the asymptotic convergence behavior of CRN has been analyzed under the  Kurdyka-\L ojasiewicz (KL) framework \citep{zhou2018convergence}, revealing refined local convergence properties.
		
		However,  classical cubic regularization methods  require that the objective function is twice continuously differentiable on $\mathbb R^n$,
		they cannot be applied to  (\ref{Problem}) directly.  They are always computationally demanding in high-dimensional settings due to requiring full gradients and Hessians. These limitations motivate us to develop subspace second-order methods that restrict to a low-dimensional subspace using locally second-order curvature information without the assumption of the objective function twice continuously differentiable.
		%broadly categorized according to how the subspace is selected.

		\subsection{Related works} \citep{bareilles2023newton} proposed a framework that alternates between proximal gradient steps and 
		Riemannian Newton steps restricted to an identified manifold. Under suitable assumptions, the algorithm identifies the active manifold in finite iteration and subsequently achieves superlinear or quadratic convergence. Similar ideas have been explored for sparse optimization problems with nonconvex regularization. In particular, \citep{wu2023regularized} developed a hybrid proximal gradient and subspace Newton method, proving finite support identification and establishing global convergence under the KL property, along with local superlinear convergence under an error bound condition. More recently, \citep{tao2024partlysmooth} studied $L_1/L_2$ regularization and proposed a two-stage framework that first identifies the active manifold and then applies a semismooth Newton method, achieving superlinear and even quadratic convergence.
		Another line is based on optimality conditions to choose subspace. For example, \citep{chen2017reduced} introduced a subspace Newton method for $L_1$-regularized convex problems using the KKT residual to guide subspace selection.  \citep{wang2024alternating} proposed the IReNA method for nonconvex sparse optimization with separable concave regularizers. Their approach alternates between reweighted $L_1$ steps and subspace Newton updates, and under the nondegeneracy and KL assumptions, they established finite identification and local quadratic convergence.
		More recently, randomized subspace strategies have been proposed to reduce computational costs. Works such as \citep{zhao2024cubic,chen2022accelerating} develop randomized variants of cubic regularization methods, showing that with sufficiently large sample sizes, the optimal $\mathcal{O}(\varepsilon^{-3/2})$ complexity can be achieved with high probability.

		\subsection{Motivation and contributions}
		Although these methods demonstrate that restricting second-order updates to a subspace can significantly improve efficiency, existing approaches typically rely on either strong structural assumptions (such as partly smooth, prox-regularity and nondegeneracy conditions), or heuristic subspace selection rules whose theoretical justification remains incomplete. Moreover, most existing frameworks separate the \emph{identification phase} and the \emph{second-order acceleration phase}, leading to multi-stage. 
		This raises the following fundamental question:

			{\bf{\emph{Can we design a unified algorithmic framework that simultaneously performs subspace identification and second-order acceleration, while enjoying global convergence guarantees?}}}

		\noindent
		%\textbf{Contributions.}
		In this paper, we provide an affirmative answer to this question by proposing a novel subspace cubic Newton-type framework that seamlessly integrates first-order identification and second-order refinement within a unified iterative scheme. The central idea is to dynamically construct an adaptive low-dimensional subspace based on the current iterate, and to perform a cubic-regularized Newton step restricted to this subspace.
		Our main contributions can be summarized as follows:
		\begin{itemize}
			\item We propose a unified alternating minimization framework that integrates proximal-gradient steps with cubic-regularized Newton updates restricted to a low-dimensional subspace. This unified framework bridges identification and second-order acceleration under weak
			structure.
			
			\item We establish global convergence of the proposed method to a first-order stationary point under the Kurdyka--Łojasiewicz (KL) property.
			
			\item We show that the algorithm identifies the active subspace (or manifold) in a finite number of iterations via an adaptive thresholding strategy driven by the KL exponent of the objective function, without any nondegeneracy conditions. Once identification occurs, we establish fast local convergence, characterized by a local iteration complexity of $\mathcal{O}(\varepsilon^{-3/2})$ for computing an $(\varepsilon_g,\varepsilon_H)$-approximate second-order stationary point.
		\end{itemize}
		\vspace{0.5em}
		\noindent
		\textbf{Organization.}
		The remainder of this paper is organized as follows. 
		In Section~\ref{sec:prelim}, we introduce the problem formulation and necessary preliminaries. 
		Section~\ref{sec:algorithm} presents the proposed algorithm and its main properties. 
		Global convergence is established in Section~\ref{sec:global}. 
		In Section~\ref{sec:finite}, we further show that the finite identification property holds for the proposed algorithm without requiring a nondegeneracy condition. 
		Section~\ref{sec:local} is devoted to the local convergence analysis. 
		Numerical experiments are reported in Section~\ref{sec:numerical}, followed by concluding remarks in Section~\ref{sec:conclusion}.
		
		\section{Preliminary}\label{sec:prelim}
		
		For any $r>0$, let $\cB({\widehat{\mathbf{x}}}, r)$ represent the Euclidean ball centered at ${\widehat{\mathbf{x}}}$ with radius $r$, that is, $\cB({\widehat{\mathbf{x}}},r)=\{\mathbf{x}:\|\mathbf{x}-{\widehat{\mathbf{x}}}\|\leq r\}$.
		For a function $f: \mathbb{R}^{n}\rightarrow\mathbb{R}$,  dom$(f) = \{\mathbf{x}: f(\mathbf{x}) < \infty\}$ represents its effective domain. $f$ is proper if and only if dom$(f) \neq \varnothing$ and $f(\mathbf{x}) > -\infty$ for all $\mathbf{x}\in$ dom$(f)$. We say $f$ is lower-semicontinuous if $f(\overline{\mathbf{x}})\le\lim\inf_{{\mathbf{x}}\to \overline{\mathbf{x}}}f(\mathbf{x})$ for $\overline{\mathbf{x}}\in\text{dom}(f)$. The symbol $\sharp(\mathcal{D})$ denotes the cardinality of $\mathcal{D}$. 
		$\lambda_{\min}(\cdot)$ denotes the minimum  eigenvalue. $\mathbb S^{n\times n}$ represents $n\times n$ symmetric matrix.
		%For $\overline{\mathbf{x}}\in\text{dom}(f)$, the set:
%		\begin{equation*}
%			{\widehat\partial} f({\overline{\mathbf{x}}}):=\left\{{\mathbf{v}} \in {\mathbb R}^n :{\liminf \limits_{\mathbf{x}\to{\overline{\mathbf{x}}}\;{\mathbf{x}}\neq {\overline{\mathbf{x}}}}} \frac{f(\mathbf{x})-f(\overline{\mathbf{x}})-\langle {\mathbf{v}},{\mathbf{x}}-{\overline{\mathbf{x}}}\rangle}{\|{\mathbf{x}}-\overline{\mathbf{x}}\|}\ge 0\right\}
%		\end{equation*}
%		is the $Fr\acute{e}chet$ subdifferential of $f$. The limiting subdifferential of $f$ at $\overline{\mathbf{x}}$ is defined as:
%		\begin{equation*}
%			\partial f({\overline{\mathbf{x}}}):=\left\{{\mathbf{v}} \in \mathbb{R}^n \! : \! \exists\; {\mathbf{x}}^k \rightarrow {\overline{\mathbf{x}}}, \ f({\mathbf{x}}^k)\rightarrow f(\overline{\mathbf{x}}), {\mathbf{v}}^k  \rightarrow{\mathbf{v}} \ \text{as} \ k \rightarrow +\infty, \
%			{\mathbf{v}}^k\in{\widehat\partial} f({\mathbf{x}}^k) \right\}.
%		\end{equation*}
		\noindent For a proper, lower-semicontinuous function $f$, the proximal operator with module $\alpha$ is defined as:
		$
		\prox_{\alpha f}(\mathbf{z}) = \mathop{\arg\min}_{\mathbf{x}\in\mathbb{R}^{n}}\left(  f(\mathbf{x}) + \frac{1}{2\alpha}\|\mathbf{x} - \mathbf{z}\|^2\right).
		$
		%	Let $B(\x,r)$ denotes the open ball with center $\x\in\mRn$ and radius $r > 0$. The definition of prox-regular function is then as follows:
		%	\begin{Def}[Prox-Regular]
			%		A function $f: \mathbb{R}^{n}\rightarrow\overline{\mathbb{R}}$ is called prox-regular at $\overline{\x}$ for $\overline{\bv}$ if $f$ is finite and locally lower semicontinuous at $\overline{\x}$, $\overline{\bv}\in\partial f(\overline{\x})$ and there exist $\delta > 0$ and $\rho\ge 0$ such that for all $\x,\y\in B(\x,r)$ and $\bv\in\partial f(\x)$ with $f(\x) \le f(\overline{\x}) + \delta$ and $||\bv - \overline{\bv}||\le\delta$, we have
			%		\begin{equation}
				%			f(\y) \ge f(\x) + \langle \bv, \y - \x\rangle - \frac{\rho}{2}||\y - \x||^2
				%		\end{equation}
			%		The function $f$ is called prox-regular at $\overline{\x}$, if it is prox-regular at $\overline{\x}$ for all $\overline{\bv}\in\partial f(\overline{\x})$.
			%	\end{Def}
		We recall the Kurdyka-\L ojasiewicz (KL) property and the KL exponent  \citep{attouch2009convergence}, and the notion of strict continuity \citep[Definition 9.1]{rockafellar1998variational}.
		\begin{definition} 
			\label{def:KL}
			A proper function $h: \mathbb R^n\rightarrow {\overline{\mathbb R}}$ is said to satisfy the {\it Kurdyka-\L ojasiewicz (KL) property} at a point ${\widehat{\mathbf{x}}}\in {\text{\rm dom }}\partial h$ if there exist a constant $\eta\in(0,+\infty]$, an open neighborhood
			$U$ of ${\widehat{\mathbf{x}}},$ and a concave and continuous function $\phi:\;[0,\eta)\rightarrow[0,+\infty)$ with the properties
			\begin{itemize}
				\item[(i)] $\phi(0)=0$;
				\item[(ii)] $\phi$ is continuously differentiable on $(0,\eta)$ and $\phi'{(\cdot)}>0$ on $(0,\eta)$;
				\item[(iii)] for every $\mathbf{x}\in U$ with $h({\widehat{\mathbf{x}}})< h(\mathbf{x})< h(\widehat{\mathbf{x}}) +\eta$, it holds
				\begin{equation*}
					\phi'(h(\mathbf{x})-h(\widehat{\mathbf{x}})){\text{\rm{dist}}}(\mathbf{0},\partial h({\mathbf{x}}))\ge 1. 
				\end{equation*}
			\end{itemize}
			The class of so-called desingularization functions $\phi:[0,\eta)\rightarrow[0,+\infty)$ fulfilling (i)-(iii) is denoted by $\Phi_\eta$.
		\end{definition}

		\begin{definition}\label{def:KL_exponent}
			Let $f:\mathbb{R}^n\to(-\infty,+\infty]$ be proper and lower semicontinuous, and
			let $\overline \x\in\mathrm{dom}(\partial f)$ be a point satisfying $0\in\partial f(\overline \x)$.
			We say that $f$ satisfies the \emph{KL property at  $\overline \x$ with exponent
				$\theta\in[0,1)$} if there exist constants $\eta>0$, $\varepsilon>0$ and $\mu>0$ such that
			\begin{equation*}\label{eq:KL_basic}
				\mathrm{dist}({{\bf 0}},\partial f(\x))\ge\mu\,(f(\x)-f(\overline \x))^{\theta},
			\end{equation*}
			whenever $\|\x-\overline \x\|<\varepsilon$ and
			$0<f(\x)-f(\overline \x)<\eta$.
			%		If $f$ satisfies \eqref{eq:KL_basic} at every point in $\mathrm{crit}\,f$, we say that
			%		$f$ has the KL property.
		\end{definition}
		
		\begin{definition}\label{def:Lipschitz}
			Let $F$ be a single-valued mapping defined on a set $D \subseteq \mathbb{R}^{n}$, with values in $\mathbb{R}^{m \times s}$. Let $X \subseteq D$. We say that $F$ is {\it strictly continuous} at $\overline{\x}$ relative to $X$ if $\overline{\x} \in X$ and the value
			\begin{equation*}
				\mathrm{lip}_{X} F(\overline{\x}) := \limsup_{\substack{\x,\x' \to \overline{\x} \\ \x,\x' \in X,\, \x \neq \x'}} 
				\frac{\|F(\x') - F(\x)\|}{\|\x - \x'\|}
			\end{equation*}
			is finite. In this case, $\mathrm{lip}_{X} F(\overline{\x})$ is called the Lipschitz modulus of $F$ at $\overline{\x}$ relative to $X$.
		\end{definition}
		
		Throughout the paper, we denote by $\mathrm{lip}_{\mathcal{M}} \nabla^2 F(\x)$ the Lipschitz modulus of $\nabla^2 F$ at $\x$ relative to the set
		$
		\mathcal{M}$.
		When there is no ambiguity, we abbreviate it as $\mathrm{lip}\,\nabla^2 F(\x)$.
		At the end of this section, we provide the closed-form characterization of the global minimizers of the cubic Newton subproblem, as shown in~\citep[Corollary~8.3.1]{cartis2022evaluation}, and present the second-order necessary conditions; see~\citep[Proposition~1]{nesterov2006cubic}.

		\begin{proposition}\label{thm:cubic_solution}
			 Let  $\widehat H\in\mathbb S^{n\times n}$ and $\mathbf{d}^*$ be any global solution of the following cubic-regularized Newton problem:
			\begin{equation}
				\label{prob:cubic_newton_sub}
				\mathbf{d}^*\in
				\mathop{\arg\min}_{\mathbf{d}\in\mathbb{R}^{n}}
				\left\{
				\langle \g, \mathbf{d}\rangle
				+
				\frac{1}{2}\langle \widehat H \mathbf{d},\mathbf{d} \rangle
				+
				\frac{\sigma}{6}\|\mathbf{d}\|^3
				\right\}.
			\end{equation}
			Let $\lambda_1$ denote the smallest eigenvalue of $\widehat H$
			and  $\mathbf{v}_1$ be any eigenvector associated with $\lambda_1$.
			Then for any global minimizer $\mathbf{d}^*$, there exists a scalar $\lambda^\star$ such that $\lambda^\star \ge -\lambda_1$ and
			\begin{equation*}\label{eq:cubic-sol}
				(\widehat H+\lambda^\star I)\mathbf{d}^*=-\mathbf{g},
				\qquad
				\lambda^\star=\frac{\sigma}{2}\|\mathbf{d}^*\|.
			\end{equation*}
			Moreover,
			\[
			\mathbf{d}^*=
			\begin{cases}
				-(\widehat H+\lambda^\star I)^{-1}\mathbf{g},
				& \text{if }\lambda^\star>-\lambda_1,\\[4pt]
				-(\widehat H+\lambda^\star I)^{\dagger}\mathbf{g}+\alpha\,\mathbf{v}_1,
				& \text{if }\lambda^\star=-\lambda_1,
			\end{cases}
			\]
			and $\alpha\in\mathbb{R}$ is one of the two roots of
			$
			\bigl\|
			-(\widehat H+\lambda^\star I)^{\dagger}\mathbf{g}+\alpha\,\mathbf{v}_1
			\bigr\|
			=
			\displaystyle{\frac{\lambda^\star}{\sigma/2}},
			$ and $(\cdot)^{\dagger}$ represents the generalized inverse.
		\end{proposition}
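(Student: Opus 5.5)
The plan is to derive the characterization from first- and second-order necessary optimality conditions of the cubic model $m(\mathbf{d}) := \langle \g,\mathbf{d}\rangle + \frac12\langle \widehat H\mathbf{d},\mathbf{d}\rangle + \frac{\sigma}{6}\|\mathbf{d}\|^3$, in the spirit of \citep[Proposition~1]{nesterov2006cubic}. The approach has two stages: first establish the secular equation together with the bound $\lambda^\star\ge-\lambda_1$, and then read off the explicit form of $\mathbf{d}^*$ by splitting into cases according to whether $\widehat H+\lambda^\star I$ is singular.

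\emph{Stationarity.} The function $m$ is $C^1$, and the gradient of $\frac{\sigma}{6}\|\mathbf{d}\|^3$ is $\frac{\sigma}{2}\|\mathbf{d}\|\mathbf{d}$. Hence any global minimizer satisfies $\g+\widehat H\mathbf{d}^*+\frac{\sigma}{2}\|\mathbf{d}^*\|\mathbf{d}^*=0$. Setting $\lambda^\star:=\frac{\sigma}{2}\|\mathbf{d}^*\|$ gives $(\widehat H+\lambda^\star I)\mathbf{d}^*=-\g$ immediately.

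\emph{The bound $\lambda^\star\ge-\lambda_1$.} This is the main obstacle, because the naive second-order condition $\nabla^2 m(\mathbf{d}^*)\succeq 0$ only yields $\widehat H+\lambda^\star I+\frac{\sigma}{2}\frac{\mathbf{d}^*\mathbf{d}^{*T}}{\|\mathbf{d}^*\|}\succeq0$. The rank-one term prevents us from concluding directly. I would instead follow Nesterov--Polyak and use global optimality.

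For any $\mathbf{y}$ with $\|\mathbf{y}\|=\|\mathbf{d}^*\|$, substitute $\g=-(\widehat H+\lambda^\star I)\mathbf{d}^*$. The cubic terms cancel, and a direct expansion gives
\[
m(\mathbf{y})-m(\mathbf{d}^*)=\tfrac12\langle(\widehat H+\lambda^\star I)(\mathbf{y}-\mathbf{d}^*),\mathbf{y}-\mathbf{d}^*\rangle\ge0 .
\]
The vectors $\mathbf{y}-\mathbf{d}^*$ with $\|\mathbf{y}\|=\|\mathbf{d}^*\|$ span, up to scaling, every direction $\mathbf{w}$ with $\langle \mathbf{w},\mathbf{d}^*\rangle\neq0$. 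This set is dense, so continuity gives $\widehat H+\lambda^\star I\succeq0$, i.e., $\lambda^\star\ge-\lambda_1$.

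The degenerate case $\mathbf{d}^*=0$ must be handled separately. Then $\g=0$ and $\lambda^\star=0$, so we need $\widehat H\succeq0$. If instead $\lambda_1<0$, then $m(t\mathbf{v}_1)=\frac{t^2}{2}\lambda_1+O(t^3)<0=m(0)$ for small $t$, contradicting global optimality.

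\emph{Explicit form of $\mathbf{d}^*$.} If $\lambda^\star>-\lambda_1$, the matrix $\widehat H+\lambda^\star I$ is positive definite, so $\mathbf{d}^*=-(\widehat H+\lambda^\star I)^{-1}\g$.

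If $\lambda^\star=-\lambda_1$, the matrix is PSD and singular. The relation $(\widehat H+\lambda^\star I)\mathbf{d}^*=-\g$ shows that $\g$ lies in its range, so the general solution is $-(\widehat H+\lambda^\star I)^{\dagger}\g$ plus an element of the kernel, which is the eigenspace of $\lambda_1$. For a simple $\lambda_1$ this element has the form $\alpha\mathbf{v}_1$. For a multiple $\lambda_1$, choose $\mathbf{v}_1$ as the normalized kernel component of $\mathbf{d}^*$; the statement permits any eigenvector, so this choice is allowed.

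Finally, because the pseudoinverse term is orthogonal to the kernel, the constraint $\|\mathbf{d}^*\|=2\lambda^\star/\sigma$ becomes the quadratic $\|(\widehat H+\lambda^\star I)^{\dagger}\g\|^2+\alpha^2\|\mathbf{v}_1\|^2=(2\lambda^\star/\sigma)^2$. Thus $\alpha$ is one of its two roots, which completes the characterization.
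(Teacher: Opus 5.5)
Your argument is correct. The paper does not prove this proposition itself: it quotes the characterization from \citep[Corollary~8.3.1]{cartis2022evaluation} and, separately, the curvature condition of Proposition~\ref{secondorder} from \citep[Proposition~1]{nesterov2006cubic}. Your derivation is a self-contained version of the Nesterov--Polyak comparison on the sphere $\|\mathbf{y}\|=\|\mathbf{d}^*\|$. It therefore delivers both facts at once, since $\lambda^\star\ge-\lambda_1$ is exactly $\widehat H+\frac{\sigma}{2}\|\mathbf{d}^*\|I\succeq 0$. The reflection/density step is sound, and so is the separate treatment of $\mathbf{d}^*=0$, where the sphere collapses to a point. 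What the citation route buys is brevity and, in the usual formulation, the converse: any $\mathbf{d}$ with $(\widehat H+\lambda I)\mathbf{d}=-\mathbf{g}$, $\lambda=\frac{\sigma}{2}\|\mathbf{d}\|$ and $\widehat H+\lambda I\succeq 0$ is a global minimizer. That direction needs the identity off the sphere,
\[
m(\mathbf{y})-m(\mathbf{d}^*)=\tfrac12\bigl\langle(\widehat H+\lambda^\star I)(\mathbf{y}-\mathbf{d}^*),\mathbf{y}-\mathbf{d}^*\bigr\rangle+\tfrac{\sigma}{12}\bigl(\|\mathbf{y}\|-\|\mathbf{d}^*\|\bigr)^2\bigl(2\|\mathbf{y}\|+\|\mathbf{d}^*\|\bigr),
\]
but the proposition only asserts necessity, so restricting to the sphere suffices. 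Your remark on multiplicity is also more than a convenience. If $\lambda^\star=-\lambda_1$ with $\lambda_1$ multiple, your sphere identity shows that every $-(\widehat H+\lambda^\star I)^{\dagger}\mathbf{g}+\mathbf{z}$, with $\mathbf{z}$ in the $\lambda_1$-eigenspace and of the right norm, is a global minimizer. So for an eigenvector $\mathbf{v}_1$ fixed in advance the displayed form can fail, and your existential reading is the one under which the statement is true. One small case to add: when the kernel component of $\mathbf{d}^*$ vanishes you cannot normalize it, but then any $\mathbf{v}_1$ works with $\alpha=0$.
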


		\begin{proposition}
			\label{secondorder}
			Let $\widehat H$ be defined in Proposition \ref{thm:cubic_solution} and $\mathbf{d}^*$ be any solution of (\ref{prob:cubic_newton_sub}),
			then
			$
			\displaystyle{\widehat H + \frac{\sigma}{2}\|\mathbf{d}^*\| I \succcurlyeq 0}.$
		\end{proposition}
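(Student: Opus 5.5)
The plan is to read the claim off Proposition~\ref{thm:cubic_solution}, which already characterizes every global minimizer $\mathbf{d}^*$ of \eqref{prob:cubic_newton_sub}. That result gives a scalar $\lambda^\star$ with
\[
\lambda^\star \ge -\lambda_1,\qquad \lambda^\star=\frac{\sigma}{2}\|\mathbf{d}^*\|,
\]
where $\lambda_1=\lambda_{\min}(\widehat H)$. Eliminating $\lambda^\star$ gives $\frac{\sigma}{2}\|\mathbf{d}^*\|\ge-\lambda_1$. Hence every eigenvalue of $\widehat H+\frac{\sigma}{2}\|\mathbf{d}^*\|I$ is at least $\lambda_1+\frac{\sigma}{2}\|\mathbf{d}^*\|\ge 0$, which is exactly the asserted semidefiniteness. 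This is a two-line deduction, so essentially all the work sits in Proposition~\ref{thm:cubic_solution} itself.

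If a self-contained argument is wanted instead (the route of \citep[Proposition~1]{nesterov2006cubic}), I would proceed as follows. Write $m(\mathbf{d})=\langle\g,\mathbf{d}\rangle+\frac12\langle\widehat H\mathbf{d},\mathbf{d}\rangle+\frac{\sigma}{6}\|\mathbf{d}\|^3$.

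\textbf{Step 1.} The first-order condition $\nabla m(\mathbf{d}^*)=0$ reads $\g+\widehat H\mathbf{d}^*+\frac{\sigma}{2}\|\mathbf{d}^*\|\mathbf{d}^*=0$. Set $r=\|\mathbf{d}^*\|$.

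\textbf{Step 2.} For any $\mathbf{y}$ with $\|\mathbf{y}\|=r$, expand $m(\mathbf{y})-m(\mathbf{d}^*)$. The cubic terms cancel because the norms agree. Substituting $\g=-\widehat H\mathbf{d}^*-\frac{\sigma r}{2}\mathbf{d}^*$ and using $\|\mathbf{y}\|^2=\|\mathbf{d}^*\|^2$ collapses the difference to
\[
m(\mathbf{y})-m(\mathbf{d}^*)=\tfrac12\bigl\langle(\widehat H+\tfrac{\sigma r}{2}I)(\mathbf{y}-\mathbf{d}^*),\,\mathbf{y}-\mathbf{d}^*\bigr\rangle.
\]
By global optimality this quantity is nonnegative.

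\textbf{Step 3.} The vectors $\mathbf{y}-\mathbf{d}^*$ with $\|\mathbf{y}\|=r$ span all directions. Concretely, for any unit $\mathbf{u}$ not orthogonal to $\mathbf{d}^*$, taking $\mathbf{y}$ to be the reflection of $\mathbf{d}^*$ across $\mathbf{u}^\perp$ makes $\mathbf{y}-\mathbf{d}^*$ a nonzero multiple of $\mathbf{u}$. Such $\mathbf{u}$ are dense, so continuity of the quadratic form yields positive semidefiniteness.

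\textbf{Step 4.} The case $\mathbf{d}^*=0$ must be handled separately. Then $\g=0$ and $m(t\mathbf{v}_1)=\frac{t^2}{2}\lambda_1+\frac{\sigma}{6}|t|^3\ge 0$ for small $t$, which forces $\lambda_1\ge0$.

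The only mildly delicate step is Step~3, the density/reflection argument. Given Proposition~\ref{thm:cubic_solution}, however, the first paragraph already suffices.
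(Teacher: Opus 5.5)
Your proposal is correct. The paper gives no argument of its own for this statement: it records it as a known second-order necessary condition and points to \citep[Proposition~1]{nesterov2006cubic}. You take a different route, and it is more economical inside the paper. Proposition~\ref{thm:cubic_solution} already asserts $\lambda^\star\ge-\lambda_1$ with $\lambda^\star=\frac{\sigma}{2}\|\mathbf{d}^*\|$, so the present statement just restates that inequality, and no source beyond the one already used for Proposition~\ref{thm:cubic_solution} is needed.

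Your self-contained alternative is also sound. On the sphere $\|\mathbf{y}\|=r$ the cubic terms cancel, and $\langle\mathbf{d}^*,\mathbf{d}^*-\mathbf{y}\rangle=\frac12\|\mathbf{y}-\mathbf{d}^*\|^2$. This gives exactly $m(\mathbf{y})-m(\mathbf{d}^*)=\frac12\langle(\widehat H+\frac{\sigma r}{2}I)(\mathbf{y}-\mathbf{d}^*),\mathbf{y}-\mathbf{d}^*\rangle$. The reflection $\mathbf{y}=\mathbf{d}^*-2\langle\mathbf{d}^*,\mathbf{u}\rangle\mathbf{u}$ then covers a dense set of unit directions, and your treatment of $\mathbf{d}^*=0$ via $t\mathbf{v}_1$ is correct. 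This sphere-comparison argument of trust-region type does not depend on either external reference.

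It also makes clear what is actually used: stationarity, plus optimality of $\mathbf{d}^*$ against vectors of the same norm (and local optimality at $0$ in the degenerate case). This explains why the conclusion requires global rather than merely local minimality of the cubic model. The deduction from Proposition~\ref{thm:cubic_solution} is the shortest valid justification given the paper's stated preliminaries. The self-contained version is what you would need if you did not want to rely on the cited characterization.
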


		\section{A Framework Bridging Identification and Second-Order Acceleration}\label{sec:algorithm}
		\begin{algorithm}[!htpb]
			\caption{Alternating Minimization via Subspace Cubic Newton-Proximal Gradient (SCN-PG)}
			\label{alg:apgcn}
			\begin{algorithmic}[1]
				\Require{$\mathbf{y}^{0}\in\R^{n}$,  $0<\alpha<1/L$, parameters $c>0$, $\eta>0$, $0<q<1$, $\omega_k\geq\underline{\omega}>1$,  $\varepsilon_k> 0$,  {\tt MaxIt}.}
				
				\For{$k=1,2,\ldots,{\texttt{MaxIt}}$}
				
				\State \textbf{(Proximal--gradient step)}
				%\begin{equation*}
				%	\label{PGstep}
				$\mathbf{x}^{k}\in\prox_{\alpha g}
					\bigl(\mathbf{y}^{k-1}-\alpha\nabla h(\mathbf{y}^{k-1})\bigr).$
				%\end{equation*}
				\State \textbf{($\varepsilon$-Active subspace identification)}
				%\begin{equation*}
					%\label{eq:subspace_def}
					$\mathcal S^{k}:=\{\,i:\ |x_i^{k}|\ge \varepsilon_k\,\}, \; n_k:=\sharp(\mathcal{S}^{k}).$
				%\end{equation*}
				\If{$\mathcal{S}^k \neq\emptyset$}
				\State Let 
				%\begin{equation*}\label{gk} 
					$\g^{k}=\nabla_{\mathcal S^{k}}F(\mathbf{x}^{k}),\;
					H^{k}=\nabla^{2}_{\mathcal S^{k}}F(\mathbf{x}^{k}),$
				%	\end{equation*}
				%\State and 
				$M(\x^k) \geq \omega_k\mathrm{lip}\nabla^2_{\mathcal{S}^k}F({\x^k}),\; \sigma_k\ge \frac{2(M(\mathbf{x}^k)+c)}{3}.$
				\State \textbf{(Subspace cubic--regularized Newton step)}
				\begin{equation}\label{subd}\mathbf{d}^{k}\in\mathop{\arg\min}_{\mathbf{d}\in\R^{n_k}}
					\left\{
					\langle \g^{k},\mathbf{d}\rangle
					+\frac12\langle (H^{k}+\eta I)\mathbf{d},\mathbf{d}\rangle
					+\frac{\sigma_k}{6}\|\mathbf{d}\|^{3}
					\right\}.
				\end{equation}			
				\State \textbf{(Backtracking line search)}
				Set $\beta_{k,0}=1$.
				\For{$j_k=0,1,2,\ldots$}
				\State Define $
				\mathbf{y}^{k,j_k}_{\mathcal S^{k}}=\mathbf{x}^{k}_{\mathcal S^{k}}+\beta_{k,j_k}\mathbf{d}^{k},
				\;
				\mathbf{y}^{k,j_k}_{(\mathcal S^{k})^{c}}=\mathbf{x}^{k}_{(\mathcal S^{k})^{c}}.$
				\If{$
					F(\mathbf{y}^{k,j_k})
					\le
					F(\mathbf{x}^{k})
					-\frac{\eta}{4}\|\mathbf{y}^{k,j_k}-\mathbf{x}^{k}\|^{2}
					-\frac{c}{6}\|\mathbf{y}^{k,j_k}-\mathbf{x}^{k}\|^{3}
					$}
				\State Set $\mathbf{y}^{k}=\mathbf{y}^{k,j_k},\beta_k = \beta_{k,j_k}$ and \textbf{break}.
				\Else
				\State $\beta_{k,j_k+1}=q\,\beta_{k,j_k}$.
				\EndIf
				\EndFor
				\Else
				\State{Set $\y^k = \x^k$.}
				\EndIf
				
				\If{termination criterion satisfied}
				\State {break}
				\EndIf
				
				\EndFor
				
				\State {Output:} $\mathbf{y}^{k}$.
			\end{algorithmic}
		\end{algorithm}
		
		In this section, we propose an alternating minimization framework that combines proximal-gradient updates with subspace cubic-regularized Newton refinement. We refer to this method as the \emph{Subspace Cubic Newton--Proximal Gradient} (SCN-PG) method; see Algorithm~\ref{alg:apgcn}.
		At each iteration, a proximal-gradient step generates a candidate point $\mathbf{x}^k$. Although a constant stepsize is adopted for simplicity, it can be replaced by a backtracking strategy to adaptively select a suitable stepsize.
		Based on $\mathbf{x}^k$, an $\varepsilon_k$-active subspace $\mathcal S^k$ is identified, consisting of indices whose magnitudes exceed a prescribed threshold. The choice of $\varepsilon_k$ will be specified in Section~\ref{sec:finite}. This step aims to identify a low-dimensional manifold on which the objective function exhibits improved smoothness properties.
		Restricted to the subspace indexed by $\mathcal S^k$, a cubic-regularized Newton step is computed by approximately minimizing a local cubic model constructed from the restricted gradient $\g^k$ and restricted Hessian $H^k$. The cubic regularization parameter $\sigma_k$ ensures global stability and well-posedness of the subproblem, while an additional parameter $\eta>0$ provides further control on the step.
		A backtracking line search is then performed along the direction $\mathbf{d}^k$ defined in~\eqref{subd}, restricted to the subspace $\mathcal S^k$, to guarantee sufficient decrease of the objective function. The acceptance criterion
		%\[
%		F(\mathbf{y}^{k,j_k})
%		\le
%		F(\mathbf{x}^{k})
%		-\frac{\eta}{4}\|\mathbf{y}^{k,j_k}-\mathbf{x}^{k}\|^{2}
%		-\frac{c}{6}\|\mathbf{y}^{k,j_k}-\mathbf{x}^{k}\|^{3}
%		\]
		incorporates both quadratic and cubic decrease terms, which play a key role in establishing global convergence. If the identified subspace is empty, the method reduces to a standard proximal-gradient step.
		
		Overall, the proposed algorithm integrates active subspace identification with second-order modeling, enabling efficient exploitation of local smoothness while preserving global convergence guarantees through proximal-gradient updates and cubic regularization.
		%At each iteration, a constant stepsize \(\alpha \in (0,1/L)\) is employed in the proximal-gradient step. Although a backtracking strategy can be used to eliminate the dependence on the Lipschitz constant \(L\), we adopt a constant stepsize for simplicity, and omit the extension to adaptive backtracking rules.
		
		%The subspace identification strategy is defined by selecting the index set
		%	\begin{equation}\label{eq:subspace_def}
			%		\mathcal{S}^{k}
			%		=
			%		\left\{\, i : |{\mathbf{x}}_i^{k}|\ge \varepsilon_{k} \,\right\},
			%		\qquad
			%		n_{k} = \sharp(\mathcal{S}^{k}).
			%	\end{equation}
		%	The restricted gradient $\mathbf{g}^{k}$ and the restricted Hessian $H^{k}$  are defined in (\ref{gk}).
		%\[
		%	\mathbf{g}^{k} = \nabla_{\mathcal{S}^{k}} F(\mathbf{x}^{k}),
		%	\qquad
		%	H^{k} = \nabla_{\mathcal{S}^{k}}^{2} F(\mathbf{x}^{k}).
		%	\]
		
		%For getting an estimation of the parameter of \(M(\mathbf{x}^k)\), we have several strategies:

		%Below, we establish the convergence and complexity analysis of SCN-PG. The overall idea is as follows: we first use the sequence $\{\mathbf{x}_k\}$ to guarantee the global convergence of the algorithm, and then analyze how the sequence $\{\mathbf{y}_k\}$ contributes to the acceleration effect.
		
		\section{Convergence Analysis}\label{sec:global}
		In this section, we establish the global convergence result of Algorithm~\ref{alg:apgcn}.
		\subsection{Descent property}
		
		First, we provide a variant of \citep[Lemma~1]{nesterov2006cubic}.
		
		\begin{lemma}\label{lem:inexact_cubic_upper}
			%Suppose that $F$ defined in \eqref{Problem} satisfies Assumption~\ref{Ass1}.
			Let $\x,\ \mathbf{d}\in\R^n$ and  $\M:=\{\z\in\R^{n}:\supp(\z)=\supp(\x) \}$, and $\supp(\mathbf{d})\subseteq \supp(\mathbf{x})$ satisfies $ \supp(\mathbf{x} + t\mathbf{d}) = \supp(\x)$ for all $t\in[0,1]$.
			Let  $\mathcal{S}$ be an index set satisfying $\supp(\mathbf{d})\subseteq\mathcal{S}\subseteq \supp(\mathbf{x})$, and there exist constants $M\ge 0$ and $a\ge 0$ such that for all  $t\in[0,1]$, the function $F$ satisfies
			\begin{equation}\label{eq:hess_var}
				\bigl\|\nabla_{\mathcal{S}}^2 F(\x+t\mathbf{d})-\nabla_{\mathcal{S}}^2 F(\x)\bigr\|
				\le M\,t\|\mathbf{d}\|+a.
			\end{equation}
			Then, for all $t\in[0,1]$, it holds that
			\begin{equation}\label{ineq:grad_var_bound}
				\bigl\|\nabla_{\mathcal{S}}F(\x+t\mathbf{d})-\nabla_{\mathcal{S}}F(\x)
				- t\,\nabla_{\mathcal{S}}^2F(\x)\mathbf{d}_{\mathcal{S}}\bigr\|
				\le a\,t\|\mathbf{d}\|+\frac{M}{2}\,t^2\|\mathbf{d}\|^2 .
			\end{equation}
			
			\noindent Moreover,
			\begin{equation}\label{ineq:cubic_upper}
				F(\x+\mathbf{d})\le F(\x)
				+\langle \nabla_{\mathcal{S}}F(\x),\mathbf{d}_{\mathcal{S}}\rangle
				+\frac12\langle \nabla_{\mathcal{S}}^2F(\x)\mathbf{d}_{\mathcal{S}},\mathbf{d}_{\mathcal{S}}\rangle
				+\frac{a}{2}\|\mathbf{d}\|^2+\frac{M}{6}\|\mathbf{d}\|^3.
			\end{equation}
		\end{lemma}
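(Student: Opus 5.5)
The plan is to follow the proof of \citep[Lemma~1]{nesterov2006cubic}, replacing the full gradient and Hessian by their $\mathcal{S}$-restricted versions and carrying the additive defect $a$ along.

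\emph{Smoothness along the segment.} Since $\supp(\x+t\mathbf{d})=\supp(\x)$ for all $t\in[0,1]$, the segment lies in $\M$. We interpret $F$ as being $C^2$ in the $\mathcal{S}$-coordinates along this segment, with derivatives $\nabla_{\mathcal S}F$ and $\nabla^2_{\mathcal S}F$. This is consistent with the standing setting: $h$ is $C^2$, and on $\M$ the regularizer $g$ is smooth in the nonzero coordinates, which is exactly what \eqref{eq:hess_var} presupposes. Because $\supp(\mathbf{d})\subseteq\mathcal{S}$, the curve $t\mapsto \x+t\mathbf{d}$ moves only in the $\mathcal S$-coordinates. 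Hence $\varphi(t):=F(\x+t\mathbf{d})$ satisfies
\[
\varphi'(t)=\langle\nabla_{\mathcal S}F(\x+t\mathbf{d}),\mathbf{d}_{\mathcal S}\rangle,
\qquad
\frac{d}{dt}\nabla_{\mathcal S}F(\x+t\mathbf{d})=\nabla^2_{\mathcal S}F(\x+t\mathbf{d})\mathbf{d}_{\mathcal S}.
\]
Note also that $\|\mathbf{d}_{\mathcal S}\|=\|\mathbf{d}\|$.

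\emph{Gradient bound \eqref{ineq:grad_var_bound}.} By the fundamental theorem of calculus,
\[
\nabla_{\mathcal S}F(\x+t\mathbf{d})-\nabla_{\mathcal S}F(\x)-t\nabla^2_{\mathcal S}F(\x)\mathbf{d}_{\mathcal S}
=\int_0^t\bigl[\nabla^2_{\mathcal S}F(\x+s\mathbf{d})-\nabla^2_{\mathcal S}F(\x)\bigr]\mathbf{d}_{\mathcal S}\,ds.
\]
Apply \eqref{eq:hess_var} at each $s\in[0,t]$, which gives an integrand bound of $(Ms\|\mathbf{d}\|+a)\|\mathbf{d}\|$. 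Integrating from $0$ to $t$ yields $a t\|\mathbf{d}\|+\frac{M}{2}t^2\|\mathbf{d}\|^2$, which is \eqref{ineq:grad_var_bound}.

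\emph{Function bound \eqref{ineq:cubic_upper}.} Write
\[
F(\x+\mathbf{d})-F(\x)=\int_0^1\langle\nabla_{\mathcal S}F(\x+t\mathbf{d}),\mathbf{d}_{\mathcal S}\rangle\,dt.
\]
Then add and subtract $\nabla_{\mathcal S}F(\x)+t\nabla^2_{\mathcal S}F(\x)\mathbf{d}_{\mathcal S}$ inside the inner product. The main terms integrate to
\[
\langle\nabla_{\mathcal S}F(\x),\mathbf{d}_{\mathcal S}\rangle+\tfrac12\langle\nabla^2_{\mathcal S}F(\x)\mathbf{d}_{\mathcal S},\mathbf{d}_{\mathcal S}\rangle.
\]
For the remainder, use Cauchy--Schwarz together with \eqref{ineq:grad_var_bound}. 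This bounds it by
\[
\int_0^1\bigl(at\|\mathbf{d}\|^2+\tfrac M2t^2\|\mathbf{d}\|^3\bigr)\,dt=\tfrac a2\|\mathbf{d}\|^2+\tfrac M6\|\mathbf{d}\|^3.
\]

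\emph{Main obstacle.} The only delicate point is the first step: justifying the differentiability of $F$ (not just $h$) along the segment and the chain rule in restricted coordinates. The support-preservation hypothesis is precisely what keeps the segment inside $\M$, where the restricted derivatives are defined. I would state this explicitly as the use of that hypothesis and of Assumption~\ref{Ass1}(i). Everything else is routine integration.
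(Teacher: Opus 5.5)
Your proposal is correct and is essentially the paper's proof. The paper likewise applies the fundamental theorem of calculus to $\psi(t)=\nabla_{\mathcal S}F(\x+t\mathbf{d})$ to get \eqref{ineq:grad_var_bound}, then to $\phi(t)=F(\x+t\mathbf{d})$, splitting off $t\nabla^2_{\mathcal S}F(\x)\mathbf{d}_{\mathcal S}$ and bounding the remainder by Cauchy--Schwarz and \eqref{ineq:grad_var_bound}; it also treats differentiability of $F$ along the segment in the $\mathcal S$-coordinates as presupposed by \eqref{eq:hess_var} rather than deriving it, so your handling of that point matches the paper's level of rigor.
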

		
		\begin{proof}
			For $t\in[0,1]$ and define
			$
			\psi(t):=\nabla_{\mathcal{S}}F(\x+t\mathbf{d})\in\R^{\sharp(\mathcal{S})}.
			$
			Since $\supp(\x+t\mathbf{d})=\supp(\x)$ for all $t\in[0,1]$,  $\supp(\mathbf{d})\subseteq\mathcal{S}\subseteq \supp(\mathbf{x})$
			and (\ref{eq:hess_var}) holds for $t\in[0,1]$,
			$\psi$ is differentiable on $[0,1]$, with
			\[
			\psi'(t)=\nabla_{\mathcal{S}}^{2}F(\x+t\mathbf{d})\,\mathbf{d}_{\mathcal{S}}, \qquad t\in[0,1].
			\]
			Then, we obtain
			\begin{equation*}
				\psi(t)-\psi(0)-t\psi'(0)
				=\int_{0}^{t}\bigl(\psi'(u)-\psi'(0)\bigr)\,du 
				=\int_{0}^{t}\bigl(\nabla_{\mathcal{S}}^{2}F(\x+u\mathbf{d})-\nabla_{\mathcal{S}}^{2}F(\x)\bigr)
				\,\mathbf{d}_{\mathcal{S}}\,du .
			\end{equation*}
			Taking norms and using $\|\mathbf{d}_{\mathcal{S}}\| = \|\mathbf{d}\|$, together with
			\eqref{eq:hess_var}, yields
			\begin{equation*}
					\begin{aligned}
					\bigl\|\psi(t)-\psi(0)-t\psi'(0)\bigr\|
					&\le \int_{0}^{t}\bigl\|\nabla_{\mathcal{S}}^{2}F(\x+u\mathbf{d})
					-\nabla_{\mathcal{S}}^{2}F(\x)\bigr\|\,\|\mathbf{d}\|\,du \le \int_{0}^{t}\bigl(Mu\|\mathbf{d}\|+a\bigr)\|\mathbf{d}\|\,du \\
					&= a\,t\|\mathbf{d}\|+\frac{M}{2}t^{2}\|\mathbf{d}\|^{2},
				\end{aligned}
			\end{equation*}
			which proves \eqref{ineq:grad_var_bound}.
			Next, define $\phi(t)=F(\x+t\mathbf{d})$. Then, $\phi$ is
			differentiable on $[0,1]$ and satisfies
			$
			\phi'(t)=\langle \nabla_{\mathcal{S}}F(\x+t\mathbf{d}),\mathbf{d}_{\mathcal{S}}\rangle.
			$
			Consequently,
			\begin{equation*}
			\begin{aligned}
				&F(\x+\mathbf{d})-F(\x)-\langle \nabla_{\mathcal{S}}F(\x),\mathbf{d}_{\mathcal{S}}\rangle = \phi(1) - \phi(0) - \phi'(0)\\
				& = \int_{0}^{1}(\phi'(t) - \phi'(0))\,dt=\int_{0}^{1}\bigl\langle \nabla_{\mathcal{S}}F(\x+t\mathbf{d})
				-\nabla_{\mathcal{S}}F(\x),\mathbf{d}_{\mathcal{S}}\bigr\rangle\,dt \\
				&=\int_{0}^{1}\Bigl\langle t\nabla_{\mathcal{S}}^{2}F(\x)\mathbf{d}_{\mathcal{S}},
				\mathbf{d}_{\mathcal{S}}\Bigr\rangle\,dt
				+\int_{0}^{1}\Bigl\langle \nabla_{\mathcal{S}}F(\x+t\mathbf{d})
				-\nabla_{\mathcal{S}}F(\x)
				-t\nabla_{\mathcal{S}}^{2}F(\x)\mathbf{d}_{\mathcal{S}},\mathbf{d}_{\mathcal{S}}\Bigr\rangle\,dt \\
				&\le\frac 12\bigl\langle \nabla_{\mathcal{S}}^{2}F(\x)\mathbf{d}_{\mathcal{S}},\mathbf{d}_{\mathcal{S}}\rangle+\int_{0}^{1}\Bigl|\Bigl\langle \nabla_{\mathcal{S}}F(\x+t\mathbf{d})
				-\nabla_{\mathcal{S}}F(\x)
				-t\nabla_{\mathcal{S}}^{2}F(\x)\mathbf{d}_{\mathcal{S}},\mathbf{d}_{\mathcal{S}}\Bigr\rangle\Bigr|\,dt \\
				&\le\frac12\bigl\langle \nabla_{\mathcal{S}}^{2}F(\x)\mathbf{d}_{\mathcal{S}},\mathbf{d}_{\mathcal{S}}\rangle+ \int_{0}^{1}\bigl\|\nabla_{\mathcal{S}}F(\x+t\mathbf{d})
				-\nabla_{\mathcal{S}}F(\x)
				-t\nabla_{\mathcal{S}}^{2}F(\x)\mathbf{d}_{\mathcal{S}}\bigr\|\,\|\mathbf{d}_{\mathcal{S}}\|\,dt \\
				&\le \frac12\bigl\langle \nabla_{\mathcal{S}}^{2}F(\x)\mathbf{d}_{\mathcal{S}},\mathbf{d}_{\mathcal{S}}\rangle+\int_{0}^{1}\Bigl(a\,t\|\mathbf{d}\|
				+\frac{M}{2}t^{2}\|\mathbf{d}\|^{2}\Bigr)\|\mathbf{d}\|\,dt\\
				&=\frac12\bigl\langle \nabla_{\mathcal{S}}^{2}F(\x)\mathbf{d}_{\mathcal{S}},\mathbf{d}_{\mathcal{S}}\rangle+\frac{a}{2}\|\mathbf{d}\|^{2}+\frac{M}{6}\|\mathbf{d}\|^{3}.
			\end{aligned}
			\end{equation*}
			
			\noindent Combining the above estimates establishes \eqref{ineq:cubic_upper} and completes the
			proof.\qed
		\end{proof}

The following Theorem shows that the line search for $\beta_{k}$ in Algorithm \ref{alg:apgcn} will terminate in finite step.
\begin{theorem}
\label{Newton_decrease}
Assume that $F$ defined in \eqref{Problem} satisfies Assumption~\ref{Ass1}.
At iteration $k$, suppose that $\mathcal{S}^k\neq\emptyset$ and  $ c>0$
\begin{equation}\label{sigmak}
\sigma_{k} \;\geqslant\; \frac{2(M(\mathbf{x}^{k})+c)}{3}.
\end{equation} Then, the procedure of finding the smallest $j_k \in\{0,1,2,\ldots\}$ such that 
\begin{equation}\label{back}
F(\mathbf{y}^{k,j_k})
\le
F(\mathbf{x}^{k})
-\frac{\eta}{2}\|\mathbf{y}^{k,j_k}-\mathbf{x}^{k}\|^{2}
-\frac{c}{6}\|\mathbf{y}^{k,j_k}-\mathbf{x}^{k}\|^{3}
\end{equation}
is executed in a finite number of times, and so Algorithm \ref{alg:apgcn} is well-defined. 
%Consequently, there exists a finite $\widehat j_k$ such that $\mathbf{y}^{k}=\mathbf{y}^{k,\widehat j_k},\beta_k = \beta_{k,\widehat j_k}$, i.e.,
%the backtracking step of Line 9-16 will
%terminate in a finite iterations.
\end{theorem}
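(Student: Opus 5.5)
The plan is to show that, for all sufficiently small step sizes $\beta=q^{j}$, the trial point $\mathbf{y}^{k,j}$ satisfies \eqref{back}. Since $\beta_{k,j}=q^j\to 0$, the loop then stops after finitely many trials. If $\mathbf{d}^k=\mathbf{0}$, then $\mathbf{y}^{k,0}=\mathbf{x}^k$ and \eqref{back} holds trivially, so assume $\mathbf{d}^k\neq\mathbf{0}$. Write $\widehat{\mathbf{d}}\in\R^n$ for the vector equal to $\mathbf{d}^k$ on $\mathcal{S}^k$ and zero elsewhere, so that $\mathbf{y}^{k,j}=\mathbf{x}^k+\beta_{k,j}\widehat{\mathbf{d}}$.

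\textbf{Step 1 (localization).} Every $i\in\mathcal{S}^k$ satisfies $|x^k_i|\ge\varepsilon_k>0$. Hence there is $\bar\beta\in(0,1]$ such that $\supp(\mathbf{x}^k+t\beta\widehat{\mathbf{d}})=\supp(\mathbf{x}^k)$ for all $t\in[0,1]$ and all $\beta\le\bar\beta$. Next, Assumption~\ref{Ass1}(i) and $\omega_k>1$ give $M(\mathbf{x}^k)>\mathrm{lip}_{\M}\nabla^2_{\mathcal{S}^k}F(\mathbf{x}^k)$, which is where the strict inequality is used. By the definition of the Lipschitz modulus there is $\delta>0$ with $\|\nabla^2_{\mathcal S^k}F(\z)-\nabla^2_{\mathcal S^k}F(\mathbf{x}^k)\|\le M(\mathbf{x}^k)\|\z-\mathbf{x}^k\|$ for all $\z\in\M\cap\cB(\mathbf{x}^k,\delta)$. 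Shrinking $\bar\beta$ so that $\bar\beta\|\mathbf{d}^k\|<\delta$, Lemma~\ref{lem:inexact_cubic_upper} applies with $\mathbf{d}=\beta\widehat{\mathbf{d}}$, $a=0$ and $M=M(\mathbf{x}^k)$. It yields
\[
F(\mathbf{y}^{k,j})\le F(\mathbf{x}^k)+\beta\langle\g^k,\mathbf{d}^k\rangle+\tfrac{\beta^2}{2}\langle H^k\mathbf{d}^k,\mathbf{d}^k\rangle+\tfrac{M(\mathbf{x}^k)}{6}\beta^3\|\mathbf{d}^k\|^3 .
\]
The main obstacle is this step. One has to check carefully that the strict-continuity hypothesis really delivers a uniform local Lipschitz bound on the whole segment, including differentiability of $t\mapsto\nabla_{\mathcal S^k}F(\mathbf{x}^k+t\beta\widehat{\mathbf{d}})$ along $\M$. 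Both follow from the support preservation just established.

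\textbf{Step 2 (use the subproblem).} Put $A:=H^k+\eta I$. Proposition~\ref{thm:cubic_solution} gives $\g^k=-(A+\tfrac{\sigma_k}{2}\|\mathbf{d}^k\|I)\mathbf{d}^k$, hence $\langle\g^k,\mathbf{d}^k\rangle=-\langle A\mathbf{d}^k,\mathbf{d}^k\rangle-\tfrac{\sigma_k}{2}\|\mathbf{d}^k\|^3$. Proposition~\ref{secondorder} gives $\langle A\mathbf{d}^k,\mathbf{d}^k\rangle\ge-\tfrac{\sigma_k}{2}\|\mathbf{d}^k\|^3$. Substituting $\langle H^k\mathbf{d}^k,\mathbf{d}^k\rangle=\langle A\mathbf{d}^k,\mathbf{d}^k\rangle-\eta\|\mathbf{d}^k\|^2$, the linear and quadratic model terms equal
\[
-(\beta-\tfrac{\beta^2}{2})\langle A\mathbf{d}^k,\mathbf{d}^k\rangle-\tfrac{\eta\beta^2}{2}\|\mathbf{d}^k\|^2-\tfrac{\beta\sigma_k}{2}\|\mathbf{d}^k\|^3 .
\]
Since $\beta-\beta^2/2\ge0$ for $\beta\le1$, the lower bound on $\langle A\mathbf{d}^k,\mathbf{d}^k\rangle$ bounds this expression above by $-\tfrac{\eta}{2}\beta^2\|\mathbf{d}^k\|^2-\tfrac{\sigma_k}{4}\beta^2\|\mathbf{d}^k\|^3$.

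\textbf{Step 3 (conclude).} Use $\|\mathbf{y}^{k,j}-\mathbf{x}^k\|=\beta\|\mathbf{d}^k\|$. Then \eqref{back} follows as soon as
\[
\tfrac{\sigma_k}{4}\beta^2\ge\tfrac{M(\mathbf{x}^k)+c}{6}\beta^3 ,
\]
that is, $\sigma_k\ge\tfrac{2(M(\mathbf{x}^k)+c)}{3}\beta$. This holds for every $\beta\le1$ by \eqref{sigmak}. Hence every $j$ with $q^j\le\bar\beta$ is accepted, the smallest accepted $j_k$ is finite, and Algorithm~\ref{alg:apgcn} is well defined. Because \eqref{back} uses $\eta/2$, which is stronger than the $\eta/4$ test in the algorithm, the algorithm's own loop also terminates.
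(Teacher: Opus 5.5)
Your proposal is correct and follows the paper's proof essentially step for step: support preservation plus a local Hessian Lipschitz bound to invoke Lemma~\ref{lem:inexact_cubic_upper} with $a=0$, then the first- and second-order optimality conditions of the cubic subproblem to reach $-\frac{\eta}{2}\beta^2\|\mathbf{d}^k\|^2-\frac{\sigma_k}{4}\beta^2\|\mathbf{d}^k\|^3$, and finally $\sigma_k\ge \frac{2(M(\mathbf{x}^k)+c)}{3}\ge\frac{2(M(\mathbf{x}^k)+c)}{3}\beta$ for $\beta\le 1$. One small caveat, which the paper shares: $\omega_k>1$ gives $M(\mathbf{x}^k)>\mathrm{lip}\,\nabla^2_{\mathcal{S}^k}F(\mathbf{x}^k)$ only when the modulus is positive, so if both equal $0$, run the argument with some $M'>0$ in place of $M(\mathbf{x}^k)$ and additionally require $\bar\beta\le 3\sigma_k/(2(M'+c))$, which is possible because $\sigma_k\ge 2c/3>0$.
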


\begin{proof}
Let $\M^k:=\{\z\in\R^{n}:\supp(\z)=\supp(\x^k)\}$. For
the $M(\x^k)$ defined in Algorithm \ref{alg:apgcn}, there exists $\delta_k >0$ such that 
\begin{equation}\label{mmk}
\bigl\|
\nabla^2_{\mathcal{S}^{k}}F(\mathbf{x}_1)
-
\nabla^2_{\mathcal{S}^{k}}F(\mathbf{x}_2)
\bigr\|
\le
M(\mathbf{x}^k)\,
\|\mathbf{x}_1-\mathbf{x}_2\|,
\;
\forall \mathbf{x}_1,\mathbf{x}_2 \in \cB(\x^k,\delta_k)\cap \mathcal{M}^k.\\
\end{equation}
%due to $M(\x^k) \geq \omega_k\mathrm{lip}\nabla^2_{\mathcal{S}^k}F({\x^k})$, and $\omega_k\geq\underline{\omega}>1$.

Since $\supp(\mathbf{d}^{k})  \subseteq \supp(\x^{k})$, there exists sufficiently small $\widehat{\beta}_k\in(0,1]$ such that $\supp(\x^{k}+\beta\mathbf{d}^{k})=\supp(\x^{k})$ and $\x^{k}+\beta\mathbf{d}^{k}\in\cB(\mathbf{x}^k,\delta_k)\cap \mathcal{M}^k$ for all $\beta\in(0, \widehat{\beta}_k]$.
Invoking Lemma~\ref{lem:inexact_cubic_upper} and (\ref{mmk}), we obtain that
\begin{equation}
\label{eq:beta_upper}
F(\x^k+\beta \mathbf{d}^k)
\le
F(\x^{k})
+ \beta\langle \g^{k},\mathbf{d}^{k}\rangle
+ \frac{\beta^{2}}{2}\langle H^{k}\mathbf{d}^{k},\mathbf{d}^{k}\rangle
+ \frac{M(\x^{k})\beta^{3}}{6}\|\mathbf{d}^{k}\|^{3}.
\end{equation}

\noindent Invoking the optimality conditions of (\ref{subd}) and Proposition \ref{secondorder},
\begin{subequations} \label{eq:opt1}
	\begin{numcases}{\hbox{\quad}}
	\label{gsub}\g^{k} + (H^{k}+\eta I)\h d^{k}
+ \frac{\sigma_{k}}{2}\|\h d^{k}\|\h d^{k} = 0,\\[0.0cm]
	\label{Hsub}H^{k}+\eta I + \frac{\sigma_{k}}{2}\|\h d^{k}\| I \succeq 0.
	\end{numcases}
\end{subequations}

%\begin{eqnarray}\label{eq:opt1}\left\{\begin{array}{l}
%		\displaystyle{\g^{k} + (H^{k}+\eta I)\mathbf{d}^{k}
%			+ \frac{\sigma_{k}}{2}\|\mathbf{d}^{k}\|\mathbf{d}^{k} = 0}, \\[0.1cm]
%		\displaystyle{H^{k}+\eta I + \frac{\sigma_{k}}{2}\|\mathbf{d}^{k}\| I \succeq 0}.
%	\end{array}\right.
%\end{eqnarray}
Taking the inner product of the first equality in \eqref{gsub} with $\mathbf{d}^{k}$ yields
\begin{equation*}
\label{eq:first-lb}
\langle \g^{k},\ \mathbf{d}^{k}\rangle
=
-\langle (H^{k}+\eta I)\mathbf{d}^{k},\ \mathbf{d}^{k}\rangle
-\frac{\sigma_{k}}{2}\|\mathbf{d}^{k}\|^{3}.
\end{equation*}
Moreover, (\ref{Hsub}) leads to
$
\langle H^{k}\mathbf{d}^{k},\mathbf{d}^{k}\rangle
\geqslant
-\eta\|\mathbf{d}^{k}\|^{2}
-\frac{\sigma_{k}}{2}\|\mathbf{d}^{k}\|^{3}.$
\noindent Substituting these two  into \eqref{eq:beta_upper}, we obtain
%\begin{equation}
	\begin{eqnarray}
		&&F(\x^{k}+\beta\mathbf{d}^{k})
		\le
		F(\x^{k})
		+ \Big(\frac{\beta^{2}}{2}-\beta\Big)\langle H^{k}\mathbf{d}^{k},\ \mathbf{d}^{k}\rangle
		- \eta\beta\|\mathbf{d}^{k}\|^{2}- \Big(\frac{\sigma_{k}\beta}{2}- \frac{M(\x^{k})\beta^{3}}{6}\Big)\|\mathbf{d}^{k}\|^{3}\nn\\
		&&\le F(\x^k) - \frac{\eta\beta^2}{2}\|\mathbf{d}^{k}\|^{2} -  \Big(\frac{\sigma_{k}\beta^2}{4}- \frac{M(\x^{k})\beta^{3}}{6}\Big)\|\mathbf{d}^{k}\|^{3}\nn\\
		&&\le
		F(\x^{k})
		- \frac{\eta\beta^2}{2}\|\mathbf{d}^{k}\|^{2}
		- \frac{c\beta^3}{6}\|\mathbf{d}^{k}\|^{3}.\label{key11}
	\end{eqnarray}
%\end{equation}
\noindent Thus, there exists a finite $\widehat j_k$ such that $\beta_k = \beta_{k,\widehat j_k}\in (0,\widehat{\beta}_k]$,
and (\ref{back}) holds.
It implies that the backtracking step of Line 9-16 will terminate in finite many iterations, and Algorithm \ref{alg:apgcn} is well-defined. \qed
\end{proof}

%	\begin{Thm}
%		Suppose $\{X_k\}$ is bounded, then every accumulation point of $\{X_{k}\}$ is a stationary point for (\ref{matrix_problem}).
%	\end{Thm}
%	\begin{proof}
%		Suppose $X_{k_j}\rightarrow \overline{X}$. Define $S_{k_j} := U_{k_j}\Sigma_{k_j}V_{k_j}^{\top}$. If $k_j\in\mathcal{A}_1$, the optimal condition of $X_{k_j}$ leads to:
%		\begin{eqnarray*}
%			0 \in \partial g(X_{k_j}) + \frac{1}{\alpha}(X_{k_j} - S_{k_j})
%		\end{eqnarray*}
%		which implies:
%		\begin{eqnarray*}
%			\nabla h(X_{k_j}) - \nabla h(X_{k_j-1}) + \frac{1}{\alpha}(S_{k_j} - X_{k_j} + \alpha \nabla h(X_{k_j - 1})) + \frac{1}{\alpha}( X_{k_j-1} - X_{k_j}) \in \partial g(X_{k_j}) + \nabla h(X_{k_j}).
%		\end{eqnarray*}
%		Therefore:
%		\begin{eqnarray*}
%			\text{dist}(X_{k_j},\mathcal{X}^*)&&\leq (L + \frac{1}{\alpha})||X_{k_j} - X_{k_j-1}|| + \frac{\epsilon_{k_j}}{\alpha}.
%		\end{eqnarray*}
%		Similarly, if $k_j\in\mathcal{A}_2$, we have:
%		\begin{eqnarray*}
%			\text{dist}(X_{k_j},\mathcal{X}^*)&&\leq (L + \frac{1}{\alpha})||X_{k_j} - X_{k_j-1}||\\
%			&&\leq (L + \frac{1}{\alpha})||X_{k_j} - X_{k_j-1}|| + \frac{\epsilon_{k_j}}{\alpha}.
%		\end{eqnarray*}
%		Note that $\lim_{j\rightarrow\infty}||X_{k_j} - X_{k_j-1}|| = 0$ and $\lim_{j\rightarrow\infty}\epsilon_{k_j} = 0$, then the assertion follows directly.
%	\end{proof}
%	Note that $\overline{X}$ is said to be a stationary point of (\ref{matrix_problem}) if it holds that:
The following establishes the sufficient descent property in one iteration.

\begin{theorem}
\label{lem:descent}
Suppose  Assumption~\ref{Ass1} holds.  Let the sequences of
$\{\mathbf{x}^{k}\}_{k\ge 0}$ and $\{\mathbf{y}^k\}_{k\ge 0}$ be generated from Algorithm~\ref{alg:apgcn}.
Let $\h x_0=\h y_0$.
If $0<\alpha<1/L$ and $\sigma_{k}$ satisfies \eqref{sigmak}, then there exists a
constant $\widetilde C > 0$ such that the following 
property holds for $k\ge1$:
\begin{equation}\label{ineq:x_sufficient_descent}
F(\mathbf{x}^{k})
\;\leq\;
F(\mathbf{x}^{k-1})
- \widetilde C\|\mathbf{x}^{k}-\mathbf{y}^{k-1}\|^2
- \widetilde C\|\mathbf{x}^{k-1}-\mathbf{y}^{k-1}\|^2.
\end{equation}
Consequently,
\begin{equation}\label{adjent}
\lim_{k\to\infty}\|\mathbf{x}^{k}-\mathbf{x}^{k-1}\|
=
\lim_{k\to\infty}\|\mathbf{x}^{k}-\mathbf{y}^{k-1}\|
=
\lim_{k\to\infty}\|\mathbf{x}^{k-1}-\mathbf{y}^{k-1}\|
=0.
\end{equation}
Furthermore, there exists a constant $\widehat{F}$ such that
$
\lim_{k\to\infty} F(\mathbf{x}^{k}) = \widehat{F}.
$
\end{theorem}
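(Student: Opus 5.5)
The plan is to combine two decrease estimates: one from the proximal-gradient step ($\mathbf{y}^{k-1}\to\mathbf{x}^k$) and one from the Newton/line-search step ($\mathbf{x}^{k-1}\to\mathbf{y}^{k-1}$).

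\emph{Proximal-gradient step.} Since $\mathbf{x}^k\in\prox_{\alpha g}(\mathbf{y}^{k-1}-\alpha\nabla h(\mathbf{y}^{k-1}))$, comparing the prox objective at $\mathbf{x}^k$ with its value at $\mathbf{y}^{k-1}$ gives
\[
g(\mathbf{x}^k)+\langle\nabla h(\mathbf{y}^{k-1}),\mathbf{x}^k-\mathbf{y}^{k-1}\rangle+\tfrac{1}{2\alpha}\|\mathbf{x}^k-\mathbf{y}^{k-1}\|^2\le g(\mathbf{y}^{k-1}).
\]
The descent lemma for $h$, which has an $L$-Lipschitz gradient, gives
\[
h(\mathbf{x}^k)\le h(\mathbf{y}^{k-1})+\langle\nabla h(\mathbf{y}^{k-1}),\mathbf{x}^k-\mathbf{y}^{k-1}\rangle+\tfrac L2\|\mathbf{x}^k-\mathbf{y}^{k-1}\|^2 .
\]
Adding the two inequalities yields
\[
F(\mathbf{x}^k)\le F(\mathbf{y}^{k-1})-\tfrac12\bigl(\tfrac1\alpha-L\bigr)\|\mathbf{x}^k-\mathbf{y}^{k-1}\|^2,
\]
where the constant is positive because $\alpha<1/L$. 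This step uses no convexity of $g$, only global optimality of the prox point.

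\emph{Newton step.} There are two cases at iteration $k-1$.
\begin{itemize}
\item If $\mathcal S^{k-1}\neq\emptyset$, Theorem~\ref{Newton_decrease} ensures the backtracking terminates, and the acceptance test gives
\[
F(\mathbf{y}^{k-1})\le F(\mathbf{x}^{k-1})-\tfrac\eta4\|\mathbf{y}^{k-1}-\mathbf{x}^{k-1}\|^2-\tfrac c6\|\mathbf{y}^{k-1}-\mathbf{x}^{k-1}\|^3\le F(\mathbf{x}^{k-1})-\tfrac\eta4\|\mathbf{y}^{k-1}-\mathbf{x}^{k-1}\|^2 .
\]
The algorithm uses $\eta/4$ while Theorem~\ref{Newton_decrease} states $\eta/2$; the weaker $\eta/4$ suffices either way.
\item If $\mathcal S^{k-1}=\emptyset$, then $\mathbf{y}^{k-1}=\mathbf{x}^{k-1}$ and the inequality holds trivially.
\end{itemize}
For $k=1$, the convention $\mathbf{x}^0=\mathbf{y}^0$ covers the base case. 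Chaining the two estimates gives \eqref{ineq:x_sufficient_descent} with $\widetilde C=\min\{\tfrac12(\tfrac1\alpha-L),\tfrac\eta4\}$.

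\emph{Consequences.} By \eqref{ineq:x_sufficient_descent}, $\{F(\mathbf{x}^k)\}$ is nonincreasing, and it is bounded below by $\underline F$ from Assumption~\ref{Ass1}(ii). Hence it converges to some $\widehat F$. Summing \eqref{ineq:x_sufficient_descent} telescopically gives
\[
\sum_k\bigl(\|\mathbf{x}^k-\mathbf{y}^{k-1}\|^2+\|\mathbf{x}^{k-1}-\mathbf{y}^{k-1}\|^2\bigr)\le\bigl(F(\mathbf{x}^0)-\underline F\bigr)/\widetilde C<\infty ,
\]
so both differences tend to zero. The triangle inequality $\|\mathbf{x}^k-\mathbf{x}^{k-1}\|\le\|\mathbf{x}^k-\mathbf{y}^{k-1}\|+\|\mathbf{y}^{k-1}-\mathbf{x}^{k-1}\|$ then gives \eqref{adjent}.

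The main point requiring care is finiteness: $F(\mathbf{x}^0)$ must be finite, i.e.\ $\mathbf{y}^0\in\dom g$. If it is not, I would start the telescoping from $k=1$, since $\mathbf{x}^1$ lies in $\dom g$ and all later values are finite by monotonicity. Otherwise the argument is routine.
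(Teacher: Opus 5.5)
Your proposal is correct and follows the paper's proof essentially step for step. You chain the proximal-gradient descent inequality $F(\mathbf{x}^k)\le F(\mathbf{y}^{k-1})-\tfrac12(\tfrac1\alpha-L)\|\mathbf{x}^k-\mathbf{y}^{k-1}\|^2$ with the line-search acceptance test (or with $\mathbf{y}^{k-1}=\mathbf{x}^{k-1}$ when $\mathcal S^{k-1}=\emptyset$), use the same constant $\widetilde C=\min\{\tfrac12(\tfrac1\alpha-L),\tfrac\eta4\}$, and telescope against $\underline F$, exactly as the paper does; your derivation of the prox descent inequality, the $k=1$ case, the triangle-inequality step, and the remark on finiteness of $F(\mathbf{x}^0)$ only make explicit what the paper leaves implicit.
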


\begin{proof}
From the update rule of the proximal-gradient step, we have
\begin{equation}\label{ieq:PG_descent}
F(\mathbf{x}^{k}) \le F(\mathbf{y}^{k-1})
- \frac{1}{2}\left(\frac{1}{\alpha}-L\right)\|\mathbf{x}^{k}-\mathbf{y}^{k-1}\|^2 .
\end{equation}
Next, we divide two cases to verify: (a) $\mathcal{S}^{k-1} \neq \emptyset$; and
(b) $\mathcal{S}^{k-1} = \emptyset$ for  $k\ge1$. \\
Case(a). It leads to $
F(\mathbf{y}^{k-1})
\le
F(\mathbf{x}^{k-1})
-\frac{\eta}{4}\|\mathbf{y}^{k-1}-\mathbf{x}^{k-1}\|^{2}
-\frac{c}{6}\|\mathbf{y}^{k-1}-\mathbf{x}^{k-1}\|^{3}.
$
Combining this inequality with  (\ref{ieq:PG_descent}), we obtain
\begin{equation*}
	\begin{aligned}
		F(\mathbf{x}^{k})
		&\le F(\mathbf{x}^{k-1})
		- \frac{1}{2}\left(\frac{1}{\alpha}-L\right)\|\mathbf{x}^{k}-\mathbf{y}^{k-1}\|^2
		- \frac{c}{6}\|\mathbf{x}^{k-1}-\mathbf{y}^{k-1}\|^3
		- \frac{\eta}{4}\|\mathbf{x}^{k-1}-\mathbf{y}^{k-1}\|^2 \notag \\
		&\le F(\mathbf{x}^{k-1})
		- \frac{1}{2}\left(\frac{1}{\alpha}-L\right)\|\mathbf{x}^{k}-\mathbf{y}^{k-1}\|^2
		- \frac{\eta}{4}\|\mathbf{x}^{k-1}-\mathbf{y}^{k-1}\|^2 \notag
	\end{aligned}
\end{equation*}
\noindent Let  $\widetilde C := \min\left\{\frac{1}{2}\left(\frac{1}{\alpha}-L\right), \frac{\eta}{4}\right\}$, then
(\ref{ineq:x_sufficient_descent}) follows directly.
Case (b). It leads to $\y^{k-1} = \x^{k-1}$.  (\ref{ineq:x_sufficient_descent}) also holds.
Rearranging (\ref{ineq:x_sufficient_descent}) and summing the resulting inequalities telescopically yields
$
\widetilde C \sum_{k=1}^{\infty}
\left(
\|\mathbf{x}^{k}-\mathbf{y}^{k-1}\|^2
+ \|\mathbf{x}^{k-1}-\mathbf{y}^{k-1}\|^2
\right)
\le F(\mathbf{x}^0) - \underline{F} < \infty .
$
Hence, \eqref{adjent} holds.
Finally, since $\{F(\mathbf{x}^{k})\}$ is nonincreasing and bounded below, there exists a constant $\widehat{F}$ such that
$\lim_{k\to\infty} F(\mathbf{x}^{k}) = \widehat{F}.$\qed
\end{proof}

\subsection{Global Convergence}
To establish the sequential convergence of
Algorithm~\ref{alg:apgcn}, we further impose the following assumption.

\begin{assumption}
\label{Ass2}
The objective function $F$ in (\ref{Problem}) is coercive, i.e., $F(\h x)\to +\infty$ as $\|\h x\|\to +\infty$.
\end{assumption}
As a consequence of Assumption \ref{Ass2}, all iterates remain in a compact set.
\begin{lemma}
Suppose that Assumptions~\ref{Ass1} and~\ref{Ass2} hold. Let $\{\mathbf{x}^{k}\}_{k\ge0}$ and $\{\mathbf{y}^{k}\}_{k\ge0}$ be generated from Algorithm~\ref{alg:apgcn}. Let $\h x_0=\h y_0$.
Denote $\overline{\mathcal{X}}$ as the set of all accumulation points of $\{\mathbf{x}^{k}\}$.
If $0<\alpha<1/L$
and $\sigma_{k}$ satisfies \eqref{sigmak},
then the following assertions hold:
\begin{itemize}
\item[(i)] The sequence $\{\mathbf{y}^k\}$ is bounded and admits the same  accumulation points set $\overline{\mathcal{X}}$.
\item[(ii)] $F(\overline{\mathbf{x}})= \widehat{F}$ for all $\overline{\mathbf{x}}\in\overline{\mathcal{X}}$,
where $\widehat{F}$ is defined in Theorem~\ref{lem:descent}.
\item[(iii)] Every accumulation point $\overline{\mathbf{x}}$ of $\{\mathbf{x}^k\}$ and $\{\mathbf{y}^k\}$ is a
critical point of problem~\eqref{Problem}, i.e.,
\begin{equation}\label{critical}
{{\bf 0}} \in\partial g(\overline{\mathbf{x}}) + \nabla h(\overline{\mathbf{x}}).\end{equation}
\end{itemize}
\end{lemma}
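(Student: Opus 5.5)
The plan is to first get compactness from coercivity, then use Theorem~\ref{lem:descent} for parts (i)--(iii).

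\textbf{Part (i).} By Theorem~\ref{lem:descent}, $F(\mathbf{x}^k)\le F(\mathbf{x}^0)$ for all $k$. By the line-search acceptance rule, $F(\mathbf{y}^k)\le F(\mathbf{x}^k)\le F(\mathbf{x}^0)$ as well; when $\mathcal S^k=\emptyset$ we simply have $\mathbf{y}^k=\mathbf{x}^k$. Hence both sequences lie in the level set $\{F\le F(\mathbf{x}^0)\}$, which is bounded by Assumption~\ref{Ass2}. Since $\|\mathbf{x}^k-\mathbf{y}^k\|\to0$ by \eqref{adjent}, a subsequence $\mathbf{x}^{k_j}\to\overline{\mathbf{x}}$ forces $\mathbf{y}^{k_j}\to\overline{\mathbf{x}}$, and conversely. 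So the two sequences have the same set of accumulation points $\overline{\mathcal X}$. By \eqref{adjent}, $\mathbf{x}^{k_j+1}\to\overline{\mathbf{x}}$ and $\mathbf{y}^{k_j-1}\to\overline{\mathbf{x}}$ along the same subsequence.

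\textbf{Part (ii).} Take $\mathbf{x}^{k_j}\to\overline{\mathbf{x}}$ and write $\mathbf{u}^{j}:=\mathbf{y}^{k_j-1}-\alpha\nabla h(\mathbf{y}^{k_j-1})$. Comparing the prox objective at $\mathbf{x}^{k_j}$ with its value at the point $\overline{\mathbf{x}}$ gives
\[
g(\mathbf{x}^{k_j})+\tfrac{1}{2\alpha}\|\mathbf{x}^{k_j}-\mathbf{u}^{j}\|^2\le g(\overline{\mathbf{x}})+\tfrac{1}{2\alpha}\|\overline{\mathbf{x}}-\mathbf{u}^{j}\|^2 .
\]
Because $\mathbf{x}^{k_j}-\mathbf{y}^{k_j-1}\to0$ and $\mathbf{y}^{k_j-1}\to\overline{\mathbf{x}}$, both quadratic terms tend to $\tfrac{\alpha}{2}\|\nabla h(\overline{\mathbf{x}})\|^2$, so $\limsup_j g(\mathbf{x}^{k_j})\le g(\overline{\mathbf{x}})$. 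Lower semicontinuity gives the reverse inequality, so $g(\mathbf{x}^{k_j})\to g(\overline{\mathbf{x}})$. Continuity of $h$ then yields $F(\mathbf{x}^{k_j})\to F(\overline{\mathbf{x}})$. Combined with $F(\mathbf{x}^k)\to\widehat F$ from Theorem~\ref{lem:descent}, this gives $F(\overline{\mathbf{x}})=\widehat F$.

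\textbf{Part (iii).} The optimality condition of the prox step gives
\[
\mathbf{0}\in\partial g(\mathbf{x}^k)+\tfrac1\alpha(\mathbf{x}^k-\mathbf{y}^{k-1})+\nabla h(\mathbf{y}^{k-1}).
\]
Therefore
\[
\mathbf{w}^k:=\nabla h(\mathbf{x}^k)-\nabla h(\mathbf{y}^{k-1})-\tfrac1\alpha(\mathbf{x}^k-\mathbf{y}^{k-1})\in\partial g(\mathbf{x}^k)+\nabla h(\mathbf{x}^k)=\partial F(\mathbf{x}^k),
\]
with $\|\mathbf{w}^k\|\le(L+\tfrac1\alpha)\|\mathbf{x}^k-\mathbf{y}^{k-1}\|\to0$. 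Along $\mathbf{x}^{k_j}\to\overline{\mathbf{x}}$ we have $\mathbf{w}^{k_j}\to\mathbf{0}$ and, by part (ii), $F(\mathbf{x}^{k_j})\to F(\overline{\mathbf{x}})$. The closedness of the limiting subdifferential under $F$-attentive convergence then gives \eqref{critical}. By part (i) the same conclusion holds for accumulation points of $\{\mathbf{y}^k\}$.

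\textbf{Main obstacle.} The delicate step is the function-value convergence $g(\mathbf{x}^{k_j})\to g(\overline{\mathbf{x}})$, since $g$ is only lower semicontinuous and the limiting subdifferential requires attentive convergence. The prox comparison argument above handles this, and it works precisely because the shifted points $\mathbf{y}^{k_j-1}$ converge to the same limit $\overline{\mathbf{x}}$.
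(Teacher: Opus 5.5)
Your proposal is correct and follows the standard argument that the paper omits by deferring to \citep[Theorem 6.2]{boct2025full}: boundedness from monotone descent plus coercivity, a common accumulation set because $\|\mathbf{x}^k-\mathbf{y}^k\|\to 0$, $F$-attentive convergence from the prox-comparison inequality combined with lower semicontinuity, and criticality from closedness of the limiting subdifferential applied to $\mathbf{w}^k\in\partial F(\mathbf{x}^k)$. The only implicit point is that the level-set argument needs $F(\mathbf{x}^0)<\infty$; if that fails, start from $\mathbf{x}^1\in\mathrm{dom}\, g$ instead. The paper makes the same tacit assumption.
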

\begin{proof}
The proof is similar to \citep[Theorem 6.2]{boct2025full}, thus omitted here.\qed
\end{proof}

\begin{lemma}
\label{thm:subgradient_bound}
Suppose that Assumptions~\ref{Ass1} and~\ref{Ass2} hold. 
Let $\{\mathbf{x}^{k}\}$ and $\{\mathbf{y}^{k}\}$ be generated from Algorithm~\ref{alg:apgcn}.
If $0<\alpha<1/L$ and $\sigma_{k}$ satisfies \eqref{sigmak}, then the following  holds:
\begin{eqnarray}\label{ineq:dist_upper}
&\mathrm{dist}\bigl({{\bf 0}},\partial F(\mathbf{x}^{k})\bigr)
\le \left(\frac{1}{\alpha}+L\right)
\|\mathbf{x}^k - \mathbf{y}^{k-1}\| \nn\\
&\le \left(\frac{1}{\alpha}+L\right)
\bigl(\|\mathbf{x}^{k} - \mathbf{y}^{k-1}\|
+ \|\mathbf{x}^{k-1} - \mathbf{y}^{k-1}\|\bigr).
\end{eqnarray}
\end{lemma}

\begin{proof}
	 The proof is similar to \citep[Lemma 5.7]{Tao20}, thus omitted here.\qed
%Invoking the optimality condition of (\ref{PGstep}), we have
%$-\frac{1}{\alpha}(\mathbf{x}^{k} - \mathbf{y}^{k-1}) + \nabla f(\mathbf{x}^{k}) - \nabla f(\mathbf{y}^{k-1})\in \partial g(\mathbf{x}^{k}) + \nabla h(\mathbf{x}^{k}).$
%It implies that (\ref{ineq:dist_upper}) holds.\qed
\end{proof}
Next we establish the global convergence of  Algorithm~\ref{alg:apgcn} under the KL property of $F$.
The proof is similar to \citep[Theorem 6.11]{boct2025full}. 

\begin{theorem}
\label{thm:global_convegence}
Suppose that Assumptions~\ref{Ass1} and~\ref{Ass2} hold, and let $\{\mathbf{x}^{k}\}_{k\ge 0}$ and $\{\mathbf{y}^{k}\}_{k\ge 0}$ be generated from Algorithm~\ref{alg:apgcn}. Let $\h x_0=\h y_0$. Assume that $F$ satisfies the Kurdyka--Łojasiewicz (KL) property at every point in $\dom(\partial F)$. If $0<\alpha<1/L$ and $\{\sigma_k\}$ satisfies \eqref{sigmak}, then
$$
\sum_{k=1}^{\infty} \|\mathbf{x}^{k} - \mathbf{x}^{k-1}\| \leq\sum_{k=1}^{\infty} \left(||{\mathbf{x}}^{k} - {\mathbf{y}}^{k-1} || + ||{\mathbf{x}}^{k-1} - {\mathbf{y}}^{k-1} ||\right) < \infty.$$
Consequently, the sequences  $\{\mathbf{x}^{k}\}_{k\ge 0}$ and $\{\mathbf{y}^{k}\}_{k\ge 0}$  both converge to the same stationary point $\overline{\mathbf{x}}$ satisfying \eqref{critical}.
\end{theorem}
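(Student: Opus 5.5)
We follow the standard Attouch--Bolte--Svaiter template, applied to the sequence $\{\mathbf{x}^k\}$ with the residuals $r_k:=\|\mathbf{x}^{k}-\mathbf{y}^{k-1}\|+\|\mathbf{x}^{k-1}-\mathbf{y}^{k-1}\|$. The first inequality in the claim is just the triangle inequality $\|\mathbf{x}^k-\mathbf{x}^{k-1}\|\le r_k$, so everything hinges on showing $\sum_k r_k<\infty$.

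\emph{Trivial case.} If $F(\mathbf{x}^{K})=\widehat F$ for some $K$, then monotonicity in Theorem~\ref{lem:descent} gives $F(\mathbf{x}^k)=\widehat F$ for all $k\ge K$. Then \eqref{ineq:x_sufficient_descent} forces $r_{k}=0$ for $k>K$, and summability is immediate. We may therefore assume $F(\mathbf{x}^k)>\widehat F$ for all $k$.

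\emph{Uniformized KL property.} By the preceding lemma, the set $\overline{\mathcal X}$ is nonempty and compact (by coercivity), $F\equiv\widehat F$ on $\overline{\mathcal X}$, and $\mathrm{dist}(\mathbf{x}^k,\overline{\mathcal X})\to0$. The uniformized KL lemma of Bolte--Sabach--Teboulle then provides $\epsilon,\eta>0$ and $\phi\in\Phi_\eta$ such that, for all $k\ge k_0$,
\[
\phi'\bigl(F(\mathbf{x}^k)-\widehat F\bigr)\,\mathrm{dist}\bigl(\mathbf{0},\partial F(\mathbf{x}^k)\bigr)\ge1 .
\]
Combining this with Lemma~\ref{thm:subgradient_bound} gives
\[
\phi'\bigl(F(\mathbf{x}^k)-\widehat F\bigr)\ge \frac{1}{(1/\alpha+L)\,r_k}.
\]

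\emph{Key estimate.} Set $\Delta_k:=\phi(F(\mathbf{x}^k)-\widehat F)-\phi(F(\mathbf{x}^{k+1})-\widehat F)$. Concavity of $\phi$, together with \eqref{ineq:x_sufficient_descent} applied at index $k+1$ and the inequality $a^2+b^2\ge (a+b)^2/2$, yields
\[
\Delta_k\ge \phi'\bigl(F(\mathbf{x}^k)-\widehat F\bigr)\bigl(F(\mathbf{x}^k)-F(\mathbf{x}^{k+1})\bigr)\ge \frac{\widetilde C\, r_{k+1}^2}{2(1/\alpha+L)\, r_k}.
\]
Hence $r_{k+1}^2\le C'\Delta_k r_k$ with $C':=2(1/\alpha+L)/\widetilde C$. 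Taking square roots and applying AM--GM gives
\[
2r_{k+1}\le r_k+C'\Delta_k .
\]
Summing over $k\ge k_0$ telescopes the $\Delta_k$ terms, bounding their sum by $\phi(F(\mathbf{x}^{k_0})-\widehat F)$. This gives $\sum_k r_k<\infty$.

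\emph{Convergence.} Summability shows that $\{\mathbf{x}^k\}$ has finite length, so it is Cauchy and converges to some $\overline{\mathbf{x}}$. Moreover, $\|\mathbf{y}^{k-1}-\mathbf{x}^{k-1}\|\le r_k\to0$, so $\mathbf{y}^k\to\overline{\mathbf{x}}$ as well. Stationarity \eqref{critical} follows from part (iii) of the preceding lemma.

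The main obstacle is the index bookkeeping. The subgradient bound at $\mathbf{x}^k$ involves $r_k$, while the descent from $\mathbf{x}^k$ to $\mathbf{x}^{k+1}$ involves $r_{k+1}$. One must check that \eqref{ineq:x_sufficient_descent} is stated precisely so that these combine into $r_{k+1}^2\lesssim \Delta_k r_k$. A second point to verify is that $\mathbf{x}^k$ stays in the uniformized neighborhood with $F(\mathbf{x}^k)<\widehat F+\eta$ for all large $k$; this holds because $F(\mathbf{x}^k)\downarrow\widehat F$ and $\mathrm{dist}(\mathbf{x}^k,\overline{\mathcal X})\to0$.
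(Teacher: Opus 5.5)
Your proposal is correct and follows essentially the same route as the paper: the KL inequality at $\mathbf{x}^k$ combined with Lemma~\ref{thm:subgradient_bound}, concavity of $\phi$ together with \eqref{ineq:x_sufficient_descent} at the next index, then $(a+b)^2\le 2(a^2+b^2)$ and AM--GM to obtain $2r_{k+1}\le r_k+C'\Delta_k$, and finally telescoping. Your explicit treatment of the case $F(\mathbf{x}^K)=\widehat F$ and of the uniformized KL property on the compact set $\overline{\mathcal X}$ only makes rigorous what the paper leaves implicit, since the paper simply takes ``the desingularization function'' and defers to \citep[Theorem 6.11]{boct2025full}.
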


\begin{proof}
	Define $r_k := F(\mathbf{x}^k) - \widehat{F}$.
Let $\phi$ be the  desingularization function in Definition \ref{def:KL}. It yields that
\begin{equation}\label{ineq:KL_used}
\phi'(r_{k-1})\,
\mathrm{dist}({{\bf 0}},\partial F(\mathbf{x}^{k-1}))
\ge 1.
\end{equation}
Using the concave property of $\phi$ and let $k\ge2$,
\begin{equation*}
	\begin{aligned}
		&\phi(r_{k-1})-\phi(r_k) \ge \phi'(r_{k-1}) (r_{k-1}-r_k )\\
& \ge \widetilde{C}\phi'(r_{k-1})\left(\|\mathbf{x}^k - \mathbf{y}^{k-1}\|^2 +\|\mathbf{x}^{k-1} - \mathbf{y}^{k-1}\|^2\right) \\
		& \ge \frac{\|\mathbf{x}^k - \mathbf{y}^{k-1}\|^2 +\|\mathbf{x}^{k-1} - \mathbf{y}^{k-1}\|^2 }{\widehat M \left(\|\mathbf{x}^{k-1} - \mathbf{y}^{k-2}\|+ \|\mathbf{x}^{k-2} - \mathbf{y}^{k-2}\|\right)},
	\end{aligned}
\end{equation*}
where $\widehat M = (1/\alpha + L)/\widetilde{C}$. Rearrange the above, we have
\begin{equation*}
	\begin{aligned}
	&	\|\mathbf{x}^k - \mathbf{y}^{k-1}\| +\|\mathbf{x}^{k-1} - \mathbf{y}^{k-1}\|\\
& \leq \sqrt{2\widehat M(\phi(r_{k-1}) - \phi(r_{k}))\left(\|\mathbf{x}^{k-1} - \mathbf{y}^{k-2}\|+ \|\mathbf{x}^{k-2} - \mathbf{y}^{k-2}\|\right)}\\
		& \leq {\widehat M}(\phi(r_{k-1}) - \phi(r_{k})) + \frac{1}{2}\left(\|\mathbf{x}^{k-1} - \mathbf{y}^{k-2}\|+ \|\mathbf{x}^{k-2} - \mathbf{y}^{k-2}\|\right).
	\end{aligned}
\end{equation*}
So,
\begin{equation}
	\begin{aligned}\label{ineq:KL_telescope}
		&2(\|\mathbf{x}^k - \mathbf{y}^{k-1}\| +\|\mathbf{x}^{k-1} - \mathbf{y}^{k-1}\|)\\
& \leq {2}\widehat M(\phi(r_{k-1}) - \phi(r_{k})) + \left(\|\mathbf{x}^{k-1} - \mathbf{y}^{k-2}\| + \|\mathbf{x}^{k-2} - \mathbf{y}^{k-2}\| \right).
	\end{aligned}
\end{equation}
Combining (\ref{ineq:KL_telescope}) with $\phi > 0$, we have
%\begin{align*}
%		\sum_{k = K_0 + 2}^{K}\|\mathbf{x}^k - \mathbf{x}^{k-1}\| +\|\mathbf{x}^{k-1} - \mathbf{y}^{k-1}\| \leq 4M\phi(r_{K_0+1}) + \left(\|\mathbf{x}^{K_0+1} - \mathbf{x}^{K_0}\| + \|\mathbf{x}^{K_0} - \mathbf{y}^{K_0}\| \right).
%	\end{align*}
%	Let $K \to \infty$, we obtain 
$\sum_{k=1}^{\infty} \left(||{\mathbf{x}}^{k} - {\mathbf{y}}^{k-1} || + ||{\mathbf{x}}^{k-1} - {\mathbf{y}}^{k-1} ||\right) < \infty.$
Thus, $\{\mathbf{x}^k\}_{k\ge 0}$  converges to some $\overline{\mathbf{x}}$ satisfying (\ref{critical}). Moreover, $\{\y^k\}_{k\ge 0}$ also converges to $\overline{\mathbf{x}}$.
\qed
\end{proof}

%	\textcolor{red}{
%	Next, we review the standard error bound results under different KL exponents follows from the proof of \citep[Theorem 2]{attouch2009convergence}.
\begin{theorem}\label{thm:KL_rate}
Suppose that the assumptions of Theorem~\ref{thm:global_convegence} hold. 
Let $\{\x^k\}_{k\ge 0}$ and $\{\mathbf{y}^{k}\}_{k\ge 0}$ be the sequences generated from  Algorithm~\ref{alg:apgcn}. Let $\h x_0=\h y_0$.
Assume that $F$ satisfies the KL property with exponent $\theta \in (0,1)$ with desingularization function $\phi(s)=\mu^{-1} s^{1-\theta}$ with $\mu>0$. Then the following statements hold:
\begin{enumerate}
\item[(i)] For $\theta\in(0,1)$, there exists a constant $\widehat{C} > 0$ such that, for all sufficiently large $k$,
\begin{equation*}
	\|\x^k - \overline{\x}\|  \le \widehat C\left(\|\mathbf{x}^{k} - \mathbf{y}^{k-1}\| + \|\mathbf{x}^{k-1} - \mathbf{y}^{k-1}\| \right)^{\displaystyle{\min\left(1,\frac{1-\theta}{\theta}\right)}}.
\end{equation*}
\item[(ii)] If $\theta \in (0,\tfrac{1}{2}]$,  there exist constants $\widehat{C}_1 > 0$ and $\rho \in (0,1)$ such that, for all sufficiently large $k$,
\begin{equation*}
\|\x^k - \overline{\x}\| \le \widehat{C}_1 \rho^k.
\end{equation*}
\item[(iii)] If $\theta \in (\tfrac{1}{2},1)$,  there exists a constant $\widehat{C}_2 > 0$ such that, for all sufficiently large $k$,
\[
\|\x^k - \overline{\x}\| \le \widehat{C}_2 \, k^{-\frac{1-\theta}{2\theta-1}}.
\]
\end{enumerate}

\end{theorem}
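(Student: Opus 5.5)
We follow the standard Attouch--Bolte argument, now with the explicit desingularization function $\phi(s)=\mu^{-1}s^{1-\theta}$. Set $a_k:=\|\x^k-\y^{k-1}\|+\|\x^{k-1}-\y^{k-1}\|$ and $r_k:=F(\x^k)-\widehat F$. By Theorem~\ref{thm:global_convegence}, $\x^k\to\overline\x$ and $r_k\downarrow0$. If $r_k=0$ for some finite $k$, then (\ref{ineq:x_sufficient_descent}) forces $a_j=0$ for all $j>k$, and every claim is trivial. So we may assume $r_k>0$ and that the KL inequality of Definition~\ref{def:KL_exponent} holds at $\x^k$ for all large $k$.

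\textbf{Step 1: tail bound.} Summing (\ref{ineq:KL_telescope}) from $k+1$ to $\infty$ and using $\|\x^j-\x^{j-1}\|\le a_j$ gives
\[
\|\x^k-\overline\x\|\le \sum_{j>k}a_j\le 2\widehat M\phi(r_k)+a_k=2\widehat M\mu^{-1}r_k^{1-\theta}+a_k .
\]
Next combine the KL exponent inequality with Lemma~\ref{thm:subgradient_bound} to get $\mu r_k^{\theta}\le \mathrm{dist}(\mathbf 0,\partial F(\x^k))\le(1/\alpha+L)a_k$. Hence $r_k^{1-\theta}\le C a_k^{(1-\theta)/\theta}$, and so $\|\x^k-\overline\x\|\le C'(a_k+a_k^{(1-\theta)/\theta})$. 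Since $a_k\to0$, we may take $a_k\le1$, and then $a_k+a_k^{(1-\theta)/\theta}\le 2a_k^{\min(1,(1-\theta)/\theta)}$. This proves (i).

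\textbf{Step 2: rates via $\Delta_k:=\sum_{j\ge k}a_j$.} From Step 1 (with indices shifted), $\Delta_k\le 2\widehat M\mu^{-1}r_{k-1}^{1-\theta}+a_{k-1}$. Together with $r_{k-1}^{1-\theta}\le C a_{k-1}^{(1-\theta)/\theta}$ and $a_{k-1}=\Delta_{k-1}-\Delta_k$, this yields
\[
\Delta_k\le C_1\bigl[(\Delta_{k-1}-\Delta_k)+(\Delta_{k-1}-\Delta_k)^{(1-\theta)/\theta}\bigr].
\]
For $\theta\le1/2$ the exponent is at least $1$, and the bracket is at most $2(\Delta_{k-1}-\Delta_k)$ once the increments are at most $1$. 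This gives $\Delta_k\le\frac{2C_1}{1+2C_1}\Delta_{k-1}$, so $\Delta_k$ decays linearly. Since $\|\x^k-\overline\x\|\le\Delta_{k+1}$, this proves (ii).

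For $\theta\in(1/2,1)$ the second term dominates, and we get $\Delta_k^{\theta/(1-\theta)}\le C_2(\Delta_{k-1}-\Delta_k)$. This is the classical recursion treated in \citep[Theorem 2]{attouch2009convergence}. Compare $h(s)=s^{-\theta/(1-\theta)}$ on $[\Delta_k,\Delta_{k-1}]$, splitting into the two cases $h(\Delta_k)\le R\,h(\Delta_{k-1})$ and its negation, in order to show
\[
\Delta_k^{-(2\theta-1)/(1-\theta)}-\Delta_{k-1}^{-(2\theta-1)/(1-\theta)}\ge \nu>0 .
\]
Summing gives $\Delta_k\le C k^{-(1-\theta)/(2\theta-1)}$, which proves (iii).

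\textbf{Main obstacle.} The main obstacle is the bookkeeping in the sublinear case (iii). One has to pass carefully between $a_k$, $r_k$ and $\Delta_k$, keeping the index shifts consistent: the subgradient bound (\ref{ineq:dist_upper}) at $\x^{k}$ uses $a_k$, while the telescoping inequality links $a_k$ to $a_{k-1}$. The case analysis on the ratio of $h$ then has to be run exactly as in Attouch--Bolte.
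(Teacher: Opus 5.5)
Your proposal is correct and follows essentially the same route as the paper. You telescope (\ref{ineq:KL_telescope}) to get $\sum_{j>k}a_j\le 2\widehat M\phi(r_k)+a_k$, turn $\phi(r_k)$ into a multiple of $a_k^{(1-\theta)/\theta}$ via the KL exponent and Lemma~\ref{thm:subgradient_bound}, and run the Attouch--Bolte recursions on $\Delta_k$ for (ii) and (iii), while also making explicit details the paper leaves implicit: the case $r_k=0$, the use of $a_k\le 1$ to merge the two powers in (i), and $\|\x^k-\overline{\x}\|\le\sum_{j>k}a_j$.
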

%}
\begin{proof}
(i) Combining  (\ref{ineq:dist_upper}) and (\ref{ineq:KL_used}) with $k:=k+1$, and using $\phi'(r_k)=\mu^{-1}(1-\theta)r_k^{-\theta}$, we have
\begin{equation}
\left(\frac{1}{\alpha}+L\right)
\bigl(\|\mathbf{x}^{k} - \mathbf{y}^{k-1}\|
+ \|\mathbf{x}^{k-1} - \mathbf{y}^{k-1}\|\bigr)\ge  \mathrm{dist}(0,\partial F(\mathbf{x}^{k})) \ge \frac{\mu r_k^\theta}{1-\theta}.	\label{keyineq}
\end{equation}

\noindent It follows from (\ref{ineq:KL_telescope}) and define  $\Delta_k:=\sum_{p=k}^\infty \left(\|\mathbf{x}^{p+1} - \mathbf{y}^{p}\| +\|\mathbf{x}^{p} - \mathbf{y}^{p}\|\right)$, we have
\begin{equation*}
\begin{aligned}
&2\underbrace{\sum_{p=k+1}^{+\infty} \left(\|\mathbf{x}^p-{\mathbf{y}}^{p-1}\|+\|\mathbf{x}^{p-1}-\mathbf{y}^{p-1}\|\right)}_{\Delta_k}\nn\\
&\le {2}{\widehat M}\phi(r_k)+\underbrace{\sum_{p=k+1}^{+\infty}\left( 
\|\mathbf{x}^p-{\mathbf{y}}^{p-1}\|+\|\mathbf{x}^{p-1}-\mathbf{y}^{p-1}\|\right)}_{\Delta_k}+\|\mathbf{x}^k-\mathbf{y}^{k-1}\|+\|\mathbf{x}^{k-1}-\mathbf{y}^{k-1}\|\end{aligned}
\end{equation*}
\noindent By combining (\ref{keyineq}) with the above,  it yields that
\begin{equation}\label{keyin}
	\begin{aligned}		
		&\Delta_k\le {2}{\widehat M}\phi(r_k)+\|\mathbf{x}^k-\mathbf{y}^{k-1}\|+\|\mathbf{x}^{k-1}-\mathbf{y}^{k-1}\|\\
		&\le {2}\frac{\widehat M}{\mu}\left(\frac{1}{\mu}(1-\theta)(\frac{1}{\alpha}+L)[\|\mathbf{x}^{k} - \mathbf{y}^{k-1}\|
		+ \|\mathbf{x}^{k-1} - \mathbf{y}^{k-1}\|]\right)^{\frac{1-\theta}{\theta}}\\
		&+[\|\mathbf{x}^{k} - \mathbf{y}^{k-1}\|
		+ \|\mathbf{x}^{k-1} - \mathbf{y}^{k-1}\|].
	\end{aligned}
\end{equation}
Let 
$
\widehat C = \frac{{2}\widehat M}{\mu}\left( \mu^{-1}(1-\theta)\Big(\tfrac{1}{\alpha} + L\Big) \right)^{\frac{1-\theta}{\theta}} + 1.
$
 Thus, (i) is established.\\
%If $\theta \in (0,\tfrac{1}{2}]$, then $\frac{1-\theta}{\theta} \ge 1$. 
%Hence, the desired result follows directly by noting that 
%$\|\x^k - \overline{\x}\| < \Delta_k$. 
%If $\theta \in (\tfrac{1}{2},1)$, then $\frac{1-\theta}{\theta} \le 1$, 
%and the same conclusion still holds.
(ii)
For $\theta \in (0,\tfrac{1}{2}]$, it follows from (\ref{keyin}) that
$
\Delta_k \le \widehat C \, (\Delta_{k-1} - \Delta_k).
$
The remainder of the proof proceeds along the same lines as 
\citep[Theorem 2(ii)]{attouch2009convergence}.\\
\noindent
(iii)
From(\ref{keyin}), we obtain
$
\Delta_k^{\frac{\theta}{1-\theta}} 
\le \widehat C^{\frac{\theta}{1-\theta}} (\Delta_{k-1} - \Delta_k).
$
The desired result then follows by applying the argument of 
\citep[Theorem 2(i)]{attouch2009convergence}.
\qed
\end{proof}

%\begin{remark}From the proof of Theorem \ref{thm:KL_rate}, we see that for 
%%For $\theta\in(\tfrac12,1)$,
%%$$\|\x^k - \overline{\x}\|  \le \widehat C\left(\|\mathbf{x}^{k} - \mathbf{x}^{k-1}\| + \|\mathbf{x}^{k-1} - \mathbf{y}^{k-1}\| \right)^{\frac{1-\theta}{\theta}}.$$
%\end{remark}

\section{Finite Identification via Subspace Selection}
\label{sec:finite}
In this section, we specify a subspace selection strategy to identify the active manifold in finite time. %The following lemma establishes the relationship between $\mathcal{S}^k$ and the support set of the limit point.

\begin{lemma}
	\label{lem:identify_general}
	Suppose that  $\varepsilon_k$ defined in Algorithm \ref{alg:apgcn}  satisfying $\lim_{k\to+\infty}\varepsilon_k=0$. 
Let $\{\x^k\}_{k\ge 0}$ and $\{\mathbf{y}^{k}\}_{k\ge 0}$ be the sequences generated from  Algorithm~\ref{alg:apgcn}. Let $\h x_0=\h y_0$.
Assume that the assumptions in Theorem \ref{thm:global_convegence} hold. Denote $\overline{\x}$ be the limit point of $\{\x^k\}$, define $\overline{\mathcal{S}} = \supp(\overline{\x})$ and $\overline{n} = \|\overline{\mathbf{x}}\|_0$. Then,
	\begin{itemize}
		\item[(i)] there exists an index number $K_0$ such that
		$\overline{\mathcal{S}} \subseteq \mathcal{S}^{k},~ \forall k\ge K_{0};$
		\item[(ii)] for all $k\ge K_0$,
		$\overline n \le n_k\le \min\{{\lfloor \|\x_{k}-\overline \x\|^2}/{\varepsilon_k^2}\rfloor + \overline n,\|\x^k\|_0\}$.
	\end{itemize}
	%		\item[(iii)]
	%		Furthermore, assume that $F$ satisfies the KL property with exponent $\theta \in (0, 1)$, and one of the following conditions hold:
	%		\begin{itemize}
		%			\item[(iii-a)] Take $ \varepsilon_k = \widetilde{C} k^{-\gamma},~\gamma>0,~\widetilde{C}>0$ and $\overline{\x}$ be the limit point associated with $\{ \varepsilon_k\}$. $F$ satisfies KL property at $\overline{\x}$ with exponent $\theta\in(0,\frac{1}{2}]$.
		%			\item[(iii-b)] Take $ \varepsilon_k = \frac{\widetilde{C}}{\log(k+2)^{\gamma}},~\widetilde{C} >0,~\gamma>0,~\widetilde{C}>0$ and $\overline{\x}$ be the limit point associated with $\{ \varepsilon_k\}$. $F$ satisfies KL property at $\overline{\x}$ with exponent $\theta\in(0,1)$.
		%		\end{itemize}
	%		Then, there exists an index $K_1$ such that $\mathcal{S}_k =\overline{\mathcal{S}}, ~\forall k \ge K_1.$

	%		Additionally, if  $\theta \in (0, \tfrac{1}{2}]$, and let $\varepsilon_k = C \rho^{\tau_0 k}$, where $0 < \tau_0 < 1$.
	%		If $\theta \in (\tfrac{1}{2}, 1)$, and let $\varepsilon_k = C k^{-\tau_0 p}$, where $p := \frac{1 - \theta}{2\theta - 1} > 0$ and $0 < \tau_0 < 1$.
	%		Then, there exists an index $K_0$ such that
	%		$\mathcal{S}_k =\overline{\mathcal{S}}, ~\forall k \ge K_0.$
	%		In particular, under condition (i), one can take
	%		$
	%		K_0 = \left\lceil \frac{\log(2C/\rho_{\min})}{\tau_0 \log(1/\rho)} \right\rceil,$
	%		while for (ii), one can take
	%		$
	%		K_0 := \left\lceil \left(\frac{2C}{\rho_{\min}}\right)^{1 / (\tau_0 p)} - 1 \right\rceil.$
	%	\end{itemize}
	\end{lemma}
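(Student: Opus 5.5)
Both parts follow from Theorem~\ref{thm:global_convegence}, which gives $\x^k\to\overline{\x}$, combined with $\varepsilon_k\to 0$ and the definition $\mathcal S^k=\{i:|x_i^k|\ge\varepsilon_k\}$.

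\emph{Step 1: part (i).} Let $\rho_{\min}:=\min_{i\in\overline{\mathcal S}}|\overline x_i|>0$. If $\overline{\mathcal S}=\emptyset$, part (i) is trivial. Since $\x^k\to\overline{\x}$, there is $K'$ with $\|\x^k-\overline{\x}\|_\infty\le\rho_{\min}/2$ for $k\ge K'$. Since $\varepsilon_k\to 0$, there is $K''$ with $\varepsilon_k\le\rho_{\min}/2$ for $k\ge K''$. Set $K_0=\max\{K',K''\}$. Then for every $i\in\overline{\mathcal S}$ and $k\ge K_0$,
\[
|x_i^k|\ \ge\ |\overline x_i|-|x_i^k-\overline x_i|\ \ge\ \rho_{\min}/2\ \ge\ \varepsilon_k ,
\]
so $i\in\mathcal S^k$. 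Hence $\overline{\mathcal S}\subseteq\mathcal S^k$.

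\emph{Step 2: the lower bound and the $\|\x^k\|_0$ bound in (ii).} The lower bound $\overline n\le n_k$ is immediate from (i). For the upper bound by $\|\x^k\|_0$: every $i\in\mathcal S^k$ satisfies $|x_i^k|\ge\varepsilon_k>0$, so $\mathcal S^k\subseteq\supp(\x^k)$.

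\emph{Step 3: the counting bound in (ii).} Write $\mathcal S^k=\overline{\mathcal S}\cup(\mathcal S^k\setminus\overline{\mathcal S})$, using (i). Each $i\in\mathcal S^k\setminus\overline{\mathcal S}$ has $\overline x_i=0$ and $|x_i^k|\ge\varepsilon_k$, so $|x_i^k-\overline x_i|^2\ge\varepsilon_k^2$. Summing over these indices gives
\[
\sharp(\mathcal S^k\setminus\overline{\mathcal S})\,\varepsilon_k^2\ \le\ \|\x^k-\overline{\x}\|^2 .
\]
Since the cardinality is an integer, $\sharp(\mathcal S^k\setminus\overline{\mathcal S})\le\lfloor\|\x^k-\overline{\x}\|^2/\varepsilon_k^2\rfloor$. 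Adding $\overline n$ gives the first term of the minimum, and combining with Step 2 yields (ii).

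There is no genuine obstacle. The only care needed is choosing $K_0$ so that both thresholds (on $\|\x^k-\overline{\x}\|_\infty$ and on $\varepsilon_k$) hold simultaneously, and handling $\overline{\mathcal S}=\emptyset$ separately.
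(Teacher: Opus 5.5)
Your proposal is correct and follows the paper's proof essentially step for step. Part (i) uses the same $\rho_{\min}/2$ threshold argument; you use $\|\cdot\|_\infty$ where the paper uses the Euclidean norm, which makes no difference. Part (ii) uses the same counting bound $\sharp(\mathcal{S}^k\setminus\overline{\mathcal{S}})\,\varepsilon_k^2\le\|\x^k-\overline{\x}\|^2$. Your explicit appeal to $\varepsilon_k>0$ (required in Algorithm~\ref{alg:apgcn}) to obtain $\mathcal{S}^k\subseteq\supp(\x^k)$ is, if anything, cleaner than the paper's remark that the case $\varepsilon_k=0$ ``holds directly''.
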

	
	\begin{proof}
%(i) The proof is similar to \citep[Theorem 4]{li2015global} and \citep[Theorem 6.11]{boct2025full}.\\
(i) If $\overline{\mathcal{S}} = \emptyset$, the result is obvious. Suppose $\overline{\mathcal{S}} \neq \emptyset$ and denote $\rho_{\min} := \min\{|\overline{x}_i|:i\in \overline{\mathcal{S}}\} > 0$.
Let $i\in\overline{\mathcal{S}}$, there exists an index $K_0$ such that $\varepsilon_k\le \rho_{\min}/2$ and $
\|\mathbf{x}^k - \overline{\mathbf{x}}\| \le \rho_{\min}/2$ when $k\ge K_0$.
Thus,
\begin{equation*}
	|x_i^k| \ge\;|\overline{x}_i| - |x_i^k - \overline{x}_i|\ge \rho_{\min} - \|\mathbf{x}^k - \overline{\mathbf{x}}\| \ge\frac{\rho_{\min}}{2}\ge \varepsilon_k,
\end{equation*}
which implies $i\in\mathcal{S}^k$. 
(ii) Let $k\geq K_0$, we may assume $\varepsilon_k >0$, otherwise the inequality holds directly.
Let  $T_{k}:=\mathcal{S}^{k}\setminus\overline{\mathcal S}$.
Clearly,
$\|{\mathbf{x}}^k-\overline {\mathbf{x}}\|^2\ge \sum_{i\in T_{k}} |x_i^k|^2 \ge \sharp(T_{k})\varepsilon^2_k=(n_k-\overline n)\varepsilon_k^2. $
The assertion follows directly. \qed

%	We consider the two cases separately.
%	{Case a:} If $\theta \in (0, \tfrac{1}{2}]$ and $\varepsilon_k = C \rho^{\tau_0 k}$, where $0 < \tau_0 < 1$, invoking Lemma \ref{lemma:SinM_general}, when $k \ge K_0$, we have $\overline{\mathcal{S}} \subseteq \mathcal{S}_k$. Combining this with Theorem \ref{thm:KL_rate}, it follows that $K_0$ is given by
%	$
%	K_0 = \left\lceil \frac{\log(2C/\rho_{\min})}{\tau_0 \log(1/\rho)} \right\rceil.
%	$
%	
%	To prove the inverse inclusion, we need to show that $n_k = \overline n$ for sufficiently large $k$. By Theorem \ref{thm:KL_rate}, we obtain:
%	\[
%	0 \le n_k - \overline n \le \left\lfloor \frac{\|\x^k - \overline \x\|^2}{\varepsilon_k^2} \right\rfloor \le \left\lfloor \rho^{2(1 - \tau_0) k} \right\rfloor = 0, ~ \forall k \ge K_0.
%	\]
%	Therefore, $\mathcal{S}_k \equiv \overline{\mathcal{S}}, ~ \forall k \ge K_0$.
%	{Case b:} If $\theta \in (\tfrac{1}{2}, 1)$ and $\varepsilon_k = C k^{-\tau_0 p}$, where $p := \frac{1 - \theta}{2\theta - 1} > 0$ and $0 < \tau_0 < 1$, we similarly obtain that
%	$
%	K_0 = \left\lceil \left( \frac{2C}{\rho_{\min}} \right)^{1 / (\tau_0 p)} - 1 \right\rceil.$
%	
%	And it holds that:
%	$
%	0 \le n_k - \overline n \le \left\lfloor \frac{\|\x^k - \overline \x\|^2}{\varepsilon_k^2} \right\rfloor \le \left\lfloor k^{-2p(1 - \tau_0)} \right\rfloor = 0, ~ \forall k \ge K_0.
%	$
%	Thus, $\mathcal{S}_k \equiv \overline{\mathcal{S}}, ~ \forall k \ge K_0$.
%	\(\square\)\qed
\end{proof}

%We first consider the case where $F$ satisfies KL property with exponent $\theta\in(0,1)$, see Definition \ref{def:KL_exponent}.

\begin{theorem}
\label{thm:Finite_identify_KL_general}
Suppose that all the assumptions of Theorem~\ref{thm:global_convegence} hold, and that $F$ satisfies the KL property at every point in $\dom(\partial F)$ with exponent $\theta \in (0,1)$.
Let $\{\x^k\}_{k\ge 0}$ and $\{\mathbf{y}^{k}\}_{k\ge 0}$ be the sequences generated from  Algorithm~\ref{alg:apgcn}, with $\h x_0=\h y_0$. Let $\overline{\x}$ denote its limit point with $\overline{\mathcal{S}} := \supp(\overline{\x})$. Take
$
\varepsilon_k = {\widehat{C}_3} \big(\|\mathbf{x}^{k} - \mathbf{y}^{k-1}\| + \|\mathbf{x}^{k-1} - \mathbf{y}^{k-1}\|\big)^{\gamma},
$
where ${\widehat{C}_3}> 0$ and $\gamma \in \big(0, \min(1,(1-\theta)/\theta)\big)$.
Then, for all sufficiently large $k$, it holds that
$
\mathcal{S}^k = \overline{\mathcal{S}}.
$
\end{theorem}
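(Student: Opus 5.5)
Write $e_k := \|\mathbf{x}^{k} - \mathbf{y}^{k-1}\| + \|\mathbf{x}^{k-1} - \mathbf{y}^{k-1}\|$, so that $\varepsilon_k = \widehat C_3 e_k^{\gamma}$, and set $p := \min\bigl(1,(1-\theta)/\theta\bigr)$, so that $0<\gamma<p$. The plan is to combine Lemma~\ref{lem:identify_general} with the error estimate of Theorem~\ref{thm:KL_rate}(i). That estimate controls $\|\mathbf{x}^k-\overline{\mathbf{x}}\|$ by a power of $e_k$ strictly larger than $\gamma$. The point is that the threshold $\varepsilon_k$ tends to zero, but more slowly than the distance to the limit.

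\textbf{Step 1 (the threshold vanishes).} By Theorem~\ref{lem:descent}, see \eqref{adjent}, we have $e_k\to 0$. Hence $\varepsilon_k\to 0$, and the hypothesis of Lemma~\ref{lem:identify_general} holds.

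\textbf{Step 2 (inclusion $\overline{\mathcal S}\subseteq \mathcal S^k$).} Lemma~\ref{lem:identify_general}(i) then gives $\overline{\mathcal S}\subseteq \mathcal S^k$ for all $k\ge K_0$. One degenerate case needs care: if $e_k=0$ then $\varepsilon_k=0$, and every index satisfies $|x_i^k|\ge 0$. With the literal definition, $\mathcal S^k$ would then be all of $\{1,\dots,n\}$. I will handle this by noting that $e_k=0$ forces $\mathbf{x}^k=\mathbf{y}^{k-1}=\mathbf{x}^{k-1}$. Then $\mathbf{x}^k$ is already stationary, and by the KL/descent structure the iterates stay fixed at $\overline{\mathbf{x}}=\mathbf{x}^k$. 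In that case one interprets the threshold rule as selecting $\supp(\mathbf{x}^k)$, which is what the lemma's bound $n_k\le \|\mathbf{x}^k\|_0$ implicitly uses. So we may assume $e_k>0$.

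\textbf{Step 3 (reverse inclusion, the main step).} Take $i\in\mathcal S^k\setminus\overline{\mathcal S}$. Since $\overline{x}_i=0$,
\[
\widehat C_3 e_k^{\gamma}=\varepsilon_k\le |x_i^k| = |x_i^k-\overline{x}_i|\le \|\mathbf{x}^k-\overline{\mathbf{x}}\|\le \widehat C\, e_k^{p},
\]
where the last inequality is Theorem~\ref{thm:KL_rate}(i), valid for all large $k$. Rearranging gives $e_k^{p-\gamma}\ge \widehat C_3/\widehat C>0$. Because $p-\gamma>0$ and $e_k\to 0$, this fails once $k$ is large enough. Hence $\mathcal S^k\setminus\overline{\mathcal S}=\emptyset$ eventually. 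Equivalently, via Lemma~\ref{lem:identify_general}(ii), $n_k-\overline n\le \lfloor \widehat C^2 e_k^{2p}/(\widehat C_3^2 e_k^{2\gamma})\rfloor = 0$ for large $k$.

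\textbf{Expected obstacle.} The main point to check is that Theorem~\ref{thm:KL_rate}(i) is applicable, i.e.\ that the KL property with exponent $\theta$ holds at the specific limit $\overline{\mathbf{x}}$ with the desingularizing function $\phi(s)=\mu^{-1}s^{1-\theta}$; this is supplied by the hypothesis. A second check is that the strict inequality $\gamma<p$ is exactly what is needed. The constant $\widehat C_3$ plays no role beyond being positive. Combining Steps 2 and 3 yields $\mathcal S^k=\overline{\mathcal S}$ for all sufficiently large $k$.
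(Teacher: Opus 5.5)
Your proof is correct and follows essentially the same route as the paper. The inclusion $\overline{\mathcal S}\subseteq\mathcal S^k$ comes from Lemma~\ref{lem:identify_general}(i) because $\varepsilon_k\to 0$, and the reverse inclusion comes from comparing $\varepsilon_k=\widehat C_3 e_k^{\gamma}$ with the bound $\|\x^k-\overline{\x}\|\le \widehat C e_k^{p}$ of Theorem~\ref{thm:KL_rate}(i); the paper phrases this step through the counting estimate of Lemma~\ref{lem:identify_general}(ii), exactly as in your closing remark of Step~3. Your separate treatment of the degenerate case $e_k=0$, where $\varepsilon_k=0$ would literally give $\mathcal S^k=\{1,\dots,n\}$, is extra care the paper does not take; the paper implicitly avoids it through the algorithm's standing requirement $\varepsilon_k>0$.
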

\begin{proof}
	Since $\lim_{k\to\infty}\varepsilon_k = 0$, it follows from Lemma \ref{lem:identify_general} that for sufficiently large $k$, it holds $\overline{\mathcal{S}} \subseteq \mathcal{S}^{k}$.
Next,  by invoking Lemma \ref{lem:identify_general} and Theorem \ref{thm:KL_rate}, we obtain for sufficiently large $k$:
\begin{equation*}
	0\le n_k - \overline n \le \left\lfloor \frac{\|\x^k - \overline \x\|^2}{\varepsilon_k^2} \right\rfloor \nn \le \left\lfloor \frac{\widehat C^2}{{\widehat{C}_3}^2}\left(\|\mathbf{x}^{k} - \mathbf{y}^{k-1}\| + \|\mathbf{x}^{k-1} - \mathbf{y}^{k-1}\|\right)^{2\min\{\frac{1-\theta}{\theta},1\}-2\gamma} \right\rfloor.
\end{equation*}
Thus, $n_k = \overline{n}$ for sufficiently large $k$. \qed
\end{proof}

\noindent If $F$ satisfies the KL property at each point in $\dom(\partial F)$ with exponent $\theta \!\in\! (0,1)$ while  $\theta$ is unknown, 
we may take
$
\varepsilon_k = \mathcal{O}\left( \big(\|\mathbf{x}^{k} - \mathbf{y}^{k-1}\| + \|\mathbf{x}^{k-1} - \mathbf{y}^{k-1}\|\big)^{\gamma_k} \right),
$
where
$
\gamma^k = o\!\left(\frac{1}{\left|\ln\big(\|\mathbf{x}^{k} - \mathbf{y}^{k-1}\| + \|\mathbf{x}^{k-1} - \mathbf{y}^{k-1}\|\big)\right|}\right)
$ at iteration $k$.

\section{Refined Local Convergence}\label{sec:local}
%	Then line 3 in Algorithm \ref{alg:apgcn} equals to:
%	\begin{eqnarray*}
	%		\overline{\mathbf{d}}^{k+1}\in{\arg\min}\left\{\langle \overline{\nabla} F(\mathbf{x}^{k+1}),\mathbf{d}\rangle +\frac{1}{2}\langle (\overline{\nabla}^2 F(\mathbf{x}^{k+1})+\eta I)\mathbf{d},\mathbf{d} \rangle+ \frac{\sigma_{k+1}}{6}||\mathbf{d}||^3\right\}
	%	\end{eqnarray*}
In this section, we further analyze the local convergence.
We assume that $F$ satisfies the conditions of Theorem~\ref{thm:Finite_identify_KL_general}.
Then there exists an index $K_1(\ge K_0)$ such that $\mathcal{S}^k = \overline{\mathcal{S}}$ for all $k \ge K_1$.
Consequently,  the iterates evolve on the identified manifold $\overline{\mathcal{S}}$, and
\begin{equation*}
	\mathbf{g}^k = \nabla_{\overline{\mathcal{S}}} F(\mathbf{x}^k), 
	\qquad
	H^k = \nabla^2_{\overline{\mathcal{S}}} F(\mathbf{x}^k).
\end{equation*}
%	and define the vector $\overline{\mathbf{d}}^k\in\mathbb{R}^n$ as:
%	\begin{equation*}
	%		\overline{\mathbf{d}}^k_{\mathcal{S}} = {\mathbf{d}}_{\mathcal{S}}^k,\ \overline{\mathbf{d}}^k_{\mathcal{S}^{c}} = 0.
	%	\end{equation*}

%Our analysis is more intricate than that of the classical cubic Newton method in the full space. This is due to the fact that, although \( \mathcal{S}^k \equiv \overline{\mathcal{S}} \) for sufficiently large \( k \), the support \( \supp(\mathbf{x}^k) \) does not necessarily coincide with \( \mathcal{S}^k \). Furthermore, the locally Hessian Lipschitz continuity of \( F \) is only valid for points that share the same support. To address this issue, we impose the following additional assumption:

%Our analysis is more delicate than that of the classical cubic Newton method in the full space. This is because the local Lipschitz continuity of the Hessian of $F$ holds locally with the points sharing the same support.
To proceed, we impose the following additional assumption.

\begin{assumption}\label{ass:partial_Hessian_continuous}
	For each ${\mathbf{x}} \in \dom \partial F$, let $\mathcal{S} = \supp(\x)$.
	\begin{enumerate}
		\item[(i)] There exists $\delta_{\mathbf{x}} > 0$ such that the restricted Hessian $\nabla^2_{{\mathcal{S}}} F(\mathbf{z})$ is continuous for all $\mathbf{z} \in \cB({\mathbf{x}}, \delta_{\mathbf{x}})$.\footnote{\noindent Indeed, Assumption~\ref{Ass1}(i) does not imply Assumption~\ref{ass:partial_Hessian_continuous}(i). 
			Consider the function $F: \mathbb{R}^2 \to \mathbb{R}$ defined by $F(\mathbf{x}) = g(\mathbf{x}) + h(\mathbf{x})$, where
			\[
			g(\mathbf{x}) = \frac{1}{6}|x_1|^3 \phi(x_2), 
			\quad 
			h(\mathbf{x}) = \frac{1}{2}(x_1^2 + x_2^2),
			\]
			and $\phi(t) = 1$ if $t > 0$ and $\phi(t) = 0$ if $t \leq 0$.
			%Clearly, for each $\mathbf{x}$ with $x_2\le 0$
			Then we have
			\[
			\mathrm{lip}_{\cal M}\,\nabla_{\mathcal S}^2 F(\x) =
			\begin{cases}
				0, & \text{if } \mathcal{S} = \{1\}, \\
				0, & \text{if } \mathcal{S} = \{2\}, \\
				0, & \text{if } \mathcal{S} = \{1,2\} \text{ and } x_2 < 0, \\
				1, & \text{if } \mathcal{S} = \{1,2\} \text{ and } x_2 > 0.
			\end{cases}
			\]
			Thus, $F$ satisfies Assumption~\ref{Ass1}(i) at each point $\mathbf{x}$.
			%and $\underline{F} = 0$.
			For the point of $\overline{\mathbf{x}} = (1,0)^{\top}$ with $\overline{\mathcal{S}} = \{1\}$, and
			$
			\nabla^2_{[1]} F(x_1,x_2) = x_1 \phi(x_2) + 1,$
			which is {\it not continuous} in any neighborhood of $\overline{\mathbf{x}}$. %Therefore, $F$ does not satisfy Assumption~\ref{ass:partial_Hessian_continuous}.
		}
		\item[(ii)] The Lipschitz modulus $\mathrm{lip}\nabla^2_{\mathcal{S}}F$ is lower semi-continuous at $\mathbf{x}$.
	\end{enumerate}
\end{assumption}

%\begin{remark}
%\textcolor{red}{(i) When $\delta$ is sufficiently small, it holds $\supp(\overline{\x}) \subseteq \supp(\x)$. Consequently, the restricted Hessian $\nabla_{\overline{\mathcal{S}}}^2 F$ is well defined on  $\cB(\overline{\x},\delta)$ when suitably choose $\delta$. (ii) When $g$ is separable, this assumption always holds.}
%\end{remark}

The following {Theorem} demonstrates that the exact cubic Newton step will be adopted, i.e. $\beta^k\equiv 1$ when
$k$ is  sufficiently large.
\begin{theorem}
	\label{lemma:beta_eq1}
	Let $\{\x^k\}_{k\ge 0}$ and $\{\mathbf{y}^{k}\}_{k\ge 0}$ be the sequences generated from  Algorithm~\ref{alg:apgcn}, with $\h x_0=\h y_0$. 
Assume that all assumptions in Theorem \ref{thm:global_convegence} hold. $\overline{\mathcal{X}}$ is the set of all accumulation points of $\{\mathbf{x}^{k}\}$, and assume that ${\bf 0}\not\in\overline{\mathcal{X}}$.
	Then,   $\x^k \to \overline{\mathbf{x}}$ and the following statements hold.
	\begin{itemize}
		\item[(i)] There exists ${\widehat{C}_5} > 0$ such that $\max_{k}\{\|\g^k\|,\|H^k\|,\|\mathbf{d}^k\|\}\le {\widehat{C}_5}$.\\
		Furthermore, suppose that Assumption \ref{ass:partial_Hessian_continuous} holds, then
		\item[(ii)] there exists a constant $\underline{\beta}>0$ such that $\beta_k\ge\underline{\beta}$ when $k$ is sufficiently large.
		\item[(iii)]  ${\mathbf{d}}^k\to 0$ as $k\to +\infty$.
		\item[(iv)]  there exists an index $\widehat K$ such that $\beta_k= 1$ when $k\ge\widehat K$.
		\item[(v)] It holds that
		\begin{equation}\label{key2}
			\|\nabla^2_{\overline{\mathcal{S}}}F(\x^k + t\mathbf{d}^k)- \nabla^2_{\overline{\mathcal{S}}}F(\x^k)\| \le \eta/2  + M(\overline{\x})t\|\mathbf{d}^k\|,~\forall t\in [0,1] 
		\end{equation}
		when $k\ge\widehat K$, where $M(\overline{\x}) = \frac{\underline{\omega} + 1}{2}\,\mathrm{lip}\,\nabla^2_{\overline{\mathcal{S}}}F(\overline{\x})$.
	\end{itemize}
\end{theorem}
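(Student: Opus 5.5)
Convergence $\x^k\to\overline{\x}$ is already given by Theorem~\ref{thm:global_convegence}. By Theorem~\ref{thm:Finite_identify_KL_general} there is $K_1$ with $\mathcal S^k=\overline{\mathcal S}$ for $k\ge K_1$. Since ${\bf 0}\notin\overline{\mathcal X}$, we have $\overline{\mathcal S}\neq\emptyset$, so the Newton branch is active for all large $k$. Moreover, for large $k$, $\supp(\x^k)\supseteq\overline{\mathcal S}$ and $|x^k_i|\ge\rho_{\min}/2$ on $\overline{\mathcal S}$. Assumption~\ref{ass:partial_Hessian_continuous}(i) at $\overline{\x}$ gives continuity of $\nabla^2_{\overline{\mathcal S}}F$ on $\cB(\overline{\x},\delta_{\overline{\x}})$. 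This also gives continuity of $\nabla_{\overline{\mathcal S}}F$ there: $F=g+h$ is $C^1$ in the $\overline{\mathcal S}$ coordinates on this ball, with the Hessian as its derivative.

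\textbf{(i)} Continuity on the compact ball $\cB(\overline{\x},\delta_{\overline{\x}}/2)$ bounds $\|\g^k\|$ and $\|H^k\|$ for large $k$. There are only finitely many remaining indices, each with finite values. For $\mathbf d^k$, take the inner product of \eqref{gsub} with $\mathbf d^k$ and use \eqref{Hsub}:
\[
\frac{\sigma_k}{2}\|\mathbf d^k\|^3\le \|\g^k\|\|\mathbf d^k\|+\Bigl(\|H^k\|+\eta+\frac{\sigma_k}{2}\|\mathbf d^k\|\Bigr)\|\mathbf d^k\|^2 .
\]
This is not quite enough as written. A cleaner route is to compare the model value at $\mathbf d^k$ with its value at $0$, which is zero:
\[
\frac{\sigma_k}{6}\|\mathbf d\|^3-\frac{\|H^k\|+\eta}{2}\|\mathbf d\|^2-\|\g^k\|\|\mathbf d\|\le 0 .
\]
With $\sigma_k\ge 2c/3>0$, this bounds $\|\mathbf d^k\|$ in terms of $\|\g^k\|$ and $\|H^k\|$.

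\textbf{(ii)} Fix $\omega\in(1,\underline\omega)$. By Assumption~\ref{ass:partial_Hessian_continuous}(ii), for $k$ large, $\mathrm{lip}\nabla^2_{\overline{\mathcal S}}F(\x^k)\ge\frac{1+\underline\omega}{2\underline\omega}\mathrm{lip}\nabla^2_{\overline{\mathcal S}}F(\overline{\x})$ up to a small slack. Hence $M(\x^k)\ge M(\overline{\x})$, and $\sigma_k\ge 2(M(\overline{\x})+c)/3$. Uniform continuity of $\nabla^2_{\overline{\mathcal S}}F$ on the compact ball gives $r>0$ such that
\[
\|\nabla^2_{\overline{\mathcal S}}F(\z)-\nabla^2_{\overline{\mathcal S}}F(\x^k)\|\le\eta/2 \quad\text{whenever } \|\z-\x^k\|\le r .
\]
Steps along $\mathbf d^k$ keep the $\overline{\mathcal S}$ coordinates at magnitude at least $\rho_{\min}/4$ whenever $\beta\|\mathbf d^k\|\le\rho_{\min}/4$, so the support is preserved. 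Apply Lemma~\ref{lem:inexact_cubic_upper} with $a=\eta/2$ and $M=0$, valid for $\beta\|\mathbf d^k\|\le\min(r,\rho_{\min}/4)$. Redoing the estimate \eqref{key11} with the extra $\frac{\eta\beta^2}{4}\|\mathbf d^k\|^2$ term yields the acceptance test with $\eta/4$. Because $\|\mathbf d^k\|\le\widehat C_5$, the threshold $\widehat\beta:=\min\{1,\min(r,\rho_{\min}/4)/\widehat C_5\}$ is uniform. Backtracking therefore stops with $\beta_k\ge q\widehat\beta=:\underline\beta$.

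\textbf{(iii)} From \eqref{adjent}, $\|\y^k-\x^k\|=\beta_k\|\mathbf d^k\|\to 0$. Together with $\beta_k\ge\underline\beta$ this gives $\mathbf d^k\to0$.

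\textbf{(iv)} and \textbf{(v)} These follow together. Once $\|\mathbf d^k\|\le\min(r',\rho_{\min}/4)$, all points $\x^k+t\mathbf d^k$ with $t\in[0,1]$ lie in a small ball around $\overline{\x}$ on the common support. The main obstacle is producing the bound \eqref{key2}, which mixes an $\eta/2$ continuity slack with an $M(\overline{\x})$ Lipschitz part. The difficulty is that $\supp(\x^k)$ may strictly contain $\overline{\mathcal S}$, so the Lipschitz modulus relative to $\mathcal M$ at $\overline{\x}$ does not apply directly. I would therefore take the $\eta/2$ term from uniform continuity, which alone already gives \eqref{key2}, with the $M(\overline{\x})t\|\mathbf d^k\|$ term only loosening it. Then Lemma~\ref{lem:inexact_cubic_upper} with $a=\eta/2$ and $M=M(\overline{\x})\le M(\x^k)$ at $\beta=1$, combined with \eqref{gsub}--\eqref{Hsub}, gives
\[
F(\x^k+\mathbf d^k)\le F(\x^k)-\frac{\eta}{4}\|\mathbf d^k\|^2-\frac{c}{6}\|\mathbf d^k\|^3 .
\]
The quadratic coefficient is $\eta-\eta/2-\eta/4$ after the analogous algebra with $\beta=1$. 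Hence $j_k=0$ and $\beta_k=1$.
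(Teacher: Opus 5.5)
Your proof is correct, and for (ii), (iv) and (v) it takes a simpler route than the paper's. The paper splits $\nabla^2_{\overline{\mathcal S}}F(\x^k+t\beta\mathbf d^k)-\nabla^2_{\overline{\mathcal S}}F(\x^k)$ through the projection $\hat{\x}^k$ of $\x^k$ onto $\overline{\mathcal M}$. It bounds the two outer pieces by $\eta/4$ each via uniform continuity, and the middle piece by the Lipschitz modulus $M(\overline{\x})$ on $\overline{\mathcal M}$. It then applies Lemma~\ref{lem:inexact_cubic_upper} with $a=\eta/2$ and $M=M(\overline{\x})$, which is why it needs Assumption~\ref{ass:partial_Hessian_continuous}(ii) to get $\sigma_k\ge 2(M(\overline{\x})+c)/3$.

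You instead bound the whole difference by $\eta/2$ through uniform continuity and take $M=0$. The cubic part of the acceptance test then only requires $\sigma_k\ge 2c\beta/3$, which $\sigma_k\ge 2c/3$ supplies. Statement (v) is then immediate, since $M(\overline{\x})t\|\mathbf d^k\|\ge 0$. This is legitimate because Assumption~\ref{ass:partial_Hessian_continuous}(i) gives continuity on a full ball, not only on $\overline{\mathcal M}$. In fact, the paper's own $\overline\delta$-estimate applied directly to $\x^k$ and $\x^k+t\beta\mathbf d^k$ (both in $\cB(\overline{\x},\overline\delta)$) already yields $\eta/4$, so the projection and the Lipschitz term only loosen the bound.

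The paper's form keeps (v) shaped like the classical cubic-Newton estimate that Theorem~\ref{thm:local_complexity} consumes. Your route shows that (ii), and also (iv) with $M=0$ at $\beta=1$, need neither the Lipschitz modulus on $\overline{\mathcal M}$ nor the lower semicontinuity of Assumption~\ref{ass:partial_Hessian_continuous}(ii). So the lower-semicontinuity step in your (ii) is dead weight, as are the unused $\omega\in(1,\underline\omega)$ and the vague ``small slack''. In (iv) you could equally take $M=0$ at $\beta=1$.

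There is one caveat on (i). You bound $\g^k$ and $H^k$ using Assumption~\ref{ass:partial_Hessian_continuous}(i), which the statement imposes only from (ii) onward; the paper attributes these bounds to coercivity alone, just as tersely.

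More importantly, continuity of $\nabla^2_{\overline{\mathcal S}}F$ on the ball controls $\nabla_{\overline{\mathcal S}}F$ only along $\overline{\mathcal S}$-directions. It says nothing about the off-support coordinates that $\x^k$ may still carry. For example, take $g(\x)=|x_1|$ when $x_2\le 0$ and $g(\x)=2|x_1|$ when $x_2>0$. Near $(1,0)^\top$ the restricted Hessian $\nabla^2_{\{1\}}F$ is that of $h$, hence continuous, yet $\nabla_{\{1\}}F$ jumps across $x_2=0$. So your claim that this ``also gives continuity of $\nabla_{\overline{\mathcal S}}F$'' does not follow, and the bound on $\g^k$ needs a separate argument.
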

\begin{proof}
	(i)
	Since $\lim_{k\to \infty}\varepsilon_k = 0$ and $\overline{\mathbf{x}}\neq {\bf 0}$, it must hold that $\mathcal{S}^k\neq\emptyset$ for sufficiently large $k$. Without loss of generality, we assume that $\mathcal{S}^k\neq \emptyset$ for all $k$.
	Assumption~\ref{Ass2} implies that there exists a constant ${\widehat{C}_4} > 0$ such that $\max_k\{\|\mathbf{g}^k\|,\|H^k\|\}\le {\widehat{C}_4}$. Define
	$
	\varphi_k(\mathbf{d}) = \langle \mathbf{g}^{k},\mathbf{d}\rangle + \frac{1}{2}\langle (H^{k}+\eta I)\mathbf{d}, \mathbf{d} \rangle + \frac{\sigma_{k}}{6}\|\mathbf{d}\|^3.
	$
	It is bounded below by
	$
	\underline{\varphi}(\mathbf{d}) = -{\widehat{C}_4} \|\mathbf{d}\| - \frac{{\widehat{C}_4}}{2} \|\mathbf{d}\|^2 + \frac{c}{9} \|\mathbf{d}\|^3.
	$
	So, $\{\mathbf{d}^k\}\subseteq \{\mathbf{d}:\underline{\varphi}(\mathbf{d})\le 0\}$. Consequently, $\{\mathbf{d}^k\}$ is bounded. There exists ${\widehat{C}_5} \,(\ge {\widehat{C}_4})$ such that $\max_{k}\{\|\mathbf{g}^k\|,\|H^k\|,\|\mathbf{d}^k\|\}\le {\widehat{C}_5}$.
	
	(ii) By Assumption~\ref{ass:partial_Hessian_continuous}, for $\overline{\x}$, there exists $\delta_{\overline \x}\in(0,\min_{i\in\overline{\mathcal{S}}} |\overline{x}_i|)$ such that $\nabla^2_{\overline{\mathcal{S}}}F(\x)$ is continuous on $\cB(\overline{\x},\delta_{\overline \x})$, and thus uniformly continuous. For  $M(\overline{\x}) = \frac{\underline{\omega} + 1}{2}\,\mathrm{lip}\,\nabla^2_{\overline{\mathcal{S}}}F(\overline{\x})$, 
	there exists $\delta_1>0$,  $\nabla^2_{\overline{\mathcal{S}}}F(\x)$ is Lipschitz continuous on $\cB(\overline{\x},\delta_1)\cap\overline{\mathcal{M}}$
	($\overline{\mathcal{M}}=\{\mathbf{x}:{\text{supp}}(\mathbf{x})=\text{supp}(\overline{\mathbf{x}})\}$) with modulus $M(\overline{\x})$.
	For the $\eta>0$ given in Algorithm~\ref{alg:apgcn}, there exists $0< \overline{\delta} \le \min(\delta_{\overline \x},\delta_1)$ such that
	\begin{equation*}
		\label{ineq:partial_Hessian_uniformly_continuous}
		\|\nabla^2_{\overline{\mathcal{S}}}F(\x) - \nabla^2_{\overline{\mathcal{S}}}F(\y)\| \le \eta/4,~\forall \x,\y\in\cB(\overline{\x},\overline{\delta}).
	\end{equation*}
	
	\noindent There exists $K_2$ such that $k\ge K_2$, $\x^k\in \cB(\overline{\x},\overline{\delta}/2)$.
	Define $\hat{\x}^k = {\text{Proj}}_{\overline{\mathcal{M}}}({\x}^k)$, where ${\text{Proj}}_{\overline{\mathcal{M}}}$ is the projection operator onto $\overline{\mathcal{M}}$.
	So, $\hat\x^k\in \cB(\overline{\x},\overline{\delta}/2)$ when $k\ge K_2$.
	Define $\widehat\beta = \min\{\overline{\delta}/(2{{\widehat{C}_5}}),1\}$. 
	We have that \begin{equation*}
		\label{suppxk}\supp(\x^k + t\beta \mathbf{d}^k) = \supp(\x^k)\end{equation*}
		 for $\beta\in(0,\widehat\beta]$ and  $t\in [0,1]$. 
	%Thus,   $\beta_k\ge \widehat\beta q$.
	Note that
	\begin{equation*}
		\label{ineq: inexact_Hessian_Lipschitz}
		\begin{aligned}
			&\|\nabla^2_{\overline{\mathcal{S}}}F(\x^k + t\beta\mathbf{d}^k)- \nabla^2_{\overline{\mathcal{S}}}F(\x^k)\|
			\le\underbrace{\|\nabla^2_{\overline{\mathcal{S}}}F({\x}^k + t\beta\mathbf{d}^k)- \nabla^2_{\overline{\mathcal{S}}}F(\hat{\x}^k + t\beta\mathbf{d}^k)\|}_{\textcircled{\small{1}}}  \nn\\
			&+\underbrace{ \|\nabla^2_{\overline{\mathcal{S}}}F(\hat{\x}^k + t\beta\mathbf{d}^k)- \nabla^2_{\overline{\mathcal{S}}}F(\hat{\x}^k)\|}_{\textcircled{\small{2}}}  + \underbrace{\|\nabla^2_{\overline{\mathcal{S}}}F(\hat{\x}^k)- \nabla^2_{\overline{\mathcal{S}}}F(\x^k)\|}_{\textcircled{\small{3}}}.
			%& \le \eta/4 + \eta/4 + |\nabla^2{\overline{\mathcal{S}}}F(\hat{\x}^k + t\beta\mathbf{d}^k)- \nabla^2_{\overline{\mathcal{S}}}F(\hat{\x}^k)| \
			%& \le \eta/2 + M(\overline{\x})|t\beta\mathbf{d}^k|.
		\end{aligned}
	\end{equation*}
	Since \begin{equation}\label{text1}{\x}^k + t\beta\mathbf{d}^k,\hat{\x}^k + t\beta\mathbf{d}^k,\hat{\x}^k,{\x}^k\in\cB(\overline{\x},{\overline\delta}),\; \hat{\x}^k + t\beta\mathbf{d}^k,\hat{\x}^k\in\cB(\overline{\x},\delta_1)\cap\overline{\mathcal{M}},\end{equation} then $\textcircled{\small{1}}\le \eta/4$,
	and $\textcircled{\small{3}}\le \eta/4.$
	Also, $\textcircled{\small{2}}\le M(\overline{\x})\|t\beta\mathbf{d}^k\|.$
	So,
	\begin{equation*}
		\|\nabla^2_{\overline{\mathcal{S}}}F(\x^k + t\beta\mathbf{d}^k)- \nabla^2_{\overline{\mathcal{S}}}F(\x^k)\|
		\le \eta/2 + M(\overline{\x})\|t\beta\mathbf{d}^k\|.
	\end{equation*}

	\noindent Using Lemma \ref{lem:inexact_cubic_upper} with $a = \eta/2$ and $M = M(\overline{\x})$ and $\mathbf{d} =\beta{\mathbf{d}}^k$, we have
	\begin{equation*}
		F(\mathbf{x}^k+\beta\mathbf{d}^k) \le F(\mathbf{x}^k) + \beta\langle \g^k, \mathbf{d}^k\rangle + \frac{\beta^2}{2} \langle H^k\mathbf{d}^k,\mathbf{d}^k\rangle + \frac{\eta\beta^2}{4}\|\mathbf{d}^k\|^2+ \frac{M(\overline{\x})\beta^3}{6}\|\mathbf{d}^k\|^3.
	\end{equation*}
	
	\noindent Moreover,  by Assumption \ref{ass:partial_Hessian_continuous}(ii),  $\mathrm{lip}\nabla^2_{\overline{\mathcal{S}}}F$ is lower semi-continuous at $\overline{\x}$, thus for any $\underline{\omega} >1$, we have $\mathrm{lip}\nabla^2_{\overline{\mathcal{S}}}F(\x^k)\geq \frac{1+\underline{\omega}}{2\underline{\omega}}\mathrm{lip}\nabla^2_{\overline{\mathcal{S}}}F(\overline{\x})$ when $k$ is sufficiently large. Therefore
	\begin{equation*}
		\begin{aligned}
			\sigma_{k}\geq \frac{2(\omega_k\mathrm{lip}\nabla^2_{{\mathcal{S}^k}}F(\x^k) + c)}{3} \overset{(a)}{\geq} \frac{2(\underline{\omega}\mathrm{lip}\nabla^2_{\overline{\mathcal{S}}}F(\x^k) + c)}{3}\overset{(b)}{\geq} \frac{2(M(\overline{\x})+c)}{3},
		\end{aligned}
	\end{equation*}
	where $(a)$ follows from $\mathcal{S}^k = \overline{\mathcal{S}}$ and $\omega_k\geq \underline{\omega}$, $(b)$ follows from the lower semi-continuity of $\mathrm{lip}\nabla^2_{\overline{\mathcal{S}}}F$ and the definition of $M(\overline{\x})$.
	
	Analogous to the proof of (\ref{key11}) in Theorem \ref{Newton_decrease},  we obtain that
	\begin{equation*}
		F(\mathbf{x}^k+\beta \mathbf{d}^k)\le F(\mathbf{x}^{k})-\frac{\eta\beta^2}{4}\big\|\mathbf{d}^{k}\big\|^2-\frac{c\beta^3}{6}\big\|\mathbf{d}^{k}\big\|^3,
	\end{equation*}
	thus $\beta$ is an acceptable stepsize.
	Therefore, by setting $\underline{\beta}=\widehat\beta q$, the assertion (ii) follows.\\
	(iii) 
	Since $||\mathbf{x}^k - \mathbf{y}^k||\to 0$, we have
	\begin{equation*}
		||\mathbf{d}^k||= \frac{||\mathbf{x}^k - \mathbf{y}^k||}{\beta_k}\le \frac{||\mathbf{x}^k - \mathbf{y}^k||}{\underline{\beta}}\to 0.
	\end{equation*}
	Thus, (iii) is valid.\\
	(iv) and (v). Since $\mathbf{d}^k\to 0$, (\ref{text1}) hold with $\beta_k = 1$ when $k\ge\widehat K$. Proceeding along the line of argument in (ii), the conclusion follows immediately. \qed
\end{proof}

\begin{corollary}\label{thm:approximate_second_order_stationary} 
Let $\{\x^k\}_{k\ge 0}$ and $\{\mathbf{y}^{k}\}_{k\ge 0}$ be the sequences generated from  Algorithm~\ref{alg:apgcn}, with $\h x_0=\h y_0$.  
Suppose that all the assumptions in Theorem \ref{thm:global_convegence} hold.  Moreover, Assumption \ref{ass:partial_Hessian_continuous} holds and $\sup_k \sigma_k := \overline{\sigma} < \infty$. Let $\eta>0$ be given in Algorithm \ref{alg:apgcn}.
	Then $\overline{\mathbf{x}}$ satisfies
	\begin{equation*}
		||\nabla_{\overline{\mathcal{S}}} F(\overline{\mathbf{x}})||= 0, \nabla^2_{\overline{\mathcal{S}}} F(\overline{\mathbf{x}})\succcurlyeq -\eta I.
	\end{equation*}
	%where $\overline{\mathbf{g}}=\nabla_{\overline{\mathcal{S}}} F(\overline{\mathbf{x}})$ and $\overline{H}=\nabla^2_{\overline{\mathcal{S}}} F(\overline{\mathbf{x}})$.
	
\end{corollary}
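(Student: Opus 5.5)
The plan is to pass to the limit in the optimality system \eqref{eq:opt1} of the subspace cubic subproblem \eqref{subd}, using the results of Theorem~\ref{lemma:beta_eq1}. Throughout I work under the standing setting of this section. By Theorem~\ref{thm:Finite_identify_KL_general} there is $K_1$ with $\mathcal{S}^k=\overline{\mathcal{S}}$ for $k\ge K_1$. Hence, for such $k$,
\[
\g^k=\nabla_{\overline{\mathcal{S}}}F(\x^k),\qquad H^k=\nabla^2_{\overline{\mathcal{S}}}F(\x^k).
\]
By Theorem~\ref{thm:global_convegence}, $\x^k\to\overline{\x}$. Assuming $\overline{\x}\neq\mathbf 0$, as in Theorem~\ref{lemma:beta_eq1}, part (iii) of that theorem gives $\mathbf{d}^k\to 0$. (If $\overline{\mathcal S}=\emptyset$, the statement is vacuous.)

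\textbf{Step 1: continuity of the restricted derivatives.} By Assumption~\ref{ass:partial_Hessian_continuous}(i), $\nabla^2_{\overline{\mathcal{S}}}F$ is continuous on $\cB(\overline{\x},\delta_{\overline{\x}})$. For large $k$ we have $\x^k$ in this ball, so
\[
H^k\to\overline H:=\nabla^2_{\overline{\mathcal{S}}}F(\overline{\x}).
\]
For the gradient, I expect $\nabla_{\overline{\mathcal S}}F$ to be continuous there as well. This should follow from restricting $g$ to the coordinates in $\overline{\mathcal S}$ with the others fixed: the restricted Hessian exists and is continuous, so the restricted gradient is $C^1$ in those coordinates. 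If a cleaner route is needed, I would instead compare $\nabla_{\overline{\mathcal S}}F(\x^k)$ with a subgradient coming from Lemma~\ref{thm:subgradient_bound}, whose distance to zero tends to $0$. Either way,
\[
\g^k\to\overline{\g}:=\nabla_{\overline{\mathcal{S}}}F(\overline{\x}).
\]

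\textbf{Step 2: first-order part.} From \eqref{gsub},
\[
\|\g^k\|\le \|H^k+\eta I\|\,\|\mathbf{d}^k\|+\tfrac{\overline{\sigma}}{2}\|\mathbf{d}^k\|^2 .
\]
Theorem~\ref{lemma:beta_eq1}(i) bounds $\|H^k\|$ by $\widehat C_5$, $\sigma_k\le\overline\sigma$, and $\mathbf{d}^k\to0$. Hence the right-hand side tends to $0$, and with Step 1 this gives $\nabla_{\overline{\mathcal{S}}}F(\overline{\x})=0$.

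\textbf{Step 3: second-order part.} From \eqref{Hsub},
\[
H^k+\eta I+\tfrac{\sigma_k}{2}\|\mathbf{d}^k\|I\succeq0 .
\]
Since $\sigma_k\|\mathbf{d}^k\|\le\overline\sigma\|\mathbf{d}^k\|\to0$ and $H^k\to\overline H$, closedness of the positive semidefinite cone gives $\overline H+\eta I\succeq 0$, which is the claimed curvature condition.

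\textbf{Main obstacle.} The delicate point is Step 1, namely continuity of $\nabla_{\overline{\mathcal S}}F$ and $\nabla^2_{\overline{\mathcal S}}F$ along $\x^k$. The support of $\x^k$ may strictly contain $\overline{\mathcal S}$ even though $\mathcal S^k=\overline{\mathcal S}$. This is exactly the situation that Assumption~\ref{ass:partial_Hessian_continuous}(i) is designed to cover, as the footnote counterexample shows. The bound $\sup_k\sigma_k<\infty$ is used only to kill the term $\tfrac{\sigma_k}{2}\|\mathbf{d}^k\|$ in both limits.
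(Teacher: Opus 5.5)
Your Steps 2 and 3 are the paper's proof. It bounds $\|\g^k\|\le(\widehat C_5+\eta)\|\mathbf d^k\|+\frac{\overline\sigma}{2}\|\mathbf d^k\|^2$ via \eqref{gsub}, reads $H^k\succeq-(\eta+\frac{\overline\sigma}{2}\|\mathbf d^k\|)I$ off \eqref{Hsub}, and lets $k\to\infty$. Your Hessian limit is fine as stated, because Assumption~\ref{ass:partial_Hessian_continuous}(i) gives continuity of $\nabla^2_{\overline{\mathcal S}}F$ on a full ball around $\overline\x$.

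The gap is the gradient limit $\g^k\to\nabla_{\overline{\mathcal S}}F(\overline\x)$. The paper also uses it without comment, and your Step 1 does not justify it. Continuity of $\nabla^2_{\overline{\mathcal S}}F$ only makes $\nabla_{\overline{\mathcal S}}F$ continuously differentiable along slices on which the coordinates outside $\overline{\mathcal S}$ are frozen. It says nothing about how $\nabla_{\overline{\mathcal S}}F$ changes when those coordinates move. Since $\mathcal S^k=\overline{\mathcal S}$ does not force $\supp(\x^k)=\overline{\mathcal S}$, the iterates may approach $\overline\x$ across slices. For example, take $g(\x)=|x_1|$ for $x_2\neq0$ and $g(\x)=0$ for $x_2=0$. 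Then $\nabla^2_{\{1\}}g\equiv0$ near $(1,0)^\top$, yet $\nabla_{\{1\}}g(1,x_2)$ equals $1$ for $x_2\neq0$ and $0$ for $x_2=0$. Your fallback through Lemma~\ref{thm:subgradient_bound} also only controls quantities at $\x^k$ (Step 2 already gives $\g^k\to0$), so it needs the same continuity to reach $\overline\x$.

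You can close this under the stated hypotheses by bypassing $\g^k$ altogether:
\begin{itemize}
\item Pass to the limit in the prox inequality defining $\x^k$, using lower semicontinuity of $g$ on the left and $\x^k,\y^{k-1}\to\overline\x$. This gives $\overline\x\in\prox_{\alpha g}(\overline\x-\alpha\nabla h(\overline\x))$.
\item Test this with $\overline\x+t\mathbf e$, where $\supp(\mathbf e)\subseteq\overline{\mathcal S}$. This yields $g(\overline\x+t\mathbf e)-g(\overline\x)+t\langle\nabla h(\overline\x),\mathbf e\rangle+\frac{t^2}{2\alpha}\|\mathbf e\|^2\ge0$.
\item The map $t\mapsto F(\overline\x+t\mathbf e)$ is differentiable at $0$. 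Dividing by $t$ and letting $t\to0^{\pm}$ gives $\nabla_{\overline{\mathcal S}}F(\overline\x)=0$ directly.
\end{itemize}
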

\begin{proof}
	%Invoking the optimal condition of $\mathbf{d}^{k}$, we have:
	%\begin{align}
	%& \g^{k} + H^{k}\mathbf{d}^{k} + \frac{\sigma_{k}}{2}||\mathbf{d}^{k}||\mathbf{d}^{k} + \eta \mathbf{d}^{k} = 0. \label{fopt}\\
	%&H^{k} \succcurlyeq -(\eta + \frac{{\sigma_k}}{2}||\mathbf{d}^{k}||)I.\label{sopt}
	%\end{align}
	%Therefore:
	From (\ref{gsub}) and (\ref{Hsub}),
	\begin{equation*}
		|| \g^{k}|| \le ({\widehat{C}_5}+\eta)||\mathbf{d}^{k}|| + \frac{\overline{\sigma}}{2}||\mathbf{d}^{k}||^2,~H^{k} \succcurlyeq -(\eta + \frac{\overline{\sigma}}{2}||\mathbf{d}^{k}||)I
	\end{equation*}
	where ${\widehat{C}_5}>0$ is defined in Theorem \ref{lemma:beta_eq1}.
	Let $k\to\infty$, we have $||\overline{\mathbf{g}}|| = 0$ and $\overline{H} \succcurlyeq -\eta I$.\qed
\end{proof}

%Next, we will study the local convergence behaviors of Algorithm \ref{alg:apgcn}.
%\begin{definition}
%	Let $\widehat{F}:\mathbb{R}^n\to\mathbb{R}$ be a twice differentiable function. Given $\mathbf{x}\in\mathbb{R}^n$ and $\varepsilon_g,\varepsilon_H>0$, we say that $\mathbf{x}$ is an $(\varepsilon_g,\varepsilon_H)$-approximate second-order stationary point of $\widehat{F}$ if
%	\[
%	\|\nabla \widehat{F}(\mathbf{x})\|\le \varepsilon_g
%	\quad\text{and}\quad
%	\lambda_{\min}\bigl(\nabla^2 \widehat{F}(\mathbf{x})\bigr)\ge -\sqrt{\varepsilon_H}.
%	\]
%\end{definition}
%
%
%\noindent The following theorem aims to show that the local complexity for the sequence $\{\y^k\}$ to reach an $(\varepsilon_g,\varepsilon_H)$-approximate second-order stationary point of the restricted function $F$ is $\mathcal{O}(\max\{\varepsilon_g^{-\frac{3}{2}},\varepsilon_H^{-\frac{3}{2}}\})$.
%To proceed,  we further introduce
%$
%	\mathbf{g}^k_{\mathbf{y}} = \nabla_{\overline{\mathcal{S}}} F(\mathbf{y}^k),\ H^k_{\mathbf{y}} = \nabla^2_{\overline{\mathcal{S}}} F(\mathbf{y}^k).
%$
\noindent The following theorem aims to show that the local complexity for the sequence $\{\y^k\}$ to reach an $(\varepsilon_g,\varepsilon_H)$-approximate second-order stationary point is $\mathcal{O}(\max\{\varepsilon_g^{-\frac{3}{2}},\varepsilon_H^{-\frac{3}{2}}\})$.
To proceed,  we further introduce
$
	\mathbf{g}^k_{\mathbf{y}} = \nabla_{\overline{\mathcal{S}}} F(\mathbf{y}^k),\ H^k_{\mathbf{y}} = \nabla^2_{\overline{\mathcal{S}}} F(\mathbf{y}^k).
$

\begin{theorem}\label{thm:local_complexity} Let $\{\x^k\}_{k\ge 0}$ and $\{\mathbf{y}^{k}\}_{k\ge 0}$ be the sequences generated from  Algorithm~\ref{alg:apgcn}, with $\h x_0=\h y_0$.  
	Suppose that all assumptions in Theorem~\ref{thm:global_convegence} hold.
	In addition, assume that Assumption~\ref{ass:partial_Hessian_continuous} holds and 
	$\sup_{k} \sigma_k := \overline{\sigma} < \infty$.
	Let $\varepsilon_g, \varepsilon_H > 0$ and choose
	\[
	0 < \eta \le \min\left\{
	\sqrt{\frac{(M(\overline{\x})+\overline{\sigma})\,\varepsilon_g}{9}},
	\;\;
	\frac{2\overline{\sigma}+2M(\overline{\x})}{6\overline{\sigma}+9M(\overline{\x})}
	\sqrt{\varepsilon_H}
	\right\},
	\]
	where $M(\overline{\x})$ is defined in Theorem \ref{lemma:beta_eq1}.
	%\[
	%M(\overline{\x}) := \frac{(\underline{\omega} + 1)}{2}
	%\,\mathrm{lip}_{\overline{\mathcal{S}}}\nabla^2 F(\overline{\x}).
	%\]
	Then, the iteration complexity required to obtain an $(\varepsilon_g,\varepsilon_H)$-approximate
	second-order stationary point is bounded by
	\[
	\widehat K+ \mathcal{O}\!\left(\max\left\{\varepsilon_g^{-\frac{3}{2}},\,
	\varepsilon_H^{-\frac{3}{2}}\right\}\right),
	\]
	where $\widehat K$ is defined in Theorem \ref{lemma:beta_eq1}.
\end{theorem}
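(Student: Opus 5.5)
Beyond $k\ge\widehat K$ we have $\mathcal S^k=\overline{\mathcal S}$, $\beta_k=1$, so $\y^k_{\overline{\mathcal S}}=\x^k_{\overline{\mathcal S}}+\mathbf{d}^k$, and the Hessian estimate \eqref{key2} holds. The plan follows the classical Nesterov--Polyak argument, carried out on the identified subspace. We bound the restricted first- and second-order quantities at $\y^k$ by powers of $\|\mathbf{d}^k\|$, and then use the sufficient decrease to show that $\|\mathbf{d}^k\|$ cannot stay large for many iterations.

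\emph{Gradient and Hessian bounds at $\y^k$.} Apply \eqref{ineq:grad_var_bound} of Lemma~\ref{lem:inexact_cubic_upper} with $t=1$, $a=\eta/2$, $M=M(\overline{\x})$, and combine it with the optimality condition \eqref{gsub}:
\[
\|\g^k_{\y}\|\le \|\g^k_{\y}-\g^k-H^k\mathbf{d}^k\|+\|\eta\mathbf{d}^k+\tfrac{\sigma_k}{2}\|\mathbf{d}^k\|\mathbf{d}^k\|
\le \tfrac{3\eta}{2}\|\mathbf{d}^k\|+\tfrac{M(\overline{\x})+\overline{\sigma}}{2}\|\mathbf{d}^k\|^2 .
\]
For the Hessian, combine \eqref{key2} at $t=1$ with \eqref{Hsub}:
\[
H^k_{\y}\succcurlyeq H^k-(\tfrac{\eta}{2}+M(\overline{\x})\|\mathbf{d}^k\|)I\succcurlyeq -\bigl(\tfrac{3\eta}{2}+(\tfrac{\overline{\sigma}}{2}+M(\overline{\x}))\|\mathbf{d}^k\|\bigr)I .
\]
Using $ab\le$ (Young's inequality), the term $\tfrac{3\eta}{2}\|\mathbf{d}^k\|$ is absorbed into $\eta^2$ and $\|\mathbf{d}^k\|^2$ contributions. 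This gives $\|\g^k_{\y}\|\le C_1(\eta^2+\|\mathbf{d}^k\|^2)$, so that, with the stated upper bound on $\eta$, $\eta^2$ accounts for a fixed fraction of $\varepsilon_g$. Similarly, the first condition on $\eta$ gives $\tfrac{3\eta}{2}\le$ a fixed fraction of $\sqrt{\varepsilon_H}$. It follows that $\y^k$ is $(\varepsilon_g,\varepsilon_H)$-approximate second-order stationary as soon as
\[
\|\mathbf{d}^k\|\le r:=c_0\min\{\sqrt{\varepsilon_g},\sqrt{\varepsilon_H}\},
\]
for a constant $c_0$ depending only on $M(\overline{\x})$ and $\overline{\sigma}$. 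The delicate step is tracking the constants so that the specific coefficients in the hypothesis on $\eta$ produce exactly this conclusion. I would fix $r$ first and then verify both inequalities, rather than optimize the constants.

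\emph{Counting.} Suppose $\|\mathbf{d}^k\|>r$ for all $k=\widehat K,\dots,\widehat K+N$. The line-search acceptance, combined with \eqref{ineq:x_sufficient_descent} in the chain $F(\x^{k+1})\le F(\y^k)\le F(\x^k)-\tfrac{c}{6}\|\mathbf{d}^k\|^3$, gives
\[
F(\x^{\widehat K})-\underline F\ \ge\ \tfrac{c}{6}\sum_k\|\mathbf{d}^k\|^3\ >\ \tfrac{c}{6}N r^3 .
\]
Hence $N\le 6(F(\x^{\widehat K})-\underline F)/(c\,r^3)=\mathcal O(\max\{\varepsilon_g^{-3/2},\varepsilon_H^{-3/2}\})$. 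Adding the $\widehat K$ preliminary iterations yields the claimed bound.

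The main obstacle is the first step: converting the additive $\eta$-errors (the $a=\eta/2$ term in \eqref{key2} and the $\eta$ regularization in \eqref{subd}) into the tolerance budget through the precise choice of $\eta$, while keeping $c_0$ independent of $\varepsilon$.
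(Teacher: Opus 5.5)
Your proposal is correct and follows essentially the paper's route: the same bounds $\|\mathbf{g}^k_{\mathbf{y}}\|\le \tfrac{3\eta}{2}\|\mathbf{d}^k\|+\tfrac{M(\overline{\x})+\overline{\sigma}}{2}\|\mathbf{d}^k\|^2$ and $H^k_{\mathbf{y}}\succcurlyeq -\bigl(\tfrac{3\eta}{2}+\tfrac{2M(\overline{\x})+\overline{\sigma}}{2}\|\mathbf{d}^k\|\bigr)I$ at $\mathbf{y}^k$, then counting via the cubic decrease $\tfrac{c}{6}\|\mathbf{d}^k\|^3$. The paper phrases the threshold step as a case split on whether $\|\mathbf{d}^k\|\le 3\eta/(M(\overline{\x})+\overline{\sigma})$, but it lands on exactly the radius $r=\min\{\sqrt{\varepsilon_g/(M(\overline{\x})+\overline{\sigma})},\,2\sqrt{\varepsilon_H}/(3M(\overline{\x})+2\overline{\sigma})\}$ that your direct verification produces; one caution is that an unweighted Young split $C_1(\eta^2+\|\mathbf{d}^k\|^2)$ fails when $M(\overline{\x})+\overline{\sigma}$ is large, so carry out the direct check you propose (with this $r$ the factor $M(\overline{\x})+\overline{\sigma}$ cancels in $\tfrac{3\eta}{2}r\le\tfrac{\varepsilon_g}{2}$), and note that it is the second, not the first, condition on $\eta$ that gives $\tfrac{3\eta}{2}\le\tfrac{M(\overline{\x})+\overline{\sigma}}{3M(\overline{\x})+2\overline{\sigma}}\sqrt{\varepsilon_H}$.
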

\begin{proof}
	We first show that if $\|\mathbf{d}^{k}\|\le \frac{3\eta}{M(\overline{\x})+\overline{\sigma}}$, then $\mathbf{y}^{k}$ is an $(\varepsilon_g, \varepsilon_H)$-approximate second-order stationary point. To this end, invoking \eqref{gsub}, we have
	\begin{equation}
		\begin{aligned}
			\| \mathbf{g}_{\mathbf{y}}^{k}\|
			&= \left\|\mathbf{g}_{\mathbf{y}}^{k}-\mathbf{g}^{k} -  H^{k}\mathbf{d}^{k} - \frac{\sigma_{k}}{2}\|\mathbf{d}^{k}\|\mathbf{d}^{k} - \eta \mathbf{d}^{k}\right\| \\
			&\le \left\|\mathbf{g}_{\mathbf{y}}^{k}-\mathbf{g}^{k} -  H^{k}\mathbf{d}^{k} \right\| + \frac{\sigma_{k}}{2}\|\mathbf{d}^{k}\|^2+ \eta \|\mathbf{d}^{k}\| \\
			&\overset{(a)}{\le} \frac{M(\overline{\x})+\overline{\sigma}}{2}\|\mathbf{d}^{k}\|^2+ \frac{3\eta}{2} \|\mathbf{d}^{k}\|,
			\label{ieq:gy_bound}
		\end{aligned}
	\end{equation}
	where (a) follows from \eqref{ineq:grad_var_bound} and \eqref{key2}.  
	Consequently,
	$
	\| \mathbf{g}_{\mathbf{y}}^{k}\| \le \frac{9\eta^2}{M(\overline{\x})+\overline{\sigma}} \le \varepsilon_g.
	$
	
	Similarly, combining \eqref{Hsub} with \eqref{key2}, we obtain
	\begin{equation}
		H_{\mathbf{y}}^{k} \succeq H^{k} - M(\overline{\x})\|\mathbf{d}^{k}\| I - \frac{\eta}{2}I
		\succeq -\frac{3\eta}{2} I -\frac{ 2M(\overline{\x})+\overline{\sigma}}{2}\|\mathbf{d}^{k}\|I.
		\label{ieq:Hy_bound}
	\end{equation}
	Hence,
	$
	H_{\mathbf{y}}^{k} \succeq - \frac{\eta(6\overline{\sigma}+9M(\overline{\x}))}{2\overline{\sigma}+2M(\overline{\x})}I 
	\succeq -\sqrt{\varepsilon_H}I.
	$
	Therefore, $\mathbf{y}^{k}$ is an $(\varepsilon_g, \varepsilon_H)$-approximate second-order stationary point.
	 Next, define the index $T$ as the first iteration (after performing exact cubic Newton steps, i.e., $\beta_k=1$ for $k\ge\widehat K$) such that an $(\varepsilon_g, \varepsilon_H)$-approximate second-order stationary point is reached:
	\begin{equation*}
		T:=\inf_{k\ge\widehat K}\left\{k: \|\mathbf{g}^{k}_{\mathbf{y}}\| \le \varepsilon_g~\text{and}~ \lambda_{\min}(H^k_{\mathbf{y}}) \ge -\sqrt{\varepsilon_H} \right\}.
	\end{equation*}
	
	\noindent Since $\eta < \sqrt{\varepsilon_H}$, it follows from Corollary \ref{thm:approximate_second_order_stationary} that $T<\infty$. 
	For any $k\in[\widehat K, T-1]$, we must have $\|\mathbf{d}^{k}\|> \frac{3\eta}{M(\overline{\x})+\overline{\sigma}}$, i.e.,
	$
	\eta < \frac{M(\overline{\x})+\overline{\sigma}}{3}\|\mathbf{d}^{k}\|.$
	Invoking \eqref{ieq:gy_bound} and \eqref{ieq:Hy_bound}, we obtain
	\begin{equation*}
		\begin{aligned}
			\| \mathbf{g}_{\mathbf{y}}^{k}\| < (M(\overline{\x})+\overline{\sigma})\|\mathbf{d}^{k}\|^2, \;
			H_{\mathbf{y}}^{k} \succeq -\left(\frac{3M(\overline{\x})}{2}+\overline{\sigma}\right)\|\mathbf{d}^{k}\| I.
		\end{aligned}
	\end{equation*}
From the second, we have that 
\begin{eqnarray*}\lambda_{\min}(H_{\mathbf{y}}^{k})\ge -\left(\frac{3M(\overline{\x})}{2}+\overline{\sigma}\right)\|\mathbf{d}^{k}\|.\end{eqnarray*}
Therefore, 
\begin{eqnarray*}\|\h d^k\|\ge \frac{2[-\lambda_{\min}(H_{\mathbf{y}}^{k})]_+}{3M(\overline{\x})+2\overline{\sigma}}.\end{eqnarray*}
	Combining the above inequalities yields
	\begin{eqnarray*}
			\|\mathbf{d}^{k}\| 
			> \max\left\{\sqrt{\frac{\| \mathbf{g}_{\mathbf{y}}^{k}\|}{M(\overline{\x})+\overline{\sigma}}},\,
			\frac{2[-\lambda_{\min}(H_{\mathbf{y}}^{k})]_+}{3M(\overline{\x})+2\overline{\sigma}}\right\} >\min\left\{\sqrt{\frac{\varepsilon_g}{M(\overline{\x})+\overline{\sigma}}},\,
			\frac{2\sqrt{\varepsilon_H}}{3M(\overline{\x})+2\overline{\sigma}}\right\},		
	\end{eqnarray*}
	where the last is due to either $\|\mathbf{g}^{k}_{\mathbf{y}}\| > \varepsilon_g$ or $\lambda_{\min}(H^k_{\mathbf{y}}) < -\sqrt{\varepsilon_H}$
	when $k\in[\widehat K, T-1]$.
	Combining with the sufficient descent property, we have for $k\in[\widehat K, T-1]$,
	\begin{equation*}
			\begin{aligned}
			F({\mathbf{x}}^{k+1})
			&\le F(\mathbf{y}^{k}) \le F(\mathbf{x}^{k})-\frac{\eta}{4}\big\|\mathbf{x}^{k}-\mathbf{y}^{k}\big\|^2-\frac{c}{6}\big\|\mathbf{x}^{k}-\mathbf{y}^{k}\big\|^3 \\
			&\le F(\mathbf{x}^{k})-\frac{c}{6}\|\mathbf{d}^{k}\|^3 \\
			&<F(\mathbf{x}^{k}) - \frac{c}{6}\min\left\{\sqrt{\frac{\varepsilon_g}{M(\overline{\x})+\overline{\sigma}}},\frac{2\sqrt{\varepsilon_H}}{3M(\overline{\x})+2\overline{\sigma}}\right\}^3 \\
			&\le F(\mathbf{x}^{k}) - \frac{c}{6}\min\left\{\sqrt{\frac{1}{M(\overline{\x})+\overline{\sigma}}},\frac{2}{3M(\overline{\x})+2\overline{\sigma}}\right\}^3 \min\{\varepsilon_g^{3/2},\varepsilon_H^{3/2}\},
		\end{aligned}
	\end{equation*}
	where the first inequality follows from \eqref{ieq:PG_descent}.
	Rearranging the above inequality yields
	\begin{equation*}
		T-\widehat K
		\le \left\lceil \frac{6(F(\x^{\widehat K})-\underline{F})}{c}
		\min\left\{\sqrt{\frac{1}{M(\overline{\x})+\overline{\sigma}}},\frac{2}{3M(\overline{\x})+2\overline{\sigma}}\right\}^{-3}
		\min\{\varepsilon_g^{-3/2},\varepsilon_H^{-3/2}\} \right\rceil.
	\end{equation*}
	Thus, the assertion follows. \qed
\end{proof}

\section{Numerical Results}\label{sec:numerical}
%We presents some numerical experiments to demonstrate the efficiency of our algorithm.

%\subsection{A Practical Version}
In this section, inspired by \citep{cartis2011adaptive}, we propose an adaptive variant of the Subspace Cubic Newton-Proximal Gradient method, referred to as ASCN-PG, to avoid the need for estimating the cubic regularization parameter $\sigma_k$. The resulting scheme is summarized in Algorithm~\ref{Ag2}.

\begin{algorithm}[!htpb]
	\small
	\caption{Adaptive Subspace Cubic Newton-Proximal Gradient (ASCN-PG)}	
	\begin{algorithmic}[1]
		\Require{ $\mathbf{y}_0, \ 0<\alpha < 1/L,\ c>0,\ \eta>0,\ 0 < q < 1, {\omega_k\geq \underline{\omega}>1,}\  \varepsilon_k >0,\ 0<\eta_1<\eta_2<1, \ 0<\gamma_1<1<\gamma_2,\ 0<\sigma_{\min}<\sigma_{\max},\ \sigma_1 > 0$, \tt MaxIt.}
		\For {$k = 1:{\texttt{MaxIt}}$}
		
		\State \textbf{(Proximal--gradient step)}
		$\mathbf{x}^{k}\in
		\prox_{\alpha g}
		\bigl(\mathbf{y}^{k-1}-\alpha\nabla h(\mathbf{y}^{k-1})\bigr).$
		
		\State \textbf{($\varepsilon$-Active subspace identification)}
		$
		\mathcal S^{k}:=\{\,i:\ |x_i^{k}|\ge \varepsilon_k\,\}, \; n_k:=\sharp(\mathcal S^{k}).
		$
		\If{$\mathcal{S}^k \neq\emptyset$}
		
		\State \textbf{(Restricted derivatives)} $
		\g^{k}=\nabla_{\mathcal S^{k}}F(\mathbf{x}^{k}),\;
		H^{k}=\nabla^{2}_{\mathcal S^{k}}F(\mathbf{x}^{k}).$
		\State \textbf{(Subspace cubic--regularized Newton step)}
		$$\mathbf{d}^{k}\in\mathop{\arg\min}_{\mathbf{d}\in\R^{n_k}}
		\left\{
		\langle \g^{k},\mathbf{d}\rangle
		+\frac12\langle (H^{k}+\eta I)\mathbf{d},\mathbf{d}\rangle
		+\frac{\sigma_k}{6}\|\mathbf{d}\|^{3}
		\right\}.
		$$
		
		\State \textbf{(Backtracking line search)}
		Set $\beta_{k,0}=1$.
		\For{$j_k = 0,1,...$}
		\State Define $
		\mathbf{y}^{k,j_k}_{\mathcal S^{k}}=\mathbf{x}^{k}_{\mathcal S^{k}}+\beta_{k,j_k}\mathbf{d}^{k},
		\;
		\mathbf{y}^{k,j_k}_{(\mathcal S^{k})^{c}}=\mathbf{x}^{k}_{(\mathcal S^{k})^{c}}.$
		\If{$F(\mathbf{y}^{k,j_k})\le F(\mathbf{x}^{k})-\frac{\eta}{4}||\mathbf{x}^{k}-\mathbf{y}^{k,j_k}||^2-\frac{c}{6}||\mathbf{x}^{k}-\mathbf{y}^{k,j_k}||^3$}
		\State{$\mathbf{y}^{k} = \mathbf{y}^{k,j_k},\beta_k = \beta_{k,j_k}$}
		\State{break}
		\Else
		\State{$\beta_{k,j_k+1} = q*\beta_{k,j_k}$}
		\EndIf
		\EndFor
		
		\State{\textbf{(Adjust the regularized parameter)}}
		\State{Define $m_{k}(\mathbf{d}) = F(\x^k) +  \langle \g^{k},\ \mathbf{d}\rangle +\frac{1}{2}\langle (H^{k}+\eta I)\mathbf{d},\ \mathbf{d} \rangle+ \frac{\sigma_{k}}{6}||\mathbf{d}||^3$}
		\State{Let {$\rho_k = {(F(\x^k) - F(\y^k))}/(F(\x^k) - m_k(\beta_k\mathbf{d}^k))$}}
		\If{$\rho_k \ge \eta_2$}
		%\Comment{very successful}
		\State $\sigma_{k+1} = \max\{\sigma_{\min},\gamma_1\sigma_k\}$
		\ElsIf{$\rho_k \le \eta_1$}
		%\Comment{unsuccessful}
		\State $\sigma_{k+1} = \min\{\sigma_{\max}, \gamma_2\sigma_k\}$
		\Else
		%\Comment{successful}
		\State $\sigma_{k+1} = \sigma_k$
		\EndIf
		\Else
		\State{Set $\mathbf{y}^k = \mathbf{x}^k$.}
		\EndIf
		\If{Termination Criterion is True}
		\State{break}
		\EndIf
		\EndFor
		\State{{\bf Output} ${\mathbf{y}_{k}}$.}
	\end{algorithmic}\label{Ag2}

\end{algorithm}
To illustrate the efficiency of the newly-developed framework, we  compare SCN-PG and ASCN-PG with two other algorithms: Proximal Gradient method (PG) \citep{beck2017first} given by:
$$
\x^k\in\underset{\mathbf{x}}{\arg\min}\left\{g(\mathbf{x}) + \frac{1}{2\alpha}||\mathbf{x} - (\x^{k-1}- \alpha\nabla h(\x^{k-1}))||^2\right\},
$$
and the Manifold Acceleration truncated Newton method (ManAcc) \citep{bareilles2023newton}.
%To measure the performance of tested algorithms, we record the following measurements: objective function value (Obj), number of nonzero element (NNZ), the  dimension of subspace for Newton Cubic update ($n_k$), the CPU time (CPU), the Iteration numbers (Iter), the norm of restricted gradient to the current support set ($\|\g^k\|$), the minimal eigenvalue of the restricted Hessian to the current support set ($\lambda_{\min}(H^k)$)	and the
%Additionally, we also compute the successful ratio (SR) (See section \ref{sec:recovery}) and the classification accuracy (Acc) (See section \ref{sec:logistic})
%All these tested algorithms, we use the uniform  termination criterion:
For all tested algorithms, we use the following uniform termination criterion
\begin{equation}\label{stoppping}
	\frac{||\x^{k+1}-\x^k||}{\max\{||\x^{k}||,{\text{eps}}\}}<\texttt{RTol},~\text{or}~ k> \texttt{Maxit},~\text{or}~\text{CPUtime}> \texttt{CTol}~\text{seconds}.
\end{equation}
Define the KKT residual of (\ref{Problem}) by:
$
	\mathrm{KKT}(\x) = \inf_{\xi\in \partial g(\x)}\|\nabla h(\x) + \xi\|.
$
All reported results  are obtained by averaging over 30 independent trials.

\subsection{Sparse Signal Recovery}\label{sec:recovery}
In this section, we consider the sparse recovery model proposed in \citep{Tao20}:
\begin{equation}
	\min_{\x\in\mathbb{R}^n} 
	\lambda\frac{\|\x\|_1}{\|\x\|_2} 
	+ \frac{1}{2}\|A\x-\mathbf{b}\|_{2}^{2},
	\label{prob:L1oL2_recovery}
\end{equation}
where $A\in\mathbb{R}^{m\times n}$ with $m\ll n$, $\mathbf{b}\in\mathbb{R}^{m}$, and $\lambda>0$.
We consider two types of sensing matrices.
(i) Gaussian matrix (Gauss).
	The matrix $A$ is drawn from $\mathcal{N}(0,\Sigma)$ with covariance
	$
	\Sigma = (1-r)I_n + r\mathbf{1}\mathbf{1}^\top,
	\quad r\in(0,1).$
	(ii) Oversampled DCT matrix (ODCT).
	Let $A=[\mathbf{a}_1,\ldots,\mathbf{a}_n]$, where
	$
	\mathbf{a}_j = \frac{1}{\sqrt{m}} 
	\cos\!\left(2\pi \mathbf{w}_j F \right),
	\quad j=1,\ldots,n,$
	where $\mathbf{w}\in[0,1]^m$ is sampled uniformly and $F\in\mathbb{R}_+$ controls the coherence.
We set $m=64$, $n=1024$. The ground-truth signal $\x^*$ is $K$-sparse with
$K \in \{2,4,6,8,10\},$
and its nonzero entries are generated by
$\texttt{x(supp) = sign(randn(K,1)).*1e2.*randn(K,1)}.
$
The observation vector is given by $\mathbf{b}=A\x^*$.
We consider the following parameter configurations:
ODCT: $F \in \{10,15,20\}$,
Gaussian: $r \in \{0.1,0.2,0.3\}$.
We denote each instance by $(\mathrm{ODCT},K,F)$ and $(\mathrm{Gauss},K,r)$, respectively.

In \eqref{prob:L1oL2_recovery}, we set $\lambda=10^{-2}$ and let
$h(\x)=\frac{1}{2}\|A\x-\mathbf{b}\|_2^2,$
$g(\x)=\lambda\frac{\|\x\|_1}{\|\x\|_2}
$ for all these tested algorithms.
For PG, we set $\alpha=2$. For ManAcc,
we set $m_1=10^{-4}$, $\theta=0.5$.
To enhance stability, we use the modified Hessian $H^k+\eta I$ with $\eta=10^{-4}$ and $\alpha=2$.
For SCN-PG,
we set $\alpha=2$, $c=10^{-3}$, $\eta=10^{-10}$, $q=0.95$, and
$
M(\x^k)=\frac{256\sqrt{\|\x^k\|_0}}{\|\x^k\|^3}, 
\;
\sigma_k=M(\x^k)+c.$ threshold is chosen as
$
\varepsilon_k = 
\left(
\tau_1\|(\x^{k}-\y^{k-1})/\alpha\|
+\tau_2\|(\x^{k-1}-\y^{k-1})/\alpha\|
\right)^{\tau_3},$
with $\tau_1=0.99$, $\tau_2=0.01$, $\tau_3=0.85$.
For ASCN-PG,
we set $\alpha=2$, $c=10^{-3}$, $\eta=10^{-10}$, $q=0.95$, and
$\eta_1=0.1,\ \eta_2=0.9,\ 
\gamma_1=0.5,\ \gamma_2=2,
\sigma_{\min}=2c,\
\sigma_{\max}=100\sigma_{\min},\
\sigma_1=25\sigma_{\min}.
$
The threshold $\varepsilon_k$ is chosen as in SCN-PG.
We initialize all methods using the solution of the $L_1$-regularized problem
$\min_{\x\in\mathbb{R}^n} 
\lambda_0\|\x\|_1 
+ \frac{1}{2}\|A\x-\mathbf{b}\|_2^2,$
with $\lambda_0=10^{-3}$.
All these tested algorithms are terminate by (\ref{stoppping})  with $
\texttt{RTol}=10^{-8},\
\texttt{CTol}=50,$
and $\texttt{Maxit}=200000$.

\begin{figure}[t]
	\centering
	\begin{tabular}{cc}
		%\toprule
		%$(\text{ODCT},10,20)$ & $(\text{Gauss},10,0.3)$ \\
		%\midrule
		\includegraphics[scale=.4]{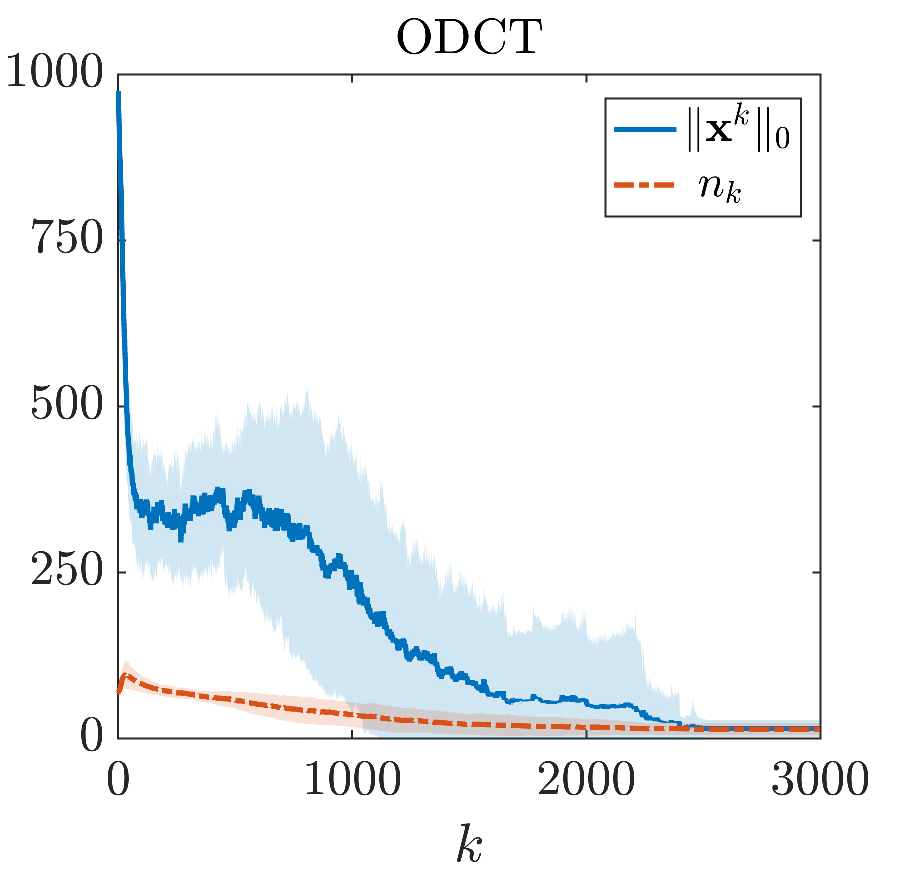} &
		\includegraphics[scale=.4]{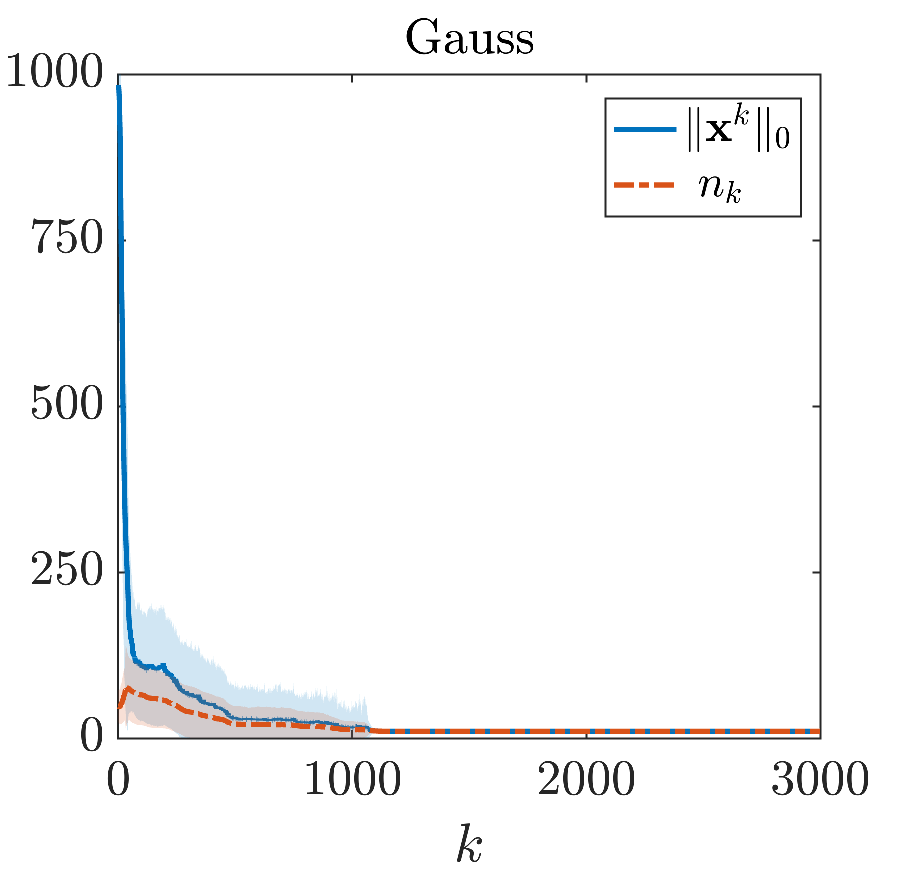} \\
		%\bottomrule
	\end{tabular}
	\caption{Histories of $\|\mathbf{x}^k\|_0$ and $n_k$ for SCN-PG under two scenarios:
		$(\mathrm{ODCT},10,20)$ (left) and $(\mathrm{Gauss},10,0.3)$ (right).}
	\label{fig:nnz_vs_nk}
\end{figure}
%Next, we study the relationship between $\|\x^k\|_0$ and $n_k$ for ASCN-PG.

Figure~\ref{fig:nnz_vs_nk} depicts the histories of $\|\x^k\|_0$ and $n_k$ with respect to the iteration number when applying SCN-PG to two scenarios: {$(\mathrm{ODCT},10,20)$ and $(\mathrm{Gauss},10,0.3)$. 
The curves correspond to averaged values over multiple runs, while the shaded regions indicate one standard deviation above and below the mean.
In the early stage, $\|\x^k\|_0$ is significantly larger than $n_k$, indicating that although the iterates have relatively large supports, the cubic Newton step is performed within a much lower-dimension subspace. 
After a finite number of iterations, the two quantities coincide, which is consistent with the finite identification established in our paper.
Moreover, the trajectory of $\|\x^k\|_0$ exhibits noticeable oscillations, whereas $n_k$ remains comparatively stable. This contrast highlights the robustness and effectiveness of the proposed subspace selection strategy.
The behavior of ASCN-PG is qualitatively similar and is therefore omitted for brevity.

\begin{figure}[t]
	\centering
	\begin{tabular}{cc}
		%\toprule
		%ODCT & Gauss \\
		%\midrule
		%\includegraphics[scale=.37]{APG_CN_figure/emperical_CDF_kkt_ODCT.eps} &
		%\includegraphics[scale=.37]{APG_CN_figure/emperical_CDF_kkt_Gauss.eps} \\
		\includegraphics[scale=.4]{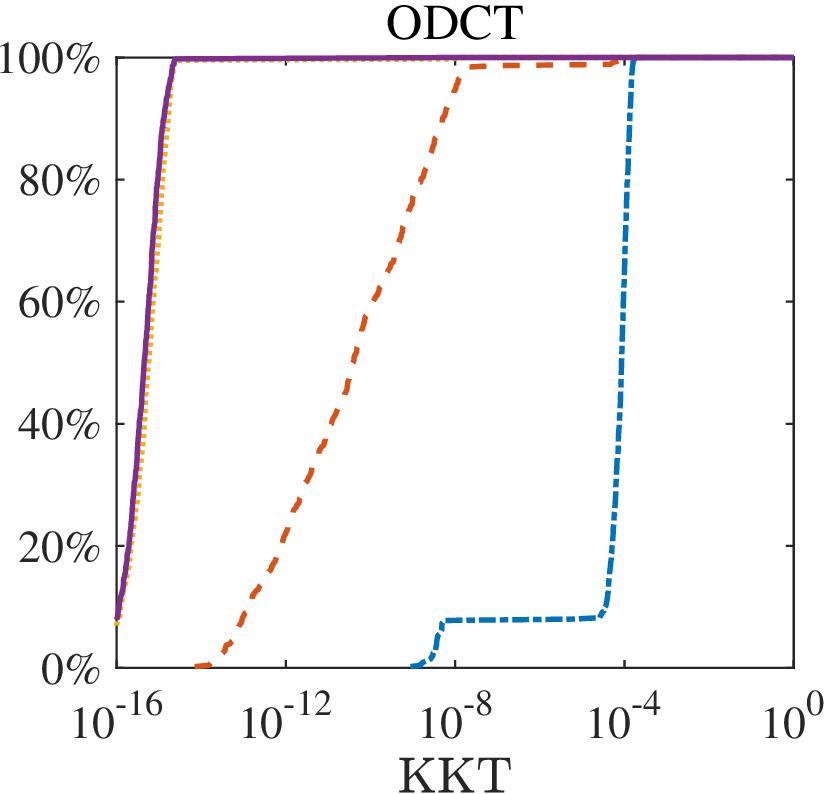} &
		\includegraphics[scale=.4]{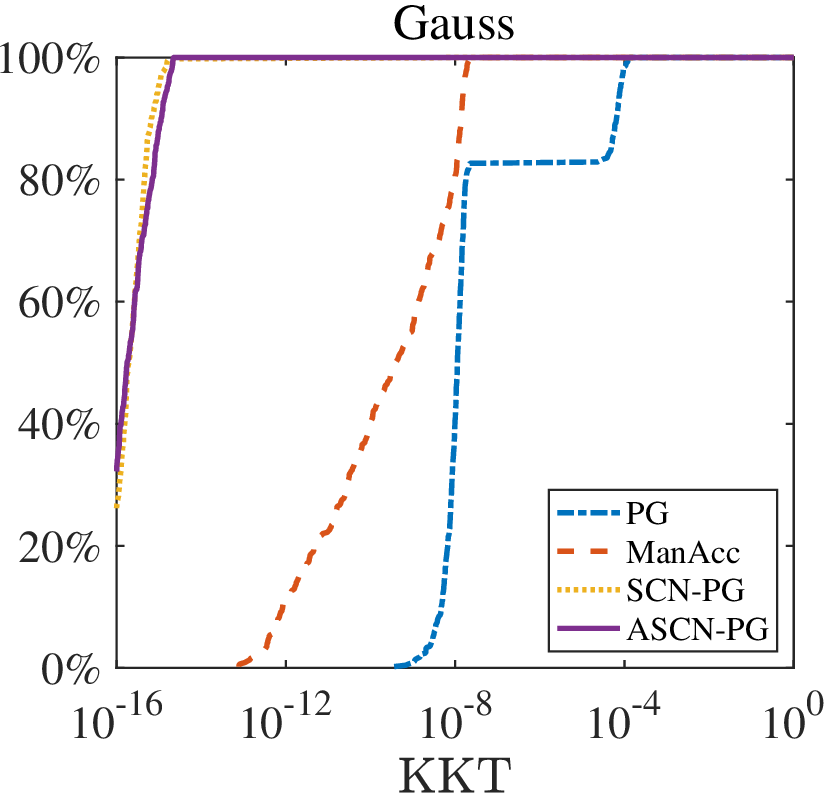} \\
		%\bottomrule
	\end{tabular}
	\caption{The empirical cumulative distribution of the KKT.}
	\label{fig:emperical_CDF_kkt}
\end{figure}

%	\begin{figure}[t]
	%		\centering
	%		\subfigure{%
		%			\includegraphics[scale = .4]{APG_CN_figure/perf_profile_cup_ODCT.eps}}%
	%		\hspace{0.1mm}%
	%		\subfigure{%
		%			\includegraphics[scale = .4]{APG_CN_figure/perf_profile_cup_Gauss.eps}}%
	%		\caption{The performance profile.}
	%		\label{fig:performance_profile}
	%	\end{figure}
\begin{figure}[t]
	\centering
	\begin{tabular}{cc}
		%\toprule
		%ODCT & Gauss \\
		%\midrule
		%\includegraphics[scale=.32]{APG_CN_figure/perf_profile_cup_ODCT.eps} &
		%\includegraphics[scale=.32]{APG_CN_figure/perf_profile_cup_Gauss.eps} \\
		\includegraphics[scale=.4]{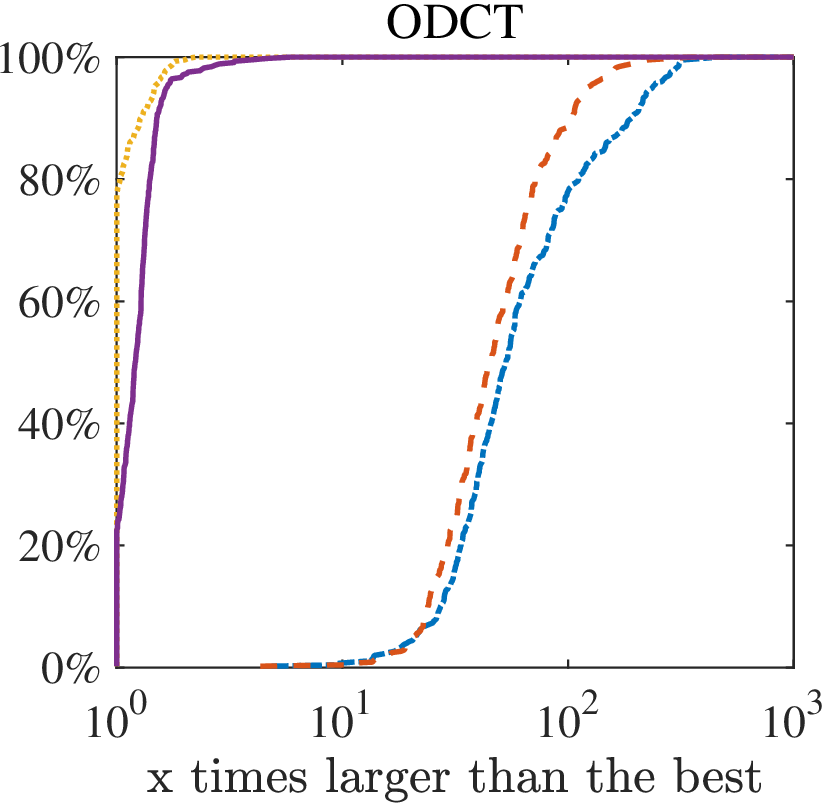} &
		\includegraphics[scale=.4]{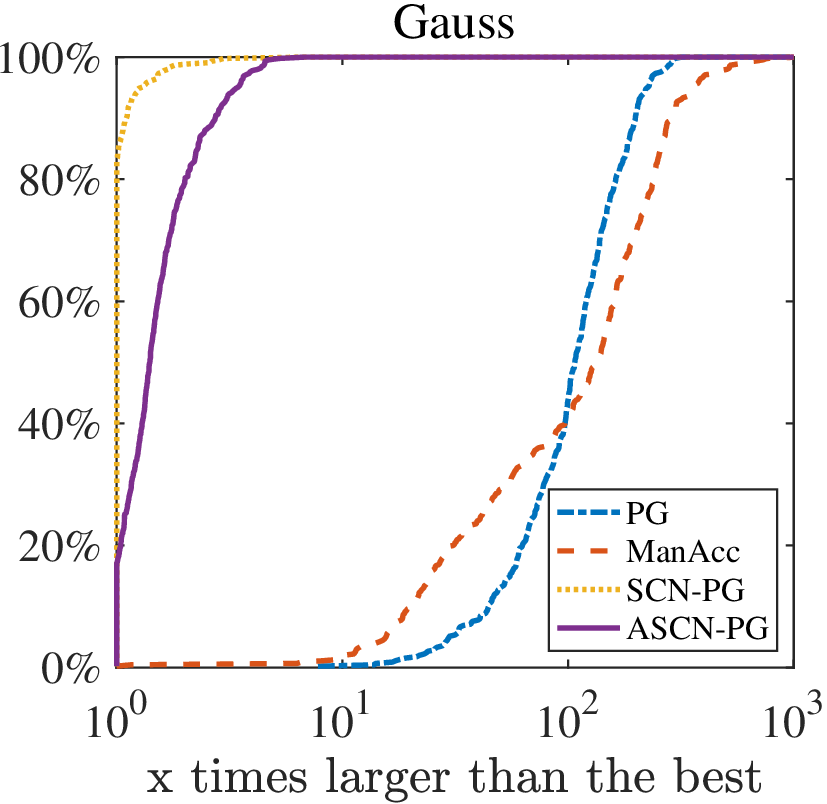} \\
		%\bottomrule
	\end{tabular}
	\caption{The performance profile of CPU.}
	\label{fig:performance_profile}
\end{figure}

%	\begin{figure}[t]
	%		\centering
	%		\caption*{\textbf{Comparison of Objective, Error and KKT versus Iteration ($\times 10^4$)}}%
	%		\vspace{1mm}
	%		\begin{subfigure}{1}
		%			\includegraphics[width=0.1\linewidth]{APG_CN_figure/ODCT_Iteration_obj.eps}
		%			\caption{Obj vs Iter $(\times 10^4)$}
		%		\end{subfigure}
	%		\begin{subfigure}{2}
		%			\includegraphics[width=0.1\linewidth]{APG_CN_figure/ODCT_Iteration_obj.eps}
		%			\caption{Obj vs Iter $(\times 10^4)$}
		%		\end{subfigure}
	%	\begin{subfigure}{3}
		%		\includegraphics[width=0.1\linewidth]{APG_CN_figure/ODCT_Iteration_obj.eps}
		%		\caption{Obj vs Iter $(\times 10^4)$}
		%	\end{subfigure}
	%		
	%		\vspace{-1mm}
	%		\caption{$r_k$ and $\text{rank}(X_k)$ for RA-PFBS-L$_1$/L$_2$, RA-PFBS-MCP, RA-PFBS-SCAD and RA-PFBS-Lp.}
	%		\label{fig:odct_iterations}
	%	\end{figure}
Next, we compare the numerical performance of PG, ManAcc, SCN-PG and ASCN-PG.
Figure \ref{fig:emperical_CDF_kkt} displays the empirical cumulative distribution of the KKT residuals obtained by PG, ManAcc, SCN-PG and ASCN-PG
% when test 900 instances with solving two types of matrix: 
 when solving 900 test instances generated from the two types of sensing matrices:
 ODCT and Gauss, $F=10,15,20$ and $K=2,4,6,8,10$ and randomly generated 30
instances with different initial points and randomly generated two types matrices.
Specifically, a point $(x,y)$ on the curve indicates that the proportion of 900 test instances for which the KKT residual of the solution obtained by the algorithm is smaller than $x$ equals $y$. As shown in the figure,  in approximately 90\% of the cases, the KKT residuals produced by SCN-PG andASCN-PG are below $10^{-15}$, whereas those of ManAcc and PG are around $10^{-9}$ and $10^{-4}$, respectively. This demonstrates the significantly higher solution accuracy achieved by SCN-PG and ASCN-PG.

Figure \ref{fig:performance_profile} presents the performance profile for all tested algorithms for the same
450 test instances for ODCT (left) and Gauss (right), respectively. In each plot, the point where the `percentage' axis intersects with the curve indicates the performance ratio of the current solver compared to the best solver. From the figures, we observe that in approximately $80\%$ of the cases, {SCN-PG} requires the least CPU time, whereas in about $20\%$ of the cases, {ASCN-PG} achieves the smallest CPU time. Moreover, ManAcc and PG are approximately $50$ times slower than SCN-PG in nearly $50\%$ of the test instance.
%		Regarding the number of iterations, ManAcc and PG require roughly $10$ times and $1000$ times more iterations than ASCN-PG in nearly half of the experiments.
These observations clearly demonstrate the superior efficiency of SCN-PG and ASCN-PG.

\subsection{Logistic Regression}\label{sec:logistic}

We solve the following $L_1 - \mu L_2$-regularized logistic regression problem \citep{qin20191}:
\begin{equation}
	\min_{(\z,u)\in\mathbb{R}^{n}\times\mathbb{R}} \frac{1}{m}\sum_{i = 1}^m\log(1 + e^{-b_i(\mathbf{a}^{\top}_i\z+u)}) + \lambda(\|\z\|_1 - \mu \|\z\|),\label{prob:L1mL2_logistic}
\end{equation}
where $\lambda > 0$, and feature vectors $\mathbf{a}_i\in\mathbb{R}^n,i\in[m]$,   and the labels $b_i\in\{-1,+1\},i\in[m]$. Let $A = [\mathbf{a}_1,\mathbf{a}_2,...,\mathbf{a}_m]$, $\mathbf{b} = [b_1,b_2,...,b_m]^{\top}, \x = (\z,u)$ and define $\phi(\x;A,\mathbf{b}) = \sum_{i = 1}^m\log(1 + e^{-b_i(\mathbf{a}^{\top}_i\z+u)})/m$. We will compare SCN-PG, ASCN-PG with PG and ManAcc on both synthetic datasets and real-world datasets.
In model (\ref{prob:L1mL2_logistic}), we set $\mu=0.5, 1$ and $\lambda=5\times 10^{-2}$ for {\it Synthetic} data.
For all these tested algorithms, we set $h(\x) = \phi(\x;A,\mathbf{b}),\ g(\x)=\lambda(\|\z\|_1 - \mu \|\z\|)$.
To apply PG, we take $\alpha=0.1$. 
To apply ManAcc, we take  $\alpha=0.1$ and $\gamma = 0.1, m_1 = 10^{-4}, \theta = 0.5, \mathcal{M}_k = \{\mathbf{w}: \supp(\mathbf{w}) = \supp(\x^k)\}$. Moreover, to enhance the convergence, we also use $H^k + \eta I$ as the modified Hessian matrix at each iteration and we set $\eta = 10^{-3}$.
To apply SCN-PG, we take  $\alpha=0.1$ and $\alpha = 0.1, c= 10^{-3},\eta = 10^{-10}, q= 0.95, M(\x^k) = 1.1(M^k_{\mathrm{log}} + {6\lambda\mu}/{\|\x^k\|^{2}}),\ \sigma_k = 0.67(M(\x^k) + c)$ and $\varepsilon_k = (\tau_1\|(\x^{k} - \y^{k-1})/\alpha\|+ \tau_2\|(\x^{k-1} - \y^{k-1})/\alpha\|)^{\tau_3}$ with $\tau_1 = 0.99,\ \tau_2 = 0.01,\ \tau_3 = 0.9$, where $M^k_{\mathrm{log}} :=\sum_{i=1}^m (\|\mathbf{a}_{i,{\cal S}^k}\|_2^2+1)^{3/2}/(6\sqrt{3}m).$ 
To apply ASCN-PG, we take  $\alpha = 0.1,\ c= 10^{-3},\ \eta = 10^{-10},\  q= 0.95,\ \eta_1 = 0.1,\ \eta_2 = 0.9,\ \gamma_1 = 0.5,\ \gamma_2 = 2,\ \sigma_{\min} = 2c,\ \sigma_{\max} = 100\sigma_{\min}$ and $\varepsilon_k = (\tau_1\|(\x^{k} - \y^{k-1})/\alpha\|+ \tau_2\|(\x^{k-1} - \y^{k-1})/\alpha\|)^{\tau_3}$ with $\tau_1 = 0.99,\ \tau_2 = 0.01,\ \tau_3 = 0.9$.

\subsubsection{Synthetic Data}
We generate the data using the method described in \citep{koh2007interior}. Specifically, we consider problem sizes $(m,n) \in \{(10i,100i): i = 1,2,\ldots,20\}$. For each pair $(m,n)$, we generate $m/2$ positive examples and $m/2$ negative examples.
The features of the positive (negative) examples are independent and identically distributed, drawn from a normal distribution $\mathcal{N}(v,1)$, where $v$ is  drawn from a uniform distribution on $[0,1]$ (respectively, $[-1,0]$).
The initial point is randomly generated from standard Gauss distribution.
For termination criterion (\ref{stoppping}), we set $\texttt{RTol} = 10^{-8},\texttt{CTol} = 50$ and $\texttt{Maxit} = 200000$. To measure the classification accuracy, we record Acc defined by:
$
\text{Acc} = {\sharp\{i : \textrm{sign}(\mathbf{a}_i^{\top}\z + u) = b_i\}\rvert}/{m}.
$

\begin{figure}[t]
	\centering
	\begin{tabular}{cc}
		%\toprule
		%ODCT & Gauss \\
		%\midrule
		%\includegraphics[scale=.32]{APG_CN_figure/CPU_logistic_mu05.eps} &
		%\includegraphics[scale=.32]{APG_CN_figure/CPU_logistic_mu1.eps} \\
		\includegraphics[scale=.4]{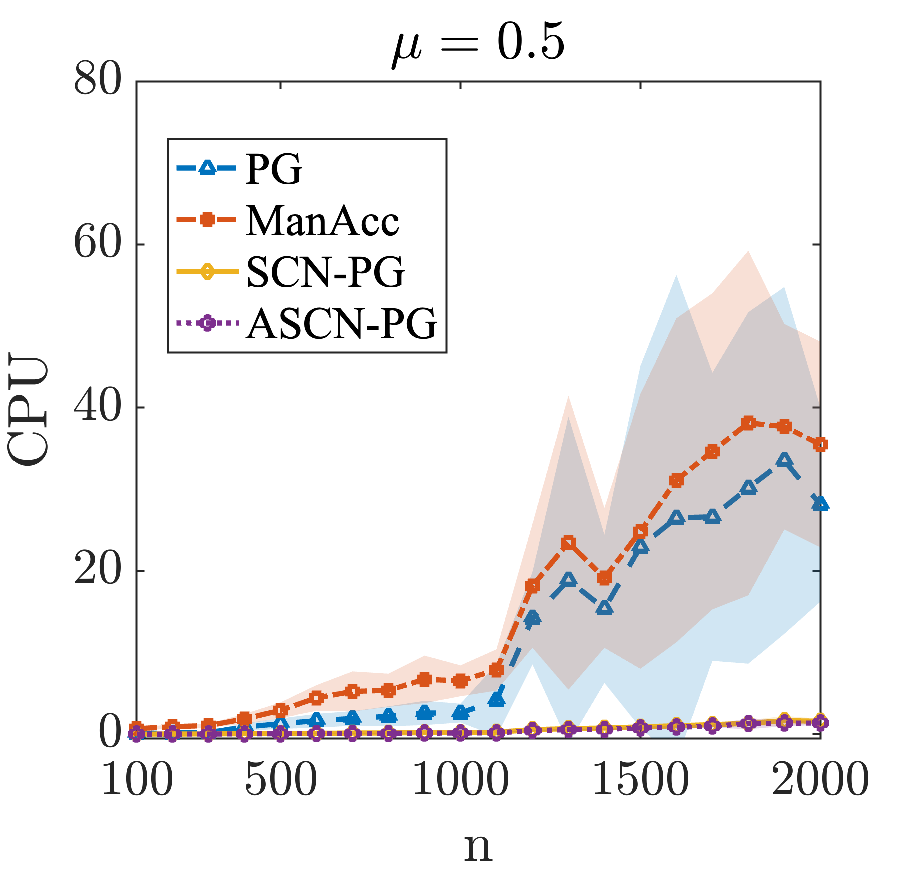} &
		\includegraphics[scale=.4]{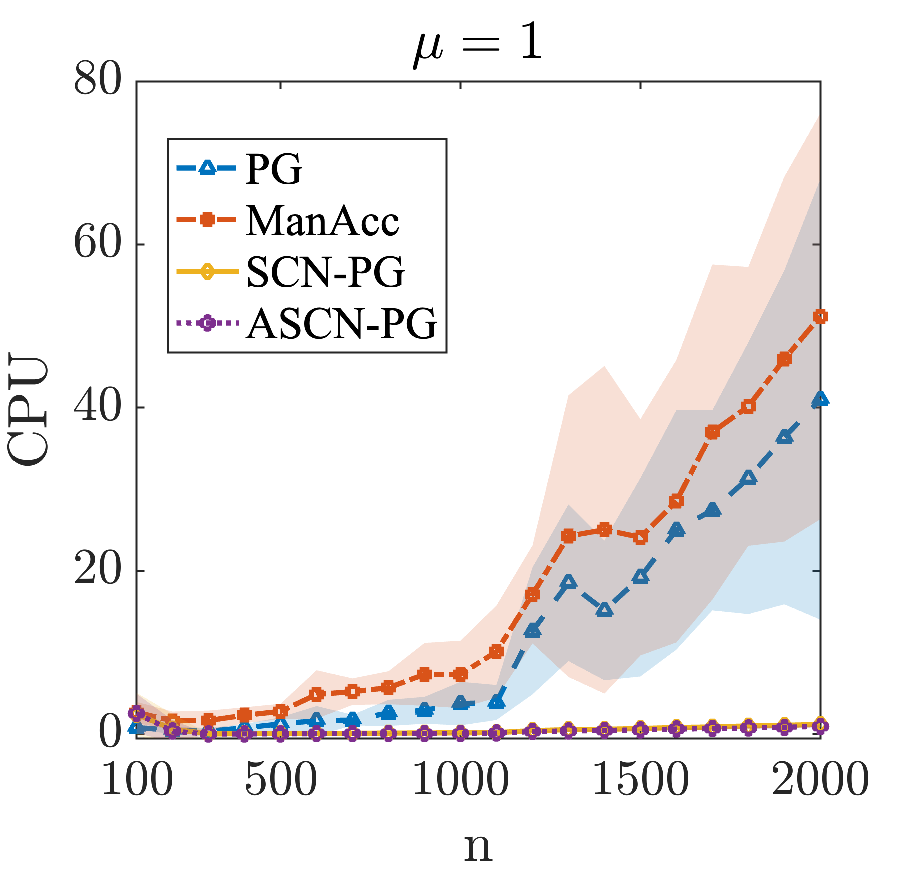} \\
		%\bottomrule
	\end{tabular}
	\caption{CPU time of PG, ManAcc, and ASCN-PG w.r.t.  $n$ for $\mu=0.5$ and $\mu=1$.}
	\label{fig:CPU_logistic}
\end{figure}
Figure \ref{fig:CPU_logistic} shows the evolution of the CPU time consumed by PG, ManAcc, {SCN-PG} and ASCN-PG as $n$ 
from $100$ to $2000$ for $\mu = 0.5$ and $\mu = 1$. The curves represent the averaged CPU time, while the shaded regions indicate the intervals corresponding to one standard deviation above and below the mean value. As observed, when $n$ is small, the CPU times of the three methods are comparable. As $n$ increases,  the CPU times of PG and ManAcc grow significantly than that of {SCN-PG} and ASCN-PG. In addition, the variance associated with {SCN-PG} and ASCN-PG is considerably smaller than that of PG and ManAcc, which further demonstrates the stability of them.

\begin{table}[htpb]
	\centering
	\small
	\setlength{\tabcolsep}{6pt}
	\renewcommand{\arraystretch}{1.15}
	\caption{Averaged results from the PG, ManAcc, SCN-PG and ASCN-PG for different  $\mu$ and $n$.}
	\begin{tabular}{cccccccc}
		\toprule
		$\mu = 0.5$ & Algorithm & Obj & NNZ & Acc & KKT & CPU & Iter \\
		\midrule
		\multirow{4}{*}{$n=400$} & PG & 2.09e-01 & 9.63 & 1.00 & 1.79e-07 & 8.72e-01 & 21584.30 \\
		& ManAcc & 2.10e-01 & 9.77 & 1.00 & 1.21e-07 & 1.87e+00 & 13903.67 \\
		& SCN-PG & 2.09e-01 & 9.73 & 1.00 & 1.10e-16 & 6.13e-02 & 462.93 \\
		& ASCN-PG & 2.10e-01 & 9.80 & 1.00 & 1.67e-16 & 3.93e-02 & 380.77 \\
		\midrule
		\multirow{4}{*}{$n=800$} & PG & 2.43e-01 & 16.03 & 1.00 & 1.36e-07 & 2.19e+00 & 24576.57 \\
		& ManAcc & 2.43e-01 & 16.13 & 1.00 & 1.07e-07 & 5.38e+00 & 20578.07 \\
		& SCN-PG & 2.43e-01 & 16.10 & 1.00 & 2.49e-16 & 1.53e-01 & 670.97 \\
		& ASCN-PG & 2.43e-01 & 16.23 & 1.00 & 3.71e-16 & 1.05e-01 & 539.83 \\
		\midrule
		\multirow{4}{*}{$n=1200$} & PG & 2.61e-01 & 21.97 & 1.00 & 1.19e-07 & 1.42e+01 & 30739.17 \\
		& ManAcc & 2.61e-01 & 21.97 & 1.00 & 1.01e-07 & 1.82e+01 & 25740.53 \\
		& SCN-PG & 2.61e-01 & 21.97 & 1.00 & 2.60e-16 & 5.53e-01 & 863.07 \\
		& ASCN-PG & 2.61e-01 & 22.03 & 1.00 & 2.03e-16 & 4.37e-01 & 733.03 \\
		\midrule
		\multirow{4}{*}{$n=1600$} & PG & 2.71e-01 & 26.03 & 1.00 & 1.08e-07 & 2.65e+01 & 30556.23 \\
		& ManAcc & 2.71e-01 & 26.07 & 1.00 & 1.47e-06 & 3.11e+01 & 26610.20 \\
		& SCN-PG & 2.71e-01 & 26.10 & 1.00 & 1.74e-16 & 1.03e+00 & 1003.73 \\
		& ASCN-PG & 2.71e-01 & 25.97 & 1.00 & 3.00e-16 & 8.55e-01 & 869.73 \\
		\midrule
		\multirow{4}{*}{$n=2000$} & PG & 2.78e-01 & 29.83 & 1.00 & 9.94e-08 & 2.81e+01 & 23053.03 \\
		& ManAcc & 2.78e-01 & 29.90 & 1.00 & 9.02e-08 & 3.55e+01 & 20287.17 \\
		& SCN-PG & 2.78e-01 & 29.93 & 1.00 & 1.95e-16 & 1.59e+00 & 1128.07 \\
		& ASCN-PG & 2.78e-01 & 29.83 & 1.00 & 1.92e-16 & 1.37e+00 & 1006.67 \\
		\midrule
		$\mu = 1$ & Algorithm & Obj & NNZ & Acc & KKT & CPU & Iter \\
		\midrule
		\multirow{4}{*}{$n=400$} & PG & 1.54e-01 & 5.70 & 1.00 & 3.18e-07 & 7.76e-01 & 21072.23 \\
		& ManAcc & 1.52e-01 & 5.77 & 1.00 & 2.63e-07 & 2.32e+00 & 23687.30 \\
		& SCN-PG & 1.60e-01 & 6.17 & 1.00 & 2.61e-16 & 5.24e-02 & 413.47 \\
		& ASCN-PG & 1.59e-01 & 6.03 & 1.00 & 1.31e-16 & 2.71e-02 & 322.30 \\
		\midrule
		\multirow{4}{*}{$n=800$} & PG & 2.02e-01 & 9.87 & 1.00 & 2.23e-07 & 2.61e+00 & 31252.03 \\
		& ManAcc & 2.02e-01 & 10.03 & 1.00 & 1.93e-07 & 5.69e+00 & 24445.63 \\
		& SCN-PG & 2.02e-01 & 9.77 & 1.00 & 1.54e-16 & 1.28e-01 & 605.60 \\
		& ASCN-PG & 2.01e-01 & 9.97 & 1.00 & 7.73e-17 & 7.62e-02 & 469.20 \\
		\midrule
		\multirow{4}{*}{$n=1200$} & PG & 2.24e-01 & 13.33 & 1.00 & 1.93e-07 & 1.26e+01 & 28834.63 \\
		& ManAcc & 2.24e-01 & 13.33 & 1.00 & 1.74e-07 & 1.71e+01 & 26889.73 \\
		& SCN-PG & 2.25e-01 & 13.33 & 1.00 & 2.24e-16 & 4.35e-01 & 742.80 \\
		& ASCN-PG & 2.25e-01 & 13.30 & 1.00 & 1.41e-16 & 3.16e-01 & 596.73 \\
		\midrule
		\multirow{4}{*}{$n=1600$} & PG & 2.38e-01 & 15.03 & 1.00 & 1.67e-07 & 2.50e+01 & 31944.07 \\
		& ManAcc & 2.38e-01 & 15.30 & 1.00 & 2.46e-07 & 2.86e+01 & 26492.20 \\
		& SCN-PG & 2.38e-01 & 15.40 & 1.00 & 9.43e-17 & 8.19e-01 & 876.60 \\
		& ASCN-PG & 2.38e-01 & 14.93 & 1.00 & 1.70e-16 & 6.24e-01 & 702.43 \\
		\midrule
		\multirow{4}{*}{$n=2000$} & PG & 2.47e-01 & 17.47 & 1.00 & 1.54e-07 & 4.10e+01 & 35403.33 \\
		& ManAcc & 2.48e-01 & 17.73 & 1.00 & 1.44e-07 & 5.12e+01 & 35251.10 \\
		& SCN-PG & 2.48e-01 & 17.50 & 1.00 & 1.52e-16 & 1.19e+00 & 930.53 \\
		& ASCN-PG & 2.48e-01 & 17.77 & 1.00 & 2.34e-16 & 9.57e-01 & 771.50 \\
		\bottomrule
	\end{tabular}
	
	\label{tab:mean_logistic}
\end{table}

Table \ref{tab:mean_logistic} reports  objective function value (Obj), number of nonzero element (NNZ), the CPU time (CPU), the Iteration numbers (Iter), 
and the KKT residual (KKT), the classification accuracy (Acc)
obtained by PG, ManAcc, {SCN-PG} and ASCN-PG for $\mu \in \{0.5,1\}$ and $n \in \{400,800,1200,1600,2000\}$.
From the table, we observe that the solutions produced by the three methods have very similar values of {Obj}, {NNZ}, and {Acc}. However, {SCN-PG} and ASCN-PG achieves significantly smaller {KKT} residuals and requires substantially less CPU time and fewer iterations than PG and ManAcc. In particular, the {KKT} residuals of PG and ManAcc are on the order of $10^{-8}$, whereas {SCN-PG} and ASCN-PG attains residuals on the order of $10^{-16}$. Moreover, the CPU time consumed by SCN-PG and ASCN-PG are only about $5\%$ and $2\%$ of that required by the other methods, respectively. These results clearly demonstrate the high computational efficiency of {SCN-PG} and ASCN-PG.

\subsubsection{Real-World Data}
We tested on a real-world dataset.  Consider two datasets, each of them consists a training set $(A,\mathbf{b})\in\mathbb{R}^{m\times n}\times\mathbb{R}^{m}$ and a testing set $(A_{\textrm{test}},\mathbf{b}_{\textrm{test}})\in\mathbb{R}^{m_{\textrm{te}}\times n}\times\mathbb{R}^{m_{\textrm{te}}}$.
\begin{itemize}
	\item \texttt{a9a}\footnote{\url{https://www.csie.ntu.edu.tw/~cjlin/libsvmtools/datasets/binary.html}}:
	Binary classification dataset derived from the Adult dataset in the UCI Machine Learning Repository. The task is to predict whether an individual's annual income exceeds \$50,000 based on census attributes. After preprocessing, each sample is represented by a sparse feature vector with $n=123$ features. The dataset contains $m = 32561$ training samples and $m_{\textrm{te}} = 16281$ test samples.
	\item \texttt{MNIST}\footnote{\url{http://yann.lecun.com/exdb/mnist/}}: Handwritten digit recognition. It consists of grayscale images of handwritten digits from $0$ to $9$, where each image has a resolution of 28 $\times$ 28 pixels. After vectorization, each sample is represented by a feature vector of dimension $n = 784$. The dataset contains $m = 60000$ training samples and $m_{\textrm{te}} = 10000$ test samples. We consider a binary classification task by grouping even digits as the positive class and odd digits as the negative class.
\end{itemize}

%To apply PG, we take $h(\x,u) = \phi(\x,u;A_{\textrm{train}},\mathbf{b}_{\textrm{train}}), g(\x)=\lambda(\|\x\|_1 - \mu \|\x\|)$, where $\lambda = 5\times10^{-2}$. We set $\alpha = 1$.
%
%To apply ManAcc, we take $f(\x,u) = \phi(\x,u;A_{\textrm{train}},\mathbf{b}_{\textrm{train}}), g(\x)=\lambda(\|\x\|_1 - \mu \|\x\|)$, where $5\times10^{-2}$. We set $\gamma = 1, m_1 = 10^{-4}, \theta = 0.5, \mathcal{M}_k = \{\z: \supp(\z) = \supp(\x^k)\}$. Moreover, to enhance the convergence, we also use $H^k + \eta I$ as the modified Hessian matrix at each iteration and we set $\eta = 10^{-3}$.
%
%To apply ASCN-PG, we take $h(\x,u) = \phi(\x,u;A_{\textrm{train}},\mathbf{b}_{\textrm{train}}), g(\x)=\lambda(\|\x\|_1 - \mu \|\x\|)$, where $\lambda = 5\times10^{-2}$. We set $\alpha = 1, c= 10^{-3},\eta = 10^{-10}, q= 0.95, \eta_1 = 0.1,\eta_2 = 0.9, \gamma_1 = 0.5,\gamma_2 = 2,\sigma_{\min} = 2c,\sigma_{\max} = 100\sigma_{\min}$ and $\varepsilon_k = \tau_1\|(\x^{k} - \y^{k-1})/\alpha\|^{\tau_2}$ with $\tau_1 = 0.85,\tau_2 = 0.99$.

The initial point is randomly generated from standard Gauss distribution.
For termination criterion (\ref{stoppping}), we set $\texttt{RTol} = 10^{-6},\texttt{CTol} = 1000$ and $\texttt{Maxit} = 50000$. 
To measure the classification accuracy, we  compute the testing accuracy (TEAcc) defined by:
$$\textrm{TEAcc} = \displaystyle{\frac{\sharp\{i : \textrm{sign}(\mathbf{a}_{\textrm{test},i}^{\top}\z + u) = b_{\textrm{test},i}\}}{m_{\text{te}}}}.
$$

%In model (\ref{prob:L1mL2_logistic}), we set $(\mu,\lambda)=(0.5,5e-2)$ and $(\mu,\lambda)=(1.0,5e-2)$  for \texttt{a9a};
%and  $(\mu,\lambda)=(0.5,2e-2)$ and $(\mu,\lambda)=(1.0,2e-2)$  for \texttt{MNIST} by hand-tuning.
In model \eqref{prob:L1mL2_logistic}, we set $(\mu,\lambda)=(0.5,5\times10^{-2})$ and $(\mu,\lambda)=(1.0,5\times10^{-2})$ for the \texttt{a9a} dataset, and $(\mu,\lambda)=(0.5,2\times10^{-2})$ and $(\mu,\lambda)=(1.0,2\times10^{-2})$ for the \texttt{MNIST} dataset. 
These parameters are selected via manual tuning.
The remaining algorithmic parameters are chosen as follows. 
For the \texttt{a9a} dataset with $(\mu,\lambda)=(0.5,5\times10^{-2})$, we adopt the same settings as in Section~\ref{sec:logistic}. 
For $(\mu,\lambda)=(1.0,5\times10^{-2})$, we use the same settings except that $\alpha=1$ for all algorithms. 
For the \texttt{MNIST} dataset, we set $\alpha=1$ for all algorithms across all tested cases.

\begin{figure}[htpb]
	\centering
	\begin{tabular}{cc}
		%\toprule
		%ODCT & Gauss \\
		%\midrule
		%\includegraphics[scale=.37]{APG_CN_figure/KKT_a9a_mu055.eps} 
		%& \includegraphics[scale=.37]{APG_CN_figure/KKT_MNIST_mu055.eps} \\
		%\includegraphics[scale=.32]{APG_CN_figure/KKT_a9a_mu1.eps} &
		%\includegraphics[scale=.32]{APG_CN_figure/KKT_MNIST_mu1.eps}
		\includegraphics[scale=.4]{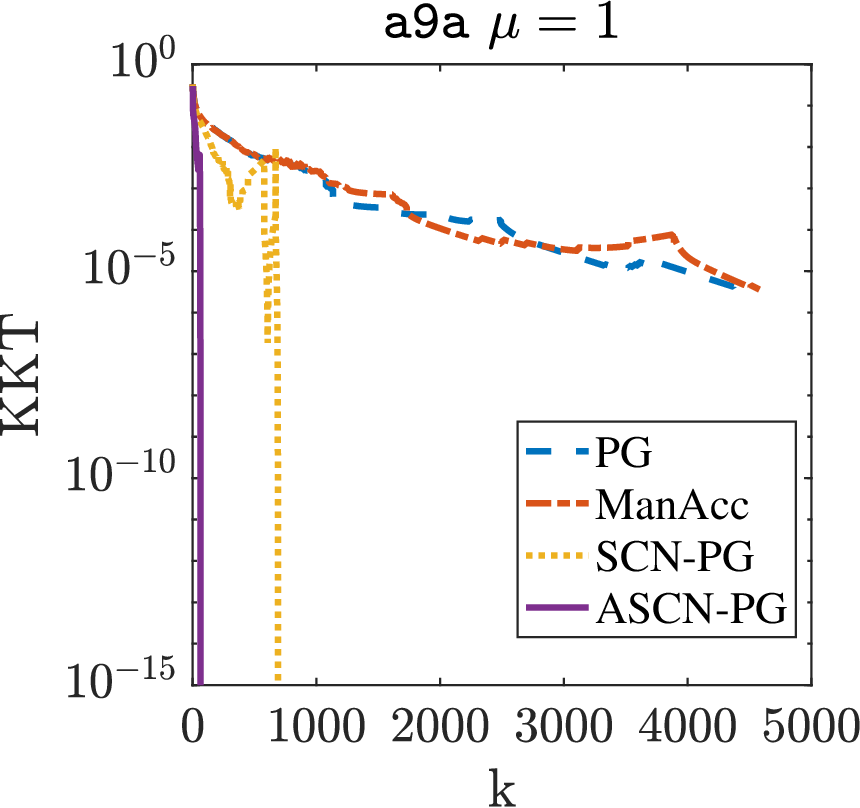} 
		& \includegraphics[scale=.4]{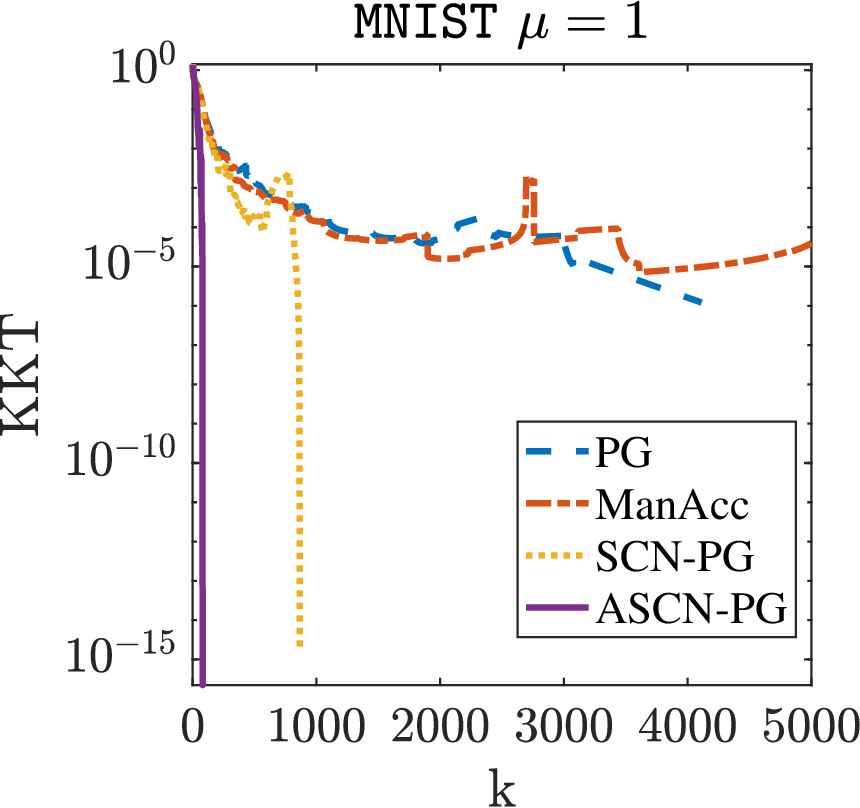}
		%\bottomrule
	\end{tabular}
	\caption{The Evolution of the KKT residual w.r.t. the iteration number.}
	\label{fig:KKT_realdata}
\end{figure}
Figure~\ref{fig:KKT_realdata} plots the KKT residual versus the iteration number for the four methods with $\mu=1$ on the \texttt{MNIST} and \texttt{a9a} datasets. 
It is evident that ASCN-PG and SCN-PG achieve a substantially faster reduction in the KKT residual than PG and ManAcc. 
In particular, the KKT residual of ASCN-PG decreases to the order of $10^{-16}$ within fewer than $100$ iterations, while SCN-PG attains a residual in the range $10^{-13}$--$10^{-15}$ after $400$--$500$ iterations. 
In contrast, PG and ManAcc stagnate around $10^{-6}$ even after $2000$--$4000$ iterations. 
These results indicate that ASCN-PG is more computationally efficient than SCN-PG in practice.
Table~\ref{tab:real_data_binary} reports the performance of all methods in terms of {Obj}, {NNZ}, {TEAcc}, {KKT}, {CPU}, and {Iter} for $\mu \in \{0.5,1\}$. 
All methods produce nearly identical values of {Obj}, {NNZ}, and {TEAcc}, whereas ASCN-PG achieves significantly smaller {KKT} residuals and markedly lower CPU time. 
On both the \texttt{a9a} and \texttt{MNIST} datasets, the CPU time of ASCN-PG is approximately $5\%$ and $0.5\%$ of that required by PG and ManAcc, respectively. 
Moreover, ASCN-PG consistently outperforms SCN-PG in terms of computational efficiency. 
This advantage is mainly due to the conservative estimate of $M(\x^k)$ in SCN-PG, which leads to smaller steps in the cubic Newton update.
Overall, these results demonstrate that SCN-PG and ASCN-PG attain high-accuracy solutions at a substantially lower computational cost than PG and ManAcc, highlighting the effectiveness of the proposed methods on both synthetic and real datasets.
\begin{table}[htpb]
	\centering
	\small
	\setlength{\tabcolsep}{6pt}
	\renewcommand{\arraystretch}{1.15}
	\caption{Results from PG, ManAcc, SCN-PG and ASCN-PG on two datasets.}
	\begin{tabular}{cc c c c c c r}
		\toprule
		$\texttt{a9a}$ & Algorithm & Obj & NNZ & TEAcc & KKT & CPU & Iter \\
		\midrule
		& PG & 3.87e-01 & 15.10 & 0.84 & 3.19e-05 & 2.23e+01 & 3858.80 \\
		$\mu = 0.5$ & ManAcc & 3.87e-01 & 15.00 & 0.84 & 3.81e-06 & 1.34e+02 & 2794.60 \\
		& SCN-PG & 3.87e-01 & 15.00 & 0.84 & 7.59e-15 & 6.66e+00 & 384.70 \\
		& ASCN-PG & 3.87e-01 & 15.00 & 0.84 & 2.66e-16 & 9.22e-01 & 53.40 \\
		%& PG & 3.87e-01 & 15.10 & 0.84 & 3.19e-05 & 2.22e+01 & 3858.80 \\
		%$\mu = 0.5$& ManAcc & 3.87e-01 & 15.00 & 0.84 & 3.81e-06 & 1.64e+02 & 2794.60 \\
		%& ASCN-PG & 3.87e-01 & 15.00 & 0.84 & 6.78e-17 & 9.91e-01 & 50.30 \\
		\toprule
		%			$\texttt{a9a}$  & Solver & Obj & NNZ & Acc & KKT & CPU & Iter \\
		%			\midrule
		& PG & 3.79e-01 & 14.10 & 0.84 & 3.55e-06 & 2.09e+01 & 3565.50 \\
		$\mu = 1$& ManAcc & 3.79e-01 & 14.00 & 0.84 & 3.98e-06 & 1.82e+02 & 3194.60 \\
		& SCN-PG & 3.79e-01 & 14.00 & 0.84 & 1.38e-15 & 6.98e+00 & 402.50 \\
		& ASCN-PG & 3.79e-01 & 14.00 & 0.84 & 1.31e-16 & 9.30e-01 & 54.60 \\
		%& PG & 3.79e-01 & 14.10 & 0.84 & 3.55e-06 & 2.14e+01 & 3565.50 \\
		%$\mu = 1$& ManAcc & 3.79e-01 & 14.00 & 0.84 & 3.98e-06 & 2.28e+02 & 3194.60 \\
		%& ASCN-PG & 3.79e-01 & 14.00 & 0.84 & 2.09e-16 & 1.00e+00 & 51.90 \\
		\toprule
		\texttt{MNIST} & Algorithm & Obj & NNZ & TEAcc & KKT & CPU & Iter \\
		\midrule
		& PG & 4.34e-01 & 57.30 & 0.87 & 5.04e-06 & 4.69e+02 & 4228.70 \\
		$\mu = 0.5$& ManAcc & 4.35e-01 & 59.30 & 0.87 & 1.94e-03 & 8.99e+02 & 3577.70 \\
		& SCN-PG & 4.34e-01 & 57.00 & 0.87 & 3.75e-12 & 1.34e+02 & 813.80 \\
		& ASCN-PG & 4.34e-01 & 57.10 & 0.87 & 2.53e-16 & 1.73e+01 & 92.40 \\
		%& PG & 4.34e-01 & 57.30 & 0.87 & 5.04e-06 & 4.68e+02 & 4228.70 \\
		%$\mu = 0.5$ & ManAcc & 4.35e-01 & 59.50 & 0.87 & 1.78e-03 & 9.17e+02 & 3477.20 \\
		%& ASCN-PG & 4.34e-01 & 57.60 & 0.87 & 5.31e-16 & 1.47e+01 & 65.90 \\
		\midrule
		& PG & 4.23e-01 & 41.60 & 0.87 & 1.20e-06 & 2.66e+02 & 2348.20 \\
		$\mu = 1$& ManAcc & 4.27e-01 & 48.80 & 0.87 & 6.96e-04 & 8.80e+02 & 2523.80 \\
		& SCN-PG & 4.23e-01 & 41.70 & 0.87 & 1.23e-13 & 8.60e+01 & 512.10 \\
		& ASCN-PG & 4.23e-01 & 42.00 & 0.87 & 1.98e-16 & 1.33e+01 & 74.00 \\
		%\bottomrule
		%
		%& PG & 4.23e-01 & 41.60 & 0.87 & 1.20e-06 & 2.62e+02 & 2348.20 \\
		%$\mu = 1$& ManAcc & 4.27e-01 & 48.80 & 0.87 & 6.82e-04 & 8.80e+02 & 2510.00 \\
		%& ASCN-PG & 4.23e-01 & 42.10 & 0.87 & 2.56e-16 & 1.38e+01 & 64.30 \\
		\bottomrule
	\end{tabular}
	\label{tab:real_data_binary}
\end{table}

\section{Conclusions}\label{sec:conclusion}

This paper studies a class of nonsmooth and nonconvex composite optimization problems and proposes an alternating minimization framework that integrates proximal-gradient steps with subspace cubic-regularized Newton updates. The proposed method achieves global convergence to a stationary point under the KL property.
By employing an adaptive thresholding strategy guided by the KL exponent, the framework unifies active manifold identification and second-order acceleration without requiring partly smoothness, prox-regularity, or nondegeneracy conditions. This design enables both finite identification and efficient local convergence.
The framework is flexible and admits several extensions. In particular, the subspace cubic Newton step can be replaced by other second-order acceleration schemes, such as proximal Newton-type methods, while preserving global convergence guarantees. In this work, we focus on a simple subspace cubic Newton strategy to highlight the core ideas underlying the proposed framework.

\vspace{0.2cm}
\noindent{\bf Funding} The work of M. Tao is supported by the Natural Science Foundation of China (No.
12471289,\ 12371318).

\noindent{\bf Data Availibility}
The datasets generated during and/or analysed during the current study are available from
the corresponding author on reasonable request.\\
%\vspace{0.2cm}
\noindent{\bf Declarations}
Conflict of interests. The authors have no relevant financial or non-financial interests to disclose.

\newpage
\appendix
\section{Estimation of the Locally Hessian Lipschitz Continuous Modulus}\label{app:LHLC}
%$$\bigtriangledown^3 f$$
\begin{theorem}
Let $F(\x) = \lambda\frac{\|\x\|_1}{\|\x\|_2} +\frac{1}{2}\|A\x-\mathbf{b}\|_{2}^{2}$, where $A\in\mathbb{R}^{m\times n},\mathbf{b}\in\mathbb{R}^{m}$ and $\lambda > 0$. Given a point $0\neq \x \in\mathbb{R}^n$ and define $\cal S = \supp(\x),\mathcal{M}_{\cal S} = \{\z: \supp(\z) = \supp(\x)\}$, then the following holds:
\begin{eqnarray*}
\mathrm{lip}\nabla^2_{\mathcal{S}}F({\x}) \le \frac{36\sqrt{\|\x\|_0}}{\|\x\|^3}.
\end{eqnarray*}
%thus for any $M(\x) > {36\sqrt{\|\x\|_0}}/{\|\x\|^3}$, there must be a $\delta > 0$, such that
%\begin{eqnarray*}
%\|\nabla^2_{\mathcal{S} }F(\x_1)-\nabla^2_{\mathcal{S} }F(\x_2)\|\le M(\x)\|\x_1-\x_2\|,
%\quad \forall \x_1,\x_2\in \cB(\x,\delta)\cap \mathcal{M}_{\cal S}.
%\end{eqnarray*}
\end{theorem}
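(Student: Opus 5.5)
On $\mathcal{M}_{\mathcal S}$ the quadratic term $\frac12\|A\x-\mathbf b\|^2$ has constant restricted Hessian $A_{\mathcal S}^\top A_{\mathcal S}$. It therefore contributes nothing to the Lipschitz modulus, and the whole task reduces to bounding the third derivative of the ratio term along $\mathcal{M}_{\mathcal S}$.

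Near $\x$, every $\z\in\mathcal{M}_{\mathcal S}$ has the same sign pattern on $\mathcal S$. Writing $\mathbf s=\sign(\x_{\mathcal S})$, we get $\|\z\|_1=\langle \mathbf s,\z_{\mathcal S}\rangle$, which is linear. So locally on $\mathcal{M}_{\mathcal S}$, identified with an open subset of $\R^{\mathcal S}$ with all coordinates nonzero, the term is the smooth function
\[
r(\mathbf u)=\lambda\,\langle \mathbf s,\mathbf u\rangle\,\|\mathbf u\|^{-1}, \qquad \mathbf u\in\R^{\sharp(\mathcal S)}.
\]
Hence $\mathrm{lip}\,\nabla^2_{\mathcal S}F(\x)=\|\nabla^3 r(\x_{\mathcal S})\|$, the operator norm of the symmetric trilinear form. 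We use that for a $C^3$ map the limsup in Definition~\ref{def:Lipschitz} equals the norm of the derivative of $\nabla^2 r$ at the point.

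To compute $\nabla^3 r$, write $r=\lambda\,\ell(\mathbf u)\,\psi(\mathbf u)$ with $\ell(\mathbf u)=\langle\mathbf s,\mathbf u\rangle$ and $\psi(\mathbf u)=\|\mathbf u\|^{-1}$. Because $\ell$ is linear, Leibniz gives
\[
\nabla^3 r[\h h,\h h,\h h]=\lambda\bigl(\ell(\mathbf u)\,\nabla^3\psi[\h h,\h h,\h h]+3\langle \mathbf s,\h h\rangle\,\nabla^2\psi[\h h,\h h]\bigr).
\]
It suffices to bound the cubic form on the diagonal, since the norm of a symmetric trilinear form equals its sup on the diagonal over unit vectors (Banach's theorem).

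The derivatives of $\psi$ are standard. With $\rho=\|\mathbf u\|$ and $\h e=\mathbf u/\rho$,
\[
\nabla^2\psi=\rho^{-3}\bigl(3\h e\h e^\top-I\bigr),\qquad
\nabla^3\psi[\h h,\h h,\h h]=\rho^{-4}\bigl(9\langle \h e,\h h\rangle\|\h h\|^2-15\langle\h e,\h h\rangle^3\bigr).
\]
For unit $\h h$ we have $|\nabla^2\psi[\h h,\h h]|\le 2\rho^{-3}$, and $|\nabla^3\psi[\h h,\h h,\h h]|\le \rho^{-4}\max_{|t|\le1}|9t-15t^3|$. That maximum is at most $6$ (at $t=1$; the interior critical point gives about $3.46$), so a crude bound of $24$ also suffices.

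Next use $|\ell(\mathbf u)|=\|\mathbf u\|_1\le\sqrt{\|\x\|_0}\,\rho$ and $|\langle\mathbf s,\h h\rangle|\le\|\mathbf s\|=\sqrt{\|\x\|_0}$. This yields
\[
|\nabla^3 r[\h h,\h h,\h h]|\le \lambda\sqrt{\|\x\|_0}\,\rho^{-3}\,(c_1+3\cdot 2)
\]
with $c_1\le 24$, hence at most $30\lambda\sqrt{\|\x\|_0}/\|\x\|^3$.

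The statement has no $\lambda$ in the bound, so either $\lambda\le 1$ is implicit (the experiments use $\lambda=10^{-2}$) or the constant $36$ should absorb $\lambda$. I would note this and present the bound as $36\lambda\sqrt{\|\x\|_0}/\|\x\|^3$, which is at most the claimed bound when $\lambda\le1$. The generous constant $36$ versus the sharp value is then just slack, e.g.\ bounding $|9t-15t^3|\le 24$ and $|3t^2-1|\le 2$ and adding a margin.

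The main obstacle is the bookkeeping in the third derivative of $\|\mathbf u\|^{-1}$ and the symmetric-norm reduction. A second point to check is the justification that $\mathrm{lip}$ relative to $\mathcal{M}_{\mathcal S}$ coincides with $\|\nabla^3 r(\x_{\mathcal S})\|$. This holds because $\mathcal{M}_{\mathcal S}$ is locally an open subset of the coordinate subspace $\R^{\mathcal S}$, on which $\|\cdot\|_1$ is linear, so $r$ is $C^\infty$ there and the mean value inequality applies.
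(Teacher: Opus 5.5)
Your proof is correct. It reaches the bound through a differently organized version of the same third-derivative computation. The paper takes $\mathcal S=[n]$ and invokes Rockafellar--Wets, Theorem~9.7, for $\mathrm{lip}\,\nabla^2_{\mathcal S}F(\x)=\|\nabla^3F(\x)\|$. It then writes out the matrix $\nabla^3F(\x)[\mathbf h]$ explicitly as five groups of terms in $\mathbf a=\sign(\x)$, $\alpha=\mathbf a^\top\mathbf h$, $\beta=\x^\top\mathbf h$ and $u=\mathbf a^\top\x$. Each group's spectral norm is bounded by the triangle inequality, using $\|\mathbf a\|=\sqrt{\|\x\|_0}$ and $|u|\le\sqrt{\|\x\|_0}\,\|\x\|$, and the coefficients $9+3+9+15$ give the $36$.

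You instead split the ratio into a linear factor times $\psi=\|\cdot\|^{-1}$, so Leibniz leaves only two terms, and then pass to the diagonal cubic form via Banach's theorem on symmetric multilinear forms. This is shorter and gives the sharper constant $12$, from $|9t-15t^3|\le 6$ and $|3t^2-1|\le 2$. The paper's route, by contrast, needs no polarization result, because it bounds $\sup_{\|\mathbf h\|=1}\|\nabla^3F(\x)[\mathbf h]\|$ directly. If you want to avoid Banach's theorem as well, apply the same two-term expansion to $[\mathbf h,\mathbf k,\mathbf l]$ with the crude bound $|\nabla^3\psi[\mathbf h,\mathbf k,\mathbf l]|\le 24\|\mathbf u\|^{-4}$; this gives $30$, still under $36$.

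Two minor points. First, the interior critical value of $|9t-15t^3|$ is $6/\sqrt5\approx 2.68$, not $3.46$; this is harmless, since the maximum $6$ is attained at $t=\pm1$. Second, your remark about $\lambda$ is right. The paper's displayed formula for $\nabla^3F(\x)[\mathbf h]$ silently drops the factor $\lambda$, so its computation actually proves $36\lambda\sqrt{\|\x\|_0}/\|\x\|^3$. The inequality as printed is therefore justified only for $\lambda\le 1$, since the left-hand side scales linearly in $\lambda$.
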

\begin{proof}
Without loss of generality, we may assume that $\mathcal{S} = [n]$. Note that the restriction of $F$ to $\cal S$ is actually three times continuously differentiable, therefore by \cite[Theorem 9.7]{rockafellar1998variational} it holds that:
\begin{eqnarray*}
\mathrm{lip}\nabla^2_{\mathcal{S}}F({\x})= \|\nabla^3 F(\x)\|,
\end{eqnarray*}
and we only need to derive an upper bound of $ \|\nabla^3 F(\x)\|$. Simple calculations yield that for any $\mathbf{h}\in\mathbb{R}^n$ it holds:
\begin{equation*}
\begin{aligned}
\nabla^3F(\x)[\mathbf{h}] = &\;\frac{3\beta}{r^5}\big(\mathbf{a} \x^\top + \x \mathbf{a}^\top + u I\big)
-\frac{1}{r^3}\big(\mathbf{a} \mathbf{h}^\top + \mathrm{h} \mathbf{a}^\top + \alpha I\big) \\
&\;+\frac{3\alpha}{r^5}\x \x^\top
+\frac{3u}{r^5}\big(\x \mathbf{h}^\top + \mathbf{h} \x^\top\big)
-\frac{15u\beta}{r^7}\x \x^\top,
\end{aligned}
\end{equation*}
where $\mathbf{a} = \sign(\x), r = \|\x\|, \alpha = \mathbf{a}^\top \mathbf{h}, \beta = \x^\top \mathbf{h}$ and $ u = \mathbf{a}^\top \x$. Then we have 
\begin{equation*}
\|\nabla^3F(\x)[\mathbf{h}]\| \le (\frac{9}{r^3}+ \frac{3}{r^3}+\frac{9}{r^3}+\frac{15}{r^3})\sqrt{\|\x\|_0}\|\mathbf{h}\| = \frac{36\sqrt{\|\x\|_0}}{\|\x\|^3}\|\mathbf{h}\|,
\end{equation*}
which implies the results.

\end{proof}

\begin{theorem}
Let $F(\x)=\frac{1}{m}\sum_{i=1}^m \log\!\left(1+e^{-b_i(\mathbf{a}_i^\top \z+u)}\right)
+\lambda(\|\z\|_1-\mu\|\z\|_2),$
where $\x=(\z,u)\in\mathbb{R}^{n}\times\mathbb{R}$, $\mathbf{a}_i\in\mathbb{R}^n$, $b_i\in\{\pm 1\}$, and $\lambda,\mu>0$.
Given a point $\x=(\z,u)$ with $\x\neq 0$, define $\cal S=\supp(\x), \mathcal{M}_{\cal S}:=\{\mathbf{v}:\supp(\mathbf{v})= \supp(\mathbf{\x})\}$ and $\widehat{\cal S}=\supp(\z)$. Then the following holds:
\begin{eqnarray*}
\mathrm{lip}\nabla^2_{\mathcal{S}}F({\x})
\le 
M_{\mathrm{log}}+\frac{6\lambda\mu}{\|\z\|^{2}},
\end{eqnarray*}
where
\[
M_{\mathrm{log}}
:=\frac{1}{6\sqrt{3}m}\sum_{i=1}^m (\|\mathbf{a}_{i,\widehat{\cal S}}\|_2^2+1)^{3/2}.
\]
%Thus, for any $M(\x) > M_{\mathrm{log}}+\frac{6\lambda\mu}{\|\z\|^{2}}$, there exists a $\delta>0$ such that
%\[
%\|\nabla^2_{\cal S}F(\x_1)-\nabla^2_{\cal S}F(\x_2)\|
%\le M(\x)\|\x_1-\x_2\|,
%\quad \forall \x_1,\x_2\in \cB(\x,\delta)\cap \mathcal{M}_{\cal S}.
%\]
\end{theorem}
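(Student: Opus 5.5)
Following the proof of the preceding appendix theorem, I would reduce the claim to a bound on a third derivative. On $\mathcal{M}_{\cal S}$ every coordinate in $\widehat{\cal S}$ keeps its sign, so $\|\z\|_1=\mathbf{a}^\top\z$ with $\mathbf{a}=\sign(\z)$ constant nearby. Hence the restriction of $F$ to the coordinates in $\cal S$ equals the smooth function
\[
\widetilde F(\x)=\phi(\x;A,\mathbf{b})+\lambda\,\mathbf{a}^\top\z-\lambda\mu\|\z\|_2,
\]
which is $C^3$ near $\x$ (we need $\z\neq 0$ so that $\|\z\|_2$ is smooth; the bound is vacuous otherwise). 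By \cite[Theorem 9.7]{rockafellar1998variational}, $\mathrm{lip}\nabla^2_{\cal S}F(\x)=\|\nabla^3_{\cal S}\widetilde F(\x)\|$, where the norm is the operator norm $\mathbf{h}\mapsto\nabla^3\widetilde F(\x)[\mathbf{h}]$ into matrices with spectral norm. The linear term $\lambda\mathbf{a}^\top\z$ contributes nothing. By the triangle inequality it then suffices to bound the logistic part by $M_{\log}$ and the term $-\lambda\mu\|\z\|$ by $6\lambda\mu/\|\z\|^2$.

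For the logistic part, write $\ell(t)=\log(1+e^{-t})$ and $\tilde{\mathbf{a}}_i=(\mathbf{a}_{i,\widehat{\cal S}},1)$, restricting to the coordinates in $\cal S$ and including the intercept when $u\neq0$. Using $b_i^2=1$, the third directional derivative is
\[
\frac1m\sum_i \ell'''(b_i\tilde{\mathbf{a}}_i^\top\x)\,b_i\,(\tilde{\mathbf{a}}_i^\top\mathbf{h})\,\tilde{\mathbf{a}}_i\tilde{\mathbf{a}}_i^\top .
\]
Its spectral norm is at most $\frac1m\sum_i|\ell'''|\,\|\tilde{\mathbf{a}}_i\|^3\|\mathbf{h}\|$, and $\|\tilde{\mathbf{a}}_i\|^3=(\|\mathbf{a}_{i,\widehat{\cal S}}\|^2+1)^{3/2}$. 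With $s=\sigma(t)$ we have $\ell'''(t)=s(1-s)(1-2s)$ up to sign, and a one-variable maximisation over $s\in[0,1]$ gives $\max|s(1-s)(1-2s)|=1/(6\sqrt3)$, attained at $s=\tfrac12\pm\tfrac{1}{2\sqrt3}$. This yields exactly $M_{\log}$.

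For the norm term, let $r=\|\z\|$. Then $\nabla^2\|\z\|=\frac1r(I-\z\z^\top/r^2)$. Differentiating in direction $\mathbf{h}$ with $\beta=\z^\top\mathbf{h}$ gives
\[
-\frac{\beta}{r^3}I-\frac{1}{r^3}(\mathbf{h}\z^\top+\z\mathbf{h}^\top)+\frac{3\beta}{r^5}\z\z^\top .
\]
Bounding each piece by $|\beta|\le r\|\mathbf{h}\|$ and $\|\mathbf{h}\z^\top\|\le r\|\mathbf{h}\|$ gives a total of $(1+2+3)\|\mathbf{h}\|/r^2=6\|\mathbf{h}\|/r^2$. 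Multiplying by $\lambda\mu$ gives $6\lambda\mu/\|\z\|^2$.

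The main care point is the logistic constant, specifically checking that $1/(6\sqrt3)$ is the true maximum of $|\ell'''|$. A secondary point is bookkeeping when $u=0$, where the intercept coordinate leaves $\cal S$; dropping it only decreases $\|\tilde{\mathbf{a}}_i\|$, so the stated bound still holds.
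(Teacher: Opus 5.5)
Your proposal is correct and follows essentially the same route as the paper: reduce $\mathrm{lip}\nabla^2_{\mathcal{S}}F(\x)$ to the operator norm of the third derivative of the smooth restriction (Rockafellar--Wets, Theorem~9.7), bound the logistic part via $\sup_t|\sigma(t)(1-\sigma(t))(1-2\sigma(t))|=1/(6\sqrt{3})$ together with $\|(\mathbf{a}_{i,\widehat{\mathcal{S}}},1)\|^3$, and bound the $-\lambda\mu\|\z\|$ term by $(1+2+3)\lambda\mu/\|\z\|^2$. Your explicit handling of the edge cases $u=0$ (dropping the intercept only shrinks the vectors) and $\z=\mathbf{0}$ (the bound is vacuous) simply makes precise the paper's ``without loss of generality'' step.
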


\begin{proof}
Without loss of generality, we may assume that $\widehat{\cal S}\neq\emptyset$ and $u \neq 0$.
Note that the restriction of $F$ to $\mathcal{M}_{\cal S}$ is three times continuously differentiable, therefore it suffices to bound $\|\nabla^3_{\cal S}F(\x)\|$.

First, for each $i\in[m]$ define the augmented vector $\mathbf{v}_i:=\binom{\mathbf{a}_{i,\widehat{\cal S}}}{1}\in\mathbb{R}^{s+1}$ and $\theta_i(\x):=b_i(\mathbf{a}_i^\top \z+u)=b_i \mathbf{v}_i^\top \binom{\z_{\widehat{\cal S}}}{u}.$
Let $\sigma(t)={1}/{(1+e^{-t})}$ and $w(t)=\sigma(t)(1-\sigma(t))$. Then a direct calculation yields for any $\mathbf{h}=(\mathbf{p},q)\in\mathbb{R}^{s}\times\mathbb{R}$,
\begin{equation*}
\begin{aligned}
\nabla^3_{\cal S} \log(1+e^{-\theta_i(\x)})[\mathbf{h}] &= w'(\theta_i(\x))\cdot b_i(\mathbf{v}_i^\top \mathbf{h})\, \mathbf{v}_i \mathbf{v}_i^\top .
\end{aligned}
\end{equation*}
Moreover, $\sup_{t\in\mathbb{R}}|w'(t)|=\sup_{t\in\mathbb{R}}|\sigma(t)(1-\sigma(t))(1-2\sigma(t))| = \frac{1}{6\sqrt{3}}$, thus
\begin{equation*}
\|\nabla^3_{\cal S} \log(1+e^{-\theta_i(\x)})[\mathbf{h}]\|
\le 
\frac{1}{6\sqrt{3}}\; |\mathbf{v}_i^\top \mathbf{h}|\, \|\mathbf{v}_i\|^2
\le
\frac{1}{6\sqrt{3}}\;\|\mathbf{v}_i\|^3 \|\mathbf{h}\|.
\end{equation*}
Therefore,
\begin{equation*}
\Big\|
\nabla^3_{\cal S}\Big(\frac{1}{m}\sum_{i=1}^m \log(1+e^{-\theta_i(\x)})\Big)[\mathbf{h}]
\Big\|
\le
L_{\mathrm{log}}\,\|\mathbf{h}\|.
\end{equation*}

Next, let $r:=\|\z_{\cal S}\|, \beta = \x^{\top}\mathbf{p}$. A direct calculation gives
\begin{equation*}
\begin{aligned}
(\nabla^3_{\cal S}\|\z_{\widehat{\cal S}}\|_2)[\mathbf{p}]= -\frac{\beta}{r^3}I +\frac{3\beta}{r^5}\z_{\cal S}\z_{\cal S}^\top -\frac{1}{r^3}(\z_{\cal S}\mathbf{p}^\top+\mathbf{p}\z_{\cal S}^\top).
\end{aligned}
\end{equation*}
Therefore, we obtain
\begin{equation*}
\|\nabla^3_{\widehat{\cal S}}(-\lambda\mu\|\z_{\widehat{\cal S}}\|)[\mathbf{h}]\|
\le
\frac{6\lambda\mu}{\|\z_{\widehat{\cal S}}\|^2}\,\|\mathbf{h}\|.
\end{equation*}
Combining the two parts and recalling that $\nabla^2_{\widehat{\cal S}}\|\z\|_1\equiv 0$ on $\mathcal{M}_{\widehat{\cal S}}$, we have
\[
\|\nabla^3_{{\cal S}}F(\x)[\mathbf{h}]\|
\le
\left(L_{\mathrm{log}}+\frac{6\lambda\mu}{\|\z_{\widehat{\cal S}}\|^2}\right)\|\mathbf{h}\|.
\]
which yields the desired bound.
\end{proof}

\end{document}